%
\documentclass[12pt, reqno]{amsart}
\usepackage{amsmath, amsthm, amscd, amsfonts, amssymb, graphicx, color}
\usepackage[bookmarksnumbered, colorlinks, plainpages]{hyperref}

\textheight 22.5truecm \textwidth 14.5truecm
\setlength{\oddsidemargin}{0.35in}\setlength{\evensidemargin}{0.35in}

\setlength{\topmargin}{-.5cm}

\usepackage[all]{xy}
\everymath{\displaystyle}

\title{Scaled-free objects II}

\author[W.\ Grilliette]{Will Grilliette}

\address{Department of Mathematics, Texas State University, 601 University Dr., San Marcos, TX  78666}
\email{w.b.grilliette@gmail.com}

\subjclass[2010]{Primary 46M99; Secondary 46B99, 46H99}

\keywords{Matrix-norm, free construction, left adjoint, free product}

\date{Received: xxxxxx; Revised: yyyyyy; Accepted: zzzzzz.}

\newtheorem{thm}{Theorem}[subsection]
\newtheorem{prop}[thm]{Proposition}
\newtheorem{cor}[thm]{Corollary}
\newtheorem{lem}[thm]{Lemma}

\newtheorem{thm2}{Theorem}[section]
\newtheorem{prop2}[thm2]{Proposition}
\newtheorem{lem2}[thm2]{Lemma}
\newtheorem{cor2}[thm2]{Corollary}

\theoremstyle{definition}
\newtheorem{defn}[thm]{Definition}
\newtheorem{defn2}[thm2]{Definition}

\theoremstyle{remark}
\newtheorem{ex}[thm]{Example}
\newtheorem{ex2}[thm2]{Example}

\newcommand{\alg}[1]{\mathcal{#1}}

\newcommand{\Set}{\mathbf{Set}}
\newcommand{\WSetC}{\mathbf{WSet}_{1}}
\newcommand{\WSetB}{\mathbf{WSet}_{\infty}}
\newcommand{\AWSetC}{\mathbf{AWSet}_{1}}
\newcommand{\AWSetB}{\mathbf{AWSet}_{\infty}}
\newcommand{\BanC}{\mathbf{Ban}_{1}}
\newcommand{\BanB}{\mathbf{Ban}_{\infty}}
\newcommand{\BanAlgC}{\mathbf{BanAlg}_{1}}
\newcommand{\BanAlgB}{\mathbf{BanAlg}_{\infty}}
\newcommand{\MBanC}{\mathbf{MBan}_{1}}
\newcommand{\MBanB}{\mathbf{MBan}_{\infty}}
\newcommand{\MBanAlgC}{\mathbf{MBanAlg}_{1}}
\newcommand{\MBanAlgB}{\mathbf{MBanAlg}_{\infty}}

\newcommand{\B}{\mathcal{B}}
\newcommand{\CB}{\mathcal{CB}}
\newcommand{\Th}{\mathcal{T}_{h}}

\DeclareMathOperator{\Ran}{ran}
\DeclareMathOperator{\Span}{span}

\DeclareMathOperator{\bnd}{bnd}
\DeclareMathOperator{\cbnd}{cbnd}
\DeclareMathOperator{\brn}{brn}

\DeclareMathOperator{\C}{C}
\DeclareMathOperator{\MC}{MC}
\DeclareMathOperator{\BanSp}{BanSp}
\DeclareMathOperator{\BanAlg}{BanAlg}
\DeclareMathOperator{\MBanSp}{MBanSp}
\DeclareMathOperator{\MBanAlg}{MBanAlg}
\DeclareMathOperator{\mA}{mA}
\DeclareMathOperator{\MA}{MA}
\DeclareMathOperator{\MIN}{MIN}
\DeclareMathOperator{\MAX}{MAX}
\DeclareMathOperator{\AMAX}{AMAX}

\begin{document}
\setcounter{page}{1}

\begin{abstract}
This work creates two categories of ``array-weighted sets'' for the purposes of constructing universal matrix-normed spaces and algebras.  These universal objects have the analogous universal property to the free vector space, lifting maps completely bounded on a generation set to a completely bounded linear map of the matrix-normed space.

Moreover, the universal matrix-normed algebra is used to prove the existence of a free product for matrix-normed algebras using algebraic methods.
\end{abstract}

\maketitle

\section{Introduction}

In \cite{grilliette1}, universal Banach spaces and algebras were constructed as left adjoints to forgetful functors to categories of weighted sets.  This paper considers a similar construction to build universal matricial Banach spaces and algebras from ``array-weighted'' sets.

Much like a weighted set is a Banach space stripped of its vector space structure, an ``array-weighted set'' will be a matricial Banach space stripped of its vector space structure, leaving a set with a net of nonnegative-valued functions.  Thus, the categories of array-weighted sets are a proposed replacement to the category of sets for construction of matrix-normed objects.  As such, one can then consider adapting algebraic constructions to matricial Banach algebras, such as generators and relations from \cite{blackadar1985,gerbracht,goodearl,loring2009,nassopoulos2008,pelletier,cat-top}.

Section \ref{preliminaries} establishes notation and existing results, which will be used throughout the paper for weighted sets and matrix-normed spaces.  Section \ref{awsets} develops the categories of array-weighted sets.  Section \ref{constructions} then builds the matrix-normed space for a given array-weighted set, showing several key examples to highlight the resulting structure.  Finally, Section \ref{mbanachalgebra} describes a theory of matricial Banach algebras generalizing the theory of operator algebras.  In particular, Section \ref{free-product} uses the universal matricial Banach algebra to show the existence of the free product of matricial Banach algebras using an algebraic proof.

The author would like to thank the referees of this paper for their comments and patience in its revision.  The author would also like to extend his thanks to Dr.\ Nathan Smith from the University of Texas at Tyler for the conversation which developed Example \ref{badness4}.

\section{Preliminaries}\label{preliminaries}

This section covers some previous results which are either motivating for the current work or needed for the current work's constructions.  In particular, the primary results regarding weighted sets and their constructions are reviewed from \cite{grilliette1} in Section \ref{weightedsets}.  Likewise, some foundational results into matrix-normed spaces are revisited from \cite{blecher1991,effros1988,paulsen,ruan1988} in Section \ref{matrix-normed} and \ref{matrix-normed-constructions}.  However, be aware that while \cite{grilliette1} considered Banach spaces for both real and complex fields, all vector spaces and algebras for the current work will be over $\mathbb{C}$ to be consistent with the literature of matrix-normed spaces.  Moreover, this section sets the notation that will be used throughout the current work.

\subsection{Weighted sets and their constructions}\label{weightedsets}

In \cite{grilliette1}, universal Banach spaces and algebras were constructed as left adjoints of forgetful functors to categories of ``weighted sets''.  The object ``weighted set'' has gone by different names in previous works:  ``bewertete Menge'' in \cite[Definition 1.1.7]{gerbracht}, ``crutched set'' in \cite[p.\ 14]{grilliette0}, and ``normed set'' in \cite[p.\ 7]{grandis2004}, where $\infty$ is allowed as a weight value.  This paper will use the following terminology, conventions, and categories.

\begin{defn}[{Weighted set conventions, \cite{grilliette1}}]
A \emph{weighted set} is a set $S$ equipped with a weight function $w_{S}:S\to[0,\infty)$.  Given two weighted sets $S$ and $T$, a function $\phi:S\to T$ is \emph{bounded} if there is $L\geq 0$ such that for all $s\in S$, $w_{T}\left(\phi(s)\right)\leq L\cdot w_{S}(s)$.  Let
\[
\bnd(\phi):=\inf\left\{L\in[0,\infty):w_{T}\left(\phi(s)\right)\leq L\cdot w_{S}(s)\forall s\in S\right\},
\]
the \emph{bound constant} of $\phi$.  If $\bnd(\phi)\leq 1$, $\phi$ is \emph{contractive}.  Let $\WSetC$ denote the category of weighted sets with contractive maps, and $\WSetB$ denote the category of weighted sets with bounded maps.
\end{defn}

A Banach space stripped of its vector space structure is a weighted set, and that relationship can be encapsulated in a forgetful functor.  The main content of \cite[Theorem 3.1.1]{grilliette1} is that this forgetful functor has a left adjoint, constructing a Banach space from a weighted set.

\begin{defn}[{Scaled-free Banach space, \cite[p.\ 281]{grilliette1}}]
For a weighted set $S$, let $\hat{S}:=S\setminus w_{S}^{-1}(0)$, all elements with nonzero weight.  Define the discrete measure $\mu_{S}:\mathcal{P}\left(\hat{S}\right)\to[0,\infty]$ by $\mu_{S}(T):=\sum_{s\in T}w_{S}(s)$.  The \emph{scaled-free Banach space} of $S$ is
\[
\BanSp(S):=\ell^{1}\left(\hat{S},\mu_{S}\right),
\]
a weighted $\ell^{1}$-space over $\mathbb{C}$.  Define $\zeta_{S}:S\to\BanSp(S)$ by
\[
\zeta_{S}(s):=\left\{\begin{array}{cc}
0,	&	s\not\in\hat{S},\\
\delta_{s},	&	s\in\hat{S},\\
\end{array}\right.
\]
where $\delta_{s}$ is the point mass at $s\in\hat{S}$.
\end{defn}

\begin{thm}[{Universal property of the scaled-free Banach space, \cite[Theorem 3.1.1]{grilliette1}}]
Let $\BanB$ be the category of Banach spaces with bounded linear maps, and $F_{\BanB}^{\WSetB}:\BanB\to\WSetB$ the forgetful functor stripping all linear structure.  For a weighted set $S$ and a Banach space $W$, consider a bounded function $\phi:S\to F_{\BanB}^{\WSetB}(W)$.  Then, there is a unique bounded linear map $\hat{\phi}:\BanSp(S)\to W$ such that $F_{\BanB}^{\WSetB}\left(\hat{\phi}\right)\circ\zeta_{S}=\phi$.  Moreover,
\[
\bnd(\phi)=\left\|\hat{\phi}\right\|_{\B\left(\BanSp(S),W\right)}.
\]
\end{thm}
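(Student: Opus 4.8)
The plan is to construct $\hat{\phi}$ explicitly on the dense subspace of finitely supported functions, estimate its norm, and extend by continuity. First I would set $\hat{\phi}(\delta_s) := \phi(s)$ for each $s \in \hat{S}$ and extend linearly: for a finitely supported $f = \sum_{s \in \hat{S}} f(s)\delta_s \in \BanSp(S)$, define $\hat{\phi}(f) := \sum_{s} f(s)\,\phi(s)$. The key estimate is that for such $f$,
\[
\left\|\hat{\phi}(f)\right\|_W \leq \sum_{s} |f(s)|\,\|\phi(s)\|_W \leq \bnd(\phi)\sum_{s} |f(s)|\,w_S(s) = \bnd(\phi)\,\|f\|_{\BanSp(S)},
\]
using $\|\phi(s)\|_W = w_W(\phi(s)) \leq \bnd(\phi)\cdot w_S(s)$ from the definition of the bound constant (recall $F_{\BanB}^{\WSetB}$ equips $W$ with the weight function $\|\cdot\|_W$). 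Since finitely supported functions are dense in $\ell^1(\hat S,\mu_S)$, this bounded linear map extends uniquely to all of $\BanSp(S)$ with $\|\hat\phi\| \leq \bnd(\phi)$. By construction $F_{\BanB}^{\WSetB}(\hat\phi)(\zeta_S(s)) = \hat\phi(\delta_s) = \phi(s)$ for $s \in \hat S$, and for $s \notin \hat S$ both sides are $0$ (the left side because $\zeta_S(s) = 0$, the right side because $w_S(s) = 0$ forces $w_W(\phi(s)) = 0$, i.e. $\phi(s) = 0$, as $\phi$ is bounded); so $F_{\BanB}^{\WSetB}(\hat\phi)\circ\zeta_S = \phi$.

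For uniqueness, any bounded linear $\psi:\BanSp(S)\to W$ with $F_{\BanB}^{\WSetB}(\psi)\circ\zeta_S = \phi$ must satisfy $\psi(\delta_s) = \phi(s)$ for all $s \in \hat S$; by linearity it agrees with $\hat\phi$ on the finitely supported functions, and by continuity on all of $\BanSp(S)$. For the reverse norm inequality $\bnd(\phi) \leq \|\hat\phi\|$, I would compute, for each $s \in \hat S$, that $w_W(\phi(s)) = \|\hat\phi(\delta_s)\|_W \leq \|\hat\phi\|\cdot\|\delta_s\|_{\BanSp(S)} = \|\hat\phi\|\cdot w_S(s)$, while for $s \notin \hat S$ the inequality $w_W(\phi(s)) \leq \|\hat\phi\|\cdot w_S(s)$ holds trivially since both sides vanish; hence $\|\hat\phi\|$ is one of the admissible constants $L$ in the infimum defining $\bnd(\phi)$, giving $\bnd(\phi) \leq \|\hat\phi\|$. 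Combined with the earlier estimate, $\bnd(\phi) = \|\hat\phi\|_{\B(\BanSp(S),W)}$.

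I do not expect a genuine obstacle here, as this is the standard universal-property-of-$\ell^1$ argument; the only points requiring care are the bookkeeping at the zero-weight elements (handled by noting boundedness of $\phi$ forces $\phi$ to kill them) and confirming that the weight on $F_{\BanB}^{\WSetB}(W)$ is literally the norm, so that "bounded" for $\phi$ and "bounded operator" for $\hat\phi$ refer to the same constant. The density of finitely supported functions in the weighted $\ell^1$-space $\ell^1(\hat S,\mu_S)$ is the one external fact invoked, and it is standard.
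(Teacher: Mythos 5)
Your proposal is correct, and since the paper only cites this result from \cite[Theorem 3.1.1]{grilliette1} rather than reproving it, the standard argument you give---define $\hat{\phi}$ on point masses, extend linearly over finitely supported functions, use the weighted $\ell^{1}$-estimate $\left\|\hat{\phi}(f)\right\|_{W}\leq\bnd(\phi)\|f\|_{\BanSp(S)}$, extend by density, and recover $\bnd(\phi)\leq\left\|\hat{\phi}\right\|$ by evaluating at $\delta_{s}$ with $\left\|\delta_{s}\right\|=w_{S}(s)$---is essentially the intended proof. Your care at the zero-weight elements (boundedness forces $\phi(s)=0$ there, matching $\zeta_{S}(s)=0$) is exactly the right bookkeeping.
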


Likewise, one would like to construct a Banach algebra from a weighted set in a similar fashion.  While the functor $\BanSp$ creates a linear structure, a multiplicative structure can be created using the construction of the Banach tensor algebra.

\begin{defn}[{Banach tensor algebra, \cite[p.\ 165]{leptin1969}}]
Let $\BanC$ be the category of Banach spaces with contractive linear maps.  For Banach space $V$, inductively define the \emph{projective tensor powers} of $V$ in the following way:
\[\begin{array}{ccc}
V^{\hat{\otimes}1}:=V,	&	V^{\hat{\otimes}(n+1)}:=\left(V^{\hat{\otimes}n}\right)\hat{\otimes}V	&	\forall n\in\mathbb{N},
\end{array}\]
where $\hat{\otimes}$ denotes the projective tensor product.  The \emph{Banach tensor algebra} of $V$ is
\[
\mathcal{T}(V):={\coprod_{n\in\mathbb{N}}}^{\BanC}V^{\hat{\otimes} n},
\]
the $\ell^{1}$-direct sum of these projective tensor powers, equipped with the usual tensor multiplication determined by the canonical isomorphism $V^{\hat{\otimes}m}\hat{\otimes} V^{\hat{\otimes}n}\to V^{\hat{\otimes}(m+n)}$.  Define $\iota_{V}:V\to\mathcal{T}(V)$ to be the inclusion map into the first tensor power of $V$ in $\mathcal{T}(V)$
\end{defn}

\begin{thm}[{Universal property of the Banach tensor algebra, \cite[Satz 1]{leptin1969}}]
Let $\BanAlgC$ be the category of Banach algebras with contractive algebra homomorphims, and $F_{\BanAlgC}^{\BanC}:\BanAlgC\to\BanC$ the forgetful functor stripping multiplicative structure.  For a Banach space $V$ and a Banach algebra $\alg{B}$, consider a contractive linear map $\phi:V\to F_{\BanAlgC}^{\BanC}(\alg{B})$.  Then, there is a unique contractive algebra homomorphism $\hat{\phi}:\mathcal{T}(V)\to\alg{B}$ such that $F_{\BanAlgC}^{\BanC}\left(\hat{\phi}\right)\circ\iota_{V}=\phi$.
\end{thm}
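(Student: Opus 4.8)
The plan is to verify the universal property directly by constructing $\hat\phi$ on each projective tensor power and assembling the pieces via the $\ell^1$-coproduct. First, given a contractive linear map $\phi:V\to F_{\BanAlgC}^{\BanC}(\alg B)$, I would define for each $n\in\mathbb N$ a contractive linear map $\phi_n:V^{\hat\otimes n}\to \alg B$. For $n=1$ this is just $\phi$ itself. For general $n$, the $n$-linear map $(v_1,\dots,v_n)\mapsto \phi(v_1)\phi(v_2)\cdots\phi(v_n)$ is bounded by $\|v_1\|\cdots\|v_n\|$ because $\phi$ is contractive and multiplication in $\alg B$ is contractive; by the universal property of the projective tensor product it factors through a contractive linear map $\phi_n:V^{\hat\otimes n}\to\alg B$. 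Then, since $\mathcal T(V)$ is the $\ell^1$-direct sum $\coprod^{\BanC}_{n\in\mathbb N}V^{\hat\otimes n}$, the universal property of that coproduct in $\BanC$ yields a unique contractive linear map $\hat\phi:\mathcal T(V)\to\alg B$ restricting to $\phi_n$ on each summand. In particular $\hat\phi\circ\iota_V=\phi_1=\phi$ on the nose, which gives $F_{\BanAlgC}^{\BanC}(\hat\phi)\circ\iota_V=\phi$.

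Next I would check that $\hat\phi$ is multiplicative. It suffices to check $\hat\phi(xy)=\hat\phi(x)\hat\phi(y)$ for $x\in V^{\hat\otimes m}$, $y\in V^{\hat\otimes n}$, since such elements span a dense subspace of $\mathcal T(V)$ and both sides are continuous and bilinear. On elementary tensors $x=v_1\otimes\cdots\otimes v_m$, $y=w_1\otimes\cdots\otimes w_n$ the tensor multiplication sends $(x,y)$ to $v_1\otimes\cdots\otimes v_m\otimes w_1\otimes\cdots\otimes w_n\in V^{\hat\otimes(m+n)}$, and applying $\phi_{m+n}$ gives $\phi(v_1)\cdots\phi(v_m)\phi(w_1)\cdots\phi(w_n)=\phi_m(x)\phi_n(y)$, so the identity holds on elementary tensors and extends by linearity and density. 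Hence $\hat\phi$ is a contractive algebra homomorphism, i.e.\ a morphism in $\BanAlgC$.

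Finally, for uniqueness, suppose $\psi:\mathcal T(V)\to\alg B$ is another contractive homomorphism with $\psi\circ\iota_V=\phi$. Then on $V^{\hat\otimes1}$ we have $\psi=\phi=\hat\phi$, and multiplicativity forces $\psi(v_1\otimes\cdots\otimes v_n)=\psi(\iota_V v_1)\cdots\psi(\iota_V v_n)=\phi(v_1)\cdots\phi(v_n)=\hat\phi(v_1\otimes\cdots\otimes v_n)$ on elementary tensors; continuity and density of the span of the tensor powers in $\mathcal T(V)$ then give $\psi=\hat\phi$.

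I expect the only genuine subtlety to be bookkeeping: confirming that the projective tensor norm is exactly what makes $(v_1,\dots,v_n)\mapsto\phi(v_1)\cdots\phi(v_n)$ factor through a \emph{contraction} (this is where contractivity of both $\phi$ and the multiplication on $\alg B$ is used), and confirming that the finite sums of elementary tensors across the tensor powers are dense in the $\ell^1$-direct sum $\mathcal T(V)$ so that the density arguments in the multiplicativity and uniqueness steps are valid. Neither step is deep, but they are the places where the construction could fail if the norms were chosen differently.
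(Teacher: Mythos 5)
Your proposal is correct, and it follows essentially the same route the paper itself takes for the matricial analogue (Theorem \ref{universal-prop-haagerup}): build contractive maps $\phi_{n}$ on the tensor powers from the universal property of the (projective, resp.\ Haagerup) tensor product, assemble them through the $\ell^{1}$-direct sum coproduct, and verify multiplicativity and uniqueness on elementary tensors using linearity, continuity, and density. The only cosmetic difference is that the paper constructs $\phi_{n+1}$ inductively from the bilinear map $(w,v)\mapsto\phi_{n}(w)\cdot\phi(v)$, matching its inductive definition of $V^{\hat{\otimes}(n+1)}$, whereas you invoke the $n$-linear universal property directly; these are equivalent via associativity of $\hat{\otimes}$.
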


Composing the left adjoints $\BanSp$ and $\mathcal{T}$ creates a new left adjoint, $\BanAlg:=\mathcal{T}\circ\BanSp$, with the following universal property.

\begin{thm}[{Universal property of the scaled-free Banach algebra, \cite[Theorem 3.2.4]{grilliette1}}]
Let $F_{\BanAlgC}^{\WSetC}:\BanAlgC\to\WSetC$ be the forgetful functor stripping all algebraic structure.  For a weighted set $S$ and a Banach algebra $\alg{B}$, consider a contractive map $\phi:S\to F_{\BanAlgC}^{\WSetC}(\alg{B})$.  Then, there is a unique contractive algebra homomorphism $\hat{\phi}:\BanAlg(S)\to\alg{B}$ such that $F_{\BanAlgC}^{\WSetC}\left(\hat{\phi}\right)\circ\iota_{\BanSp(S)}\circ\zeta_{S}=\phi$.
\end{thm}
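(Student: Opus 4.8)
The plan is to obtain $\hat{\phi}$ by splicing together the two universal properties already in hand, exploiting the definition $\BanAlg=\mathcal{T}\circ\BanSp$. First I would record that the forgetful functor factors as $F_{\BanAlgC}^{\WSetC}=F_{\BanC}^{\WSetC}\circ F_{\BanAlgC}^{\BanC}$: stripping all algebraic structure from a Banach algebra is the same as first stripping the multiplication to land in $\BanC$ and then stripping the linear structure. Consequently the given contractive map $\phi:S\to F_{\BanAlgC}^{\WSetC}(\alg{B})$ may be regarded as a contractive map $\phi:S\to F_{\BanC}^{\WSetC}\left(F_{\BanAlgC}^{\BanC}(\alg{B})\right)$ into the weighted set underlying the Banach space $F_{\BanAlgC}^{\BanC}(\alg{B})$.

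Next I would invoke the universal property of the scaled-free Banach space. Although it is stated for $\WSetB$ and $\BanB$, the norm identity $\bnd(\phi)=\|\hat{\phi}\|$ shows that a contractive $\phi$ produces a contractive linear map; so there is a unique contractive linear map $\tilde{\phi}:\BanSp(S)\to F_{\BanAlgC}^{\BanC}(\alg{B})$ with $F_{\BanC}^{\WSetC}\left(\tilde{\phi}\right)\circ\zeta_{S}=\phi$. Now $\tilde{\phi}$ is precisely the sort of input required by the universal property of the Banach tensor algebra, applied to the Banach space $V=\BanSp(S)$ and the Banach algebra $\alg{B}$; it yields a unique contractive algebra homomorphism $\hat{\phi}:\mathcal{T}(\BanSp(S))=\BanAlg(S)\to\alg{B}$ with $F_{\BanAlgC}^{\BanC}\left(\hat{\phi}\right)\circ\iota_{\BanSp(S)}=\tilde{\phi}$. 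Applying $F_{\BanC}^{\WSetC}$ to this identity, using functoriality and the factorization above, and precomposing with $\zeta_{S}$ gives
\[
F_{\BanAlgC}^{\WSetC}\left(\hat{\phi}\right)\circ\iota_{\BanSp(S)}\circ\zeta_{S}=F_{\BanC}^{\WSetC}\left(\tilde{\phi}\right)\circ\zeta_{S}=\phi,
\]
which is the required factorization.

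For uniqueness I would run the two steps in reverse. If $\psi:\BanAlg(S)\to\alg{B}$ is any contractive algebra homomorphism with $F_{\BanAlgC}^{\WSetC}(\psi)\circ\iota_{\BanSp(S)}\circ\zeta_{S}=\phi$, then $F_{\BanAlgC}^{\BanC}(\psi)\circ\iota_{\BanSp(S)}$ is a contractive linear map out of $\BanSp(S)$ whose composite with $\zeta_{S}$, after forgetting, equals $\phi$; the uniqueness clause of the universal property of $\BanSp$ then forces $F_{\BanAlgC}^{\BanC}(\psi)\circ\iota_{\BanSp(S)}=\tilde{\phi}$, and the uniqueness clause of the universal property of $\mathcal{T}$ forces $\psi=\hat{\phi}$.

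I do not anticipate a real obstacle: the argument is entirely formal, being the standard fact that a composite of left adjoints is again a left adjoint, written out concretely. The only points demanding a moment's care are the bookkeeping with the factorization $F_{\BanAlgC}^{\WSetC}=F_{\BanC}^{\WSetC}\circ F_{\BanAlgC}^{\BanC}$ together with the harmless abuse of notation by which $\iota_{\BanSp(S)}$ and $\zeta_{S}$ are composed across forgetful functors, and the use of the identity $\bnd(\phi)=\|\hat{\phi}\|$ to pass between the bounded and contractive forms of the scaled-free Banach space property.
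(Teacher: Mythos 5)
Your proposal is correct and is essentially the paper's own argument: the statement is quoted from \cite{grilliette1} and justified there (and implicitly here) by the fact that $\BanAlg=\mathcal{T}\circ\BanSp$ is a composite of left adjoints, so one splices the universal property of $\BanSp$ with that of the Banach tensor algebra exactly as you do, including the use of $\bnd(\phi)=\|\tilde{\phi}\|$ to stay in the contractive setting and the two-step uniqueness argument. Indeed, the paper's proof of the matricial analogue (the universal property of $\MBanAlg$, Theorem \ref{mbanalg-prop}) follows the same factor-through-the-forgetful-functors pattern you describe.
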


\subsection{Matricial Banach spaces and important examples}\label{matrix-normed}
The central goal of this paper is to adapt the constructions of the previous section to ``matricial Banach spaces''.  The core idea of this structure is a Banach space equipped with norms on the matrices over the space that have a boundedness condition with the action of the scalar matrices.

However, since this idea is to be abstracted in Section \ref{awsets}, the presentation here will be categorical and functorial to keep notation consistent between general sets and vector spaces.  Fundamentally, an $m\times n$-matrix of elements is a function from a cartesian product into the appropriate target set, which is described below.

\begin{defn}[The functor $M_{m,n}$]\label{matrix-notation}
For $n\in\mathbb{N}$, let $[n]:=\{1,\ldots,n\}$, the set of the first $n$ natural numbers.  Letting $\Set$ denote the category of sets, define
\[
M_{m,n}(-):=\Set([m]\times[n],-)
\]
for $m,n\in\mathbb{N}$, a covariant hom-functor from $\Set$ to itself.  For a set $S$, $M_{m,n}(S)$ is the set of all functions from $[m]\times[n]$ to $S$.  An element $A$ of $M_{m,n}(S)$ is an \emph{array}, or \emph{matrix}, with entries from $S$.  For $1\leq j\leq m$ and $1\leq k\leq n$, the \emph{$j,k$-entry} of $A$ will be denoted with function notation as $A(j,k)$.

Moreover,given sets $S$ and $T$, the action of $M_{m,n}$ on a function $\phi:S\to T$ gives a function $M_{m,n}(\phi):M_{m,n}(S)\to M_{m,n}(T)$ defined entrywise by
\[
M_{m,n}(\phi)(A)(j,k):=\phi(A(j,k))
\]
for $1\leq j\leq m$ and $1\leq k\leq n$, simply applying $\phi$ to all entries of $A$.  This action is precisely the \emph{ampliation} of maps found in \cite{blecher1991,effros1988,paulsen,ruan1988}.
\end{defn}

If $S$ already has existing algebraic structure, said structure can be extended to $M_{m,n}(S)$.  Below are the conventions taken for this paper for vector spaces.

\begin{defn}[Matrix conventions, vector spaces]
For a vector space $V$ and $m,n\in\mathbb{N}$, $M_{m,n}(V)$ is equipped with the usual pointwise addition and scalar multiplication.  The set of scalar matrices will be distinguished by $\mathbb{M}_{m,n}:=M_{m,n}(\mathbb{C})$.  For $k,l\in\mathbb{N}$, the actions $\mathbb{M}_{k,m}\times M_{m,n}(V)\to M_{k,n}(V)$ and $M_{m,n}(V)\times\mathbb{M}_{n,l}\to M_{m,l}(V)$ will be by matrix multiplication.
\end{defn}

At last, the definition of a matrix-normed space can be given.

\begin{defn}[{Matrix-normed spaces, \cite[p.\ 264]{blecher1991}}]
For $n\in\mathbb{N}$, equip $\mathbb{C}^{n}$ with the Euclidean norm.  For $m,n\in\mathbb{N}$, let $\mathbb{M}_{m,n}$ be equipped with the operator norm from $\mathbb{C}^{n}$ to $\mathbb{C}^{m}$.  For a vector space $V$, a \emph{matrix-norm} on $V$ is a net $\left(\|\cdot\|_{V,m,n}\right)_{m,n\in\mathbb{N}}$ such that
\begin{enumerate}
\item $\|\cdot\|_{V,m,n}$ is a norm on $M_{m,n}(V)$,
\item $\|ABC\|_{V,k,l}\leq\|A\|_{\mathbb{M}_{k,m}}\|B\|_{V,m,n}\|C\|_{\mathbb{M}_{n,l}}$
\end{enumerate}
for all $k,m,n,l\in\mathbb{N}$, $A\in\mathbb{M}_{k,m}$, $B\in M_{m,n}(V)$, $C\in\mathbb{M}_{n,l}$.  A vector space $V$ equipped with such a matrix-norm is a \emph{matrix-normed space}.
\end{defn}

Be aware that \cite[p.\ 246]{effros1988} and \cite[p.\ 1]{ruan1988} use an alternate set of axioms only involving square matrices.  However, by \cite[Exercises 13.1-2]{paulsen}, the two are interchangeable.

For maps between matrix-normed spaces, the linear maps between each level of matrices are required to be bounded by a uniform constant.  The standard definition given in \cite{blecher1991,effros1988,paulsen,ruan1988} uses square matrices.  Since this paper will be handling specifically nonsquare matrices in Section \ref{awsets}, an equivalent formulation will be used, which was referenced in \cite[p.\ 246]{effros1988}.

\begin{defn}[Completely bounded maps]
Given matrix-normed spaces $V$ and $W$, a linear map $\phi:V\to W$ is \emph{completely bounded} if
\begin{enumerate}
\item $M_{m,n}(\phi)$ is bounded for all $m,n\in\mathbb{N}$,
\item $\|\phi\|_{\CB(V,W)}:=\sup\left\{\left\|M_{m,n}(\phi)\right\|_{\B\left(M_{m,n}(V),M_{m,n}(W)\right)}:m,n\in\mathbb{N}\right\}<\infty$.
\end{enumerate}
The map $\phi$ is \emph{completely contractive} if $\|\phi\|_{\CB(V,W)}\leq 1$.
\end{defn}

Notably, a matrix-normed space $V$ is a normed space when stripped of all its matrix-norms, except for the norm on $M_{1,1}(V)\cong V$.  To compare matrix-normed spaces with Banach spaces, the current work will require that this underlying normed space be complete.  As noted in \cite[p.\ 246]{effros1988}, the underlying normed space is complete if and only if all the matrix levels above it are as well.  Hence, the following definitions are made unambiguously.

\begin{defn}[Matricial Banach space]
A complete matrix-normed space is a \emph{matricial Banach space}.  Let $\MBanB$ be the category of matricial Banach spaces with completely bounded linear maps, and $\MBanC$ be the category of matricial Banach spaces with completely contractive linear maps.
\end{defn}

For a Banach space, one would like to extend its existing norm to a matrix norm.  However, such an extension is not unique, as shown in the following standard constructions.

\begin{defn}[{Minimal operator space structure, \cite[Theorem 2.1]{effros1988}}]\label{MIN-definition}
Given a Banach space $V$, let $\MIN(V)$ be $V$ equipped with the matrix-norm given by
\[
\|A\|_{\MIN(V),m,n}:=\sup\left\{\left\|M_{m,n}(\phi)(A)\right\|_{\mathbb{M}_{m,n}}:\phi\in V^{*},\|\phi\|_{V^{*}}\leq 1\right\},
\]
the injective tensor norm on $V\otimes\mathbb{M}_{m,n}$.
\end{defn}

\begin{defn}[{Maximal operator space structure, \cite[Example 2.4]{blecher1991}}]
For a Hilbert space $H$ and $n\in\mathbb{N}$, let $H^{(n)}$ denote the $\ell^{2}$-direct sum of $H$ with itself $n$ times.  Given a Banach space $V$, let $\MAX(V)$ be $V$ equipped with the matrix-norm given by
\[
\|A\|_{\MAX(V),m,n}:=\sup\left\{\left\|M_{m,n}(\phi)(A)\right\|_{\B\left(H^{(n)},H^{(m)}\right)}:\begin{array}{c}H\textrm{ is a Hilbert space},\\ \phi:V\to\B(H)\textrm{ linear},\\ \|\phi\|_{\B(V,\B(H))}\leq 1\end{array}\right\}.
\]
\end{defn}

\begin{defn}[{Absolute maximum matrix-norm structure, \cite[Theorem 2.1]{effros1988}}]\label{AMAX-definition}
Recall that $\mathbb{M}_{n,m}^{*}$ can be identified as $\mathbb{M}_{m,n}$ equipped with the trace norm.  Given a Banach space $V$, let $\AMAX(V)$ be $V$ equipped with the matrix-norm given by
\[
\|A\|_{\AMAX(V),m,n}:=\inf\left\{\sum_{l=1}^{p}\left\|v_{l}\right\|_{V}\left\|C_{l}\right\|_{\mathbb{M}_{n,m}^{*}}:A=\sum_{l=1}^{p}v_{l}\otimes C_{l}\right\},
\]
the projective tensor norm on $V\otimes\mathbb{M}_{n,m}^{*}$
\end{defn}

Please note that each of these constructions is distinct from the others.

\begin{ex}[Distinction between $\MIN$, $\MAX$, and $\AMAX$]
Observe that
\[
\left\|\begin{bmatrix}1 & 0\\ 0 & 1\end{bmatrix}\right\|_{\MIN(\mathbb{C}),2,2}
=\left\|\begin{bmatrix}1 & 0\\ 0 & 1\end{bmatrix}\right\|_{\MAX(\mathbb{C}),2,2}
=1
\neq 2
=\left\|\begin{bmatrix}1 & 0\\ 0 & 1\end{bmatrix}\right\|_{\AMAX(\mathbb{C}),2,2},
\]
so $\AMAX(\mathbb{C})\not\cong_{\MBanC}\MAX(\mathbb{C})$ and $\AMAX(\mathbb{C})\not\cong_{\MBanC}\MIN(\mathbb{C})$.  By \cite[Proposition 2.7]{paulsen1992}, $\MAX\left(\ell^{2}([3])\right)\not\cong_{\MBanC}\MIN\left(\ell^{2}([3])\right)$.
\end{ex}

Of all of the ways a norm can be extended to a matrix-norm, the two most important for the purposes of this paper are $\MIN$ and $\AMAX$ as they are the least and greatest matrix-norm, respectively, which extend the original Banach space norm.  The minimality of $\MIN$ is well-known as stated in \cite[Example 2.3]{blecher1991} and yields the following universal property.

\begin{thm}[Universal property of $\MIN$]\label{MIN-prop}
Let $F_{\MBanB}^{\BanB}:\MBanB\to\BanB$ be the forgetful functor stripping all matrix-norm structure except the underlying norm.  For a Banach space $V$ and a matricial Banach space $W$, consider a bounded linear map $\varphi:F_{\MBanB}^{\BanB}(W)\to V$.  Then, there is a unique completely bounded linear map $\hat{\varphi}:W\to\MIN(V)$ such that $F_{\MBanB}^{\BanB}\left(\hat{\varphi}\right)=\varphi$.  Moreover,
\[
\left\|\hat{\varphi}\right\|_{\CB(W,\MIN(V))}=\|\varphi\|_{\B\left(F_{\MBanB}^{\BanB}(W),V\right)}.
\]
\end{thm}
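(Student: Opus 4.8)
The plan is to verify the universal property directly from the definitions of $\MIN$ and complete boundedness, using the fact that $\MIN(V)$ has the same underlying Banach space as $V$. First I would address uniqueness: since $F_{\MBanB}^{\BanB}$ only forgets the matrix-norm structure above the first level and leaves the underlying linear space untouched, any two completely bounded maps $\hat{\varphi}_1, \hat{\varphi}_2 : W \to \MIN(V)$ with $F_{\MBanB}^{\BanB}(\hat{\varphi}_1) = \varphi = F_{\MBanB}^{\BanB}(\hat{\varphi}_2)$ agree as linear maps $W \to V$, hence agree as maps $W \to \MIN(V)$. So uniqueness is immediate, and the content is existence together with the norm equality.

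For existence, take $\hat{\varphi} := \varphi$ as a linear map; I need only check it is completely bounded as a map $W \to \MIN(V)$, with the claimed cb-norm. The key computation is to estimate $\|M_{m,n}(\varphi)(A)\|_{\MIN(V),m,n}$ for $A \in M_{m,n}(W)$. Using the defining formula in Definition \ref{MIN-definition}, this equals $\sup\{\|M_{m,n}(\psi)(M_{m,n}(\varphi)(A))\|_{\mathbb{M}_{m,n}} : \psi \in V^{*}, \|\psi\|_{V^{*}} \leq 1\}$. The crucial observation is that $M_{m,n}(\psi) \circ M_{m,n}(\varphi) = M_{m,n}(\psi \circ \varphi)$ by functoriality of $M_{m,n}$, and $\psi \circ \varphi \in W^{*}$ with $\|\psi \circ \varphi\|_{W^{*}} \leq \|\varphi\|_{\B(F_{\MBanB}^{\BanB}(W),V)}$. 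Now I would invoke the minimality characterization of $\MIN$ on the matricial Banach space $W$ itself (stated in \cite[Example 2.3]{blecher1991}): the matrix-norm on $W$ dominates that of $\MIN$ of its underlying Banach space, so for a functional $\theta \in W^{*}$ one has $\|M_{m,n}(\theta)(A)\|_{\mathbb{M}_{m,n}} \leq \|\theta\|_{W^{*}} \|A\|_{W,m,n}$. Applying this with $\theta = \psi \circ \varphi$ and taking the supremum over $\|\psi\|_{V^{*}} \leq 1$ yields $\|M_{m,n}(\varphi)(A)\|_{\MIN(V),m,n} \leq \|\varphi\|_{\B(F_{\MBanB}^{\BanB}(W),V)} \|A\|_{W,m,n}$, which gives both complete boundedness and the inequality $\|\hat{\varphi}\|_{\CB(W,\MIN(V))} \leq \|\varphi\|_{\B(F_{\MBanB}^{\BanB}(W),V)}$.

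The reverse inequality is easier: restricting to the $1,1$-level, $M_{1,1}(\hat{\varphi})$ is the map $\varphi$ itself (up to the canonical identifications $M_{1,1}(W) \cong W$, $M_{1,1}(\MIN(V)) \cong V$), since $\MIN(V)$ extends the norm of $V$. Hence $\|\varphi\|_{\B(F_{\MBanB}^{\BanB}(W),V)} = \|M_{1,1}(\hat{\varphi})\| \leq \|\hat{\varphi}\|_{\CB(W,\MIN(V))}$, completing the norm equality. Finally I would note that $\hat{\varphi}$, being the same linear map as $\varphi$ on the common underlying space, automatically satisfies $F_{\MBanB}^{\BanB}(\hat{\varphi}) = \varphi$.

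I expect the main obstacle to be purely bookkeeping rather than conceptual: carefully tracking which space each norm is computed in (the matrix-norm of $W$ versus that of $\MIN(V)$), keeping the functoriality identity $M_{m,n}(\psi) \circ M_{m,n}(\varphi) = M_{m,n}(\psi\circ\varphi)$ explicit, and making sure the minimality property of $\MIN$ is being applied to $W$'s intrinsic matrix-norm (not circularly to the object being constructed). One should also confirm that $\MIN(V)$ is genuinely a matricial Banach space, i.e.\ complete — but this is immediate since its underlying normed space is $V$, which is complete, and completeness of the first level is equivalent to completeness of all levels as noted before Definition \ref{MIN-definition}.
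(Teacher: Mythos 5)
Your proof is correct and is essentially the argument the paper intends: the paper omits the proof, pointing to the same logic as Paulsen's Exercise 14.1 adapted from operator spaces to general matricial Banach spaces, which is exactly what you carry out (take $\hat{\varphi}=\varphi$, use functoriality of $M_{m,n}$, apply the minimality of $\MIN$ to the intrinsic matrix-norm of $W$, and recover the reverse inequality at the $1,1$-level). The only step worth spelling out is that the minimality estimate $\left\|M_{m,n}(\theta)(A)\right\|_{\mathbb{M}_{m,n}}\leq\|\theta\|_{W^{*}}\|A\|_{W,m,n}$ indeed holds for general matrix-normed spaces, not just operator spaces, by the rectangular axiom: for unit vectors $x\in\mathbb{C}^{n}$, $y\in\mathbb{C}^{m}$ one has $\left|y^{*}M_{m,n}(\theta)(A)x\right|=\left|\theta\left(y^{*}Ax\right)\right|\leq\|\theta\|_{W^{*}}\|A\|_{W,m,n}$.
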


The proof of the above theorem arises from the same logic as \cite[Exercise 14.1]{paulsen}, except using a general matricial Banach space instead of an operator space.  On the other hand, the maximality of $\AMAX$ will be proven as the author has no knowledge of its proof in the literature.

\begin{lem}[Maximality of $\AMAX$]
Let $W$ be a matricial Banach space.  For $m,n\in\mathbb{N}$ and $A\in M_{m,n}(W)$,
\[
\|A\|_{W,m,n}\leq\|A\|_{\AMAX\left(F_{\MBanB}^{\BanB}(W)\right),m,n}.
\]
\end{lem}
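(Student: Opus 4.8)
The plan is to use the fact that $\|\cdot\|_{\AMAX(V),m,n}$ is a \emph{projective} tensor norm, so by the triangle inequality for the norm $\|\cdot\|_{W,m,n}$ it suffices to estimate a single elementary tensor $v\otimes C$ with $v\in F_{\MBanB}^{\BanB}(W)$ and $C\in\mathbb{M}_{n,m}^{*}$, and then take an infimum over all representations $A=\sum_{l=1}^{p}v_{l}\otimes C_{l}$. Throughout I identify $M_{m,n}(W)$ with $W\otimes\mathbb{M}_{m,n}$ so that $v\otimes C$ is the array whose $j,k$-entry is $C(j,k)\,v$, and $\mathbb{M}_{n,m}^{*}$ with $\mathbb{M}_{m,n}$ carrying the trace norm, as in Definition \ref{AMAX-definition}.

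The key step is to reduce the elementary-tensor estimate to axiom (2) of a matrix-norm via a rank-one factorization. Take a singular value decomposition $C=\sum_{i=1}^{r}\sigma_{i}u_{i}w_{i}^{*}$ with $\sigma_{i}>0$, $\{u_{i}\}$ orthonormal in $\mathbb{C}^{m}$, $\{w_{i}\}$ orthonormal in $\mathbb{C}^{n}$, so that $\|C\|_{\mathbb{M}_{n,m}^{*}}=\sum_{i=1}^{r}\sigma_{i}$. Putting $R_{i}:=\sqrt{\sigma_{i}}\,u_{i}\in\mathbb{M}_{m,1}$ and $S_{i}:=\sqrt{\sigma_{i}}\,w_{i}^{*}\in\mathbb{M}_{1,n}$, an entrywise check gives $v\otimes\left(\sigma_{i}u_{i}w_{i}^{*}\right)=R_{i}\,[v]\,S_{i}$, where $[v]$ denotes $v$ viewed as an element of $M_{1,1}(W)$. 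Since the operator norm of a column (resp.\ row) vector is its Euclidean norm, $\|R_{i}\|_{\mathbb{M}_{m,1}}=\|S_{i}\|_{\mathbb{M}_{1,n}}=\sqrt{\sigma_{i}}$, while $\|[v]\|_{W,1,1}=\|v\|_{F_{\MBanB}^{\BanB}(W)}$; the matrix-norm axiom then yields $\left\|v\otimes\left(\sigma_{i}u_{i}w_{i}^{*}\right)\right\|_{W,m,n}\leq\sigma_{i}\|v\|_{F_{\MBanB}^{\BanB}(W)}$, and summing over $i$ gives $\|v\otimes C\|_{W,m,n}\leq\|v\|_{F_{\MBanB}^{\BanB}(W)}\,\|C\|_{\mathbb{M}_{n,m}^{*}}$.

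Assembling the pieces: for any representation $A=\sum_{l=1}^{p}v_{l}\otimes C_{l}$, the triangle inequality for $\|\cdot\|_{W,m,n}$ together with the elementary estimate gives $\|A\|_{W,m,n}\leq\sum_{l=1}^{p}\|v_{l}\|_{F_{\MBanB}^{\BanB}(W)}\,\|C_{l}\|_{\mathbb{M}_{n,m}^{*}}$, and taking the infimum over all such representations produces exactly $\|A\|_{W,m,n}\leq\|A\|_{\AMAX\left(F_{\MBanB}^{\BanB}(W)\right),m,n}$.

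I expect the only friction to be bookkeeping rather than mathematics: keeping straight the identifications $\mathbb{M}_{n,m}^{*}\cong\mathbb{M}_{m,n}$, $M_{m,n}(W)\cong W\otimes\mathbb{M}_{m,n}$, and $M_{1,1}(W)\cong F_{\MBanB}^{\BanB}(W)$, and confirming that each rank-one summand of $C$ really does factor, compatibly with the $W$-valued tensor, as the triple matrix product $R_{i}[v]S_{i}$ so that axiom (2) applies. There is no analytic obstacle, and completeness of $W$ is not used.
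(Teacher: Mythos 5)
Your proof is correct and is essentially the paper's argument: the singular value decomposition you use is exactly the rank-one decomposition the paper obtains by combining the polar decomposition $C=UP$ with the spectral theorem for $P$, and both proofs then bound each rank-one piece by matrix-norm axiom (2) via the factorization through $M_{1,1}(W)$, sum by the triangle inequality, and take the infimum over representations. The only difference is cosmetic, in that you absorb the partial isometry $U$ into the SVD rather than estimating it separately.
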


\begin{proof}

For some $w\in W$ and $C\in\mathbb{M}_{m,n}$, let $P:=\left(C^{*}C\right)^{1/2}$ and $U\in\mathbb{M}_{m,n}$ satisfy $C=UP$ as in the polar decomposition.  By the spectral theorem, write $P=\sum_{j=1}^{n}s_{j}\left(h_{j}h_{j}^{*}\right)$ for some orthonormal basis $\left(h_{j}\right)_{j=1}^{n}\subseteq\mathbb{C}^{n}\cong\mathbb{M}_{n,1}$ and positive scalars $\left(s_{j}\right)_{j=1}^{n}$.  Then,
\[\begin{array}{rcl}
\|w\otimes C\|_{W,m,n}
&	=	&	\left\|U\left(w\otimes P\right)\right\|_{W,m,n}\\[12pt]
&	=	&	\left\|U\left(\sum_{j=1}^{n}\left(s_{j}w\right)\otimes\left(h_{j}h_{j}^{*}\right)\right)\right\|_{W,m,n}\\[12pt]
&	\leq	&	\left\|U\right\|_{\mathbb{M}_{m,n}}\sum_{j=1}^{n}\left\|\left(s_{j}w\right)\otimes\left(h_{j}h_{j}^{*}\right)\right\|_{W,n,n}\\[12pt]
&	\leq	&	\left\|U\right\|_{\mathbb{M}_{m,n}}\sum_{j=1}^{n}s_{j}\left\|h_{j}\right\|_{\mathbb{M}_{n,1}}\left\|w\right\|_{W,1,1}\left\|h_{j}^{*}\right\|_{\mathbb{M}_{1,n}}\\[12pt]
\end{array}\]
\[\begin{array}{rcl}
&	=	&	\left\|w\right\|_{F_{\MBanB}^{\BanB}(W)}\sum_{j=1}^{n}s_{j}\\[12pt]
&	=	&	\left\|w\right\|_{F_{\MBanB}^{\BanB}(W)}\|C\|_{\mathbb{M}_{n,m}^{*}}.\\[12pt]
\end{array}\]

If $A=\sum_{l=1}^{p}w_{l}\otimes C_{l}$, then
\[
\|A\|_{W,m,n}
\leq\sum_{l=1}^{p}\left\|w_{l}\otimes C_{l}\right\|_{W,m,n}
\leq\sum_{l=1}^{p}\left\|w_{l}\right\|_{F_{\MBanB}^{\BanB}(W)}\|C_{l}\|_{\mathbb{M}_{n,m}^{*}}.
\]
Taking the infimum over all ways of representing $A$ gives the result.

\end{proof}

Likewise, $\AMAX$ gains a universal property from its extremal nature as well.

\begin{thm}[Universal property of $\AMAX$]\label{AMAX-prop}
For a Banach space $V$ and a matricial Banach space $W$, consider a bounded linear map $\phi:V\to F_{\MBanB}^{\BanB}(W)$.  Then, there is a unique completely bounded linear map $\hat{\phi}:\AMAX(V)\to W$ such that $F_{\MBanB}^{\BanB}\left(\hat{\phi}\right)=\phi$.  Moreover,
\[
\left\|\hat{\phi}\right\|_{\CB(\AMAX(V),W)}=\|\phi\|_{\B\left(V,F_{\MBanB}^{\BanB}(W)\right)}.
\]
\end{thm}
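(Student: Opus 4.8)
The plan is to exploit the universal property of the underlying Banach space together with the maximality lemma just proven. First I would observe that $\AMAX(V)$ has underlying Banach space $V$, i.e.\ $F_{\MBanB}^{\BanB}(\AMAX(V)) = V$, since the $\AMAX$ matrix-norm at level $(1,1)$ reduces to the original norm $\|\cdot\|_V$ (a single term $v \otimes C$ with $C \in \mathbb{M}_{1,1}^* = \mathbb{C}$ gives $\|v\|_V |C|$, and the infimum is attained). Thus any completely bounded $\hat\phi \colon \AMAX(V) \to W$ satisfying $F_{\MBanB}^{\BanB}(\hat\phi) = \phi$ must, as a plain linear map $V \to F_{\MBanB}^{\BanB}(W)$, equal $\phi$; this forces $\hat\phi = \phi$ as a set-theoretic map, giving \emph{uniqueness} immediately. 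So the only real work is showing that $\phi$, viewed as a map $\AMAX(V) \to W$, is completely bounded with $\|\phi\|_{\CB} \le \|\phi\|_{\B(V, F_{\MBanB}^{\BanB}(W))}$.

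Next I would establish this norm bound. Fix $m,n \in \mathbb{N}$ and $A \in M_{m,n}(\AMAX(V))$. Take any representation $A = \sum_{l=1}^p v_l \otimes C_l$ with $v_l \in V$, $C_l \in \mathbb{M}_{n,m}$. Since $M_{m,n}(\phi)$ is linear and sends $v_l \otimes C_l$ to $\phi(v_l) \otimes C_l$ (ampliation respects the tensor/matrix-entry structure), I get
\[
\|M_{m,n}(\phi)(A)\|_{W,m,n} \le \sum_{l=1}^p \|\phi(v_l) \otimes C_l\|_{W,m,n}.
\]
Now apply the single-term estimate from the Maximality Lemma with $w = \phi(v_l)$: the lemma's proof shows $\|\phi(v_l) \otimes C_l\|_{W,m,n} \le \|\phi(v_l)\|_{F_{\MBanB}^{\BanB}(W)} \|C_l\|_{\mathbb{M}_{n,m}^*}$. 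Bounding $\|\phi(v_l)\|_{F_{\MBanB}^{\BanB}(W)} \le \|\phi\|_{\B(V, F_{\MBanB}^{\BanB}(W))} \|v_l\|_V$ and summing,
\[
\|M_{m,n}(\phi)(A)\|_{W,m,n} \le \|\phi\|_{\B} \sum_{l=1}^p \|v_l\|_V \|C_l\|_{\mathbb{M}_{n,m}^*}.
\]
Taking the infimum over all representations of $A$ yields $\|M_{m,n}(\phi)(A)\|_{W,m,n} \le \|\phi\|_{\B} \|A\|_{\AMAX(V),m,n}$. Since $m,n$ were arbitrary, $\phi$ is completely bounded with $\|\phi\|_{\CB(\AMAX(V),W)} \le \|\phi\|_{\B(V, F_{\MBanB}^{\BanB}(W))}$.

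Finally, for the reverse inequality (to get equality rather than just $\le$), I would restrict attention to the $(1,1)$ level: $M_{1,1}(\phi)$ is (up to the canonical identifications $M_{1,1}(\AMAX(V)) \cong V$ and $M_{1,1}(W) \cong F_{\MBanB}^{\BanB}(W)$) just $\phi$ itself as a bounded linear map, so $\|\phi\|_{\B(V, F_{\MBanB}^{\BanB}(W))} = \|M_{1,1}(\phi)\| \le \|\phi\|_{\CB(\AMAX(V),W)}$. Combining the two inequalities gives the claimed norm equality. I should also remark that $\hat\phi$ is genuinely a morphism of $\MBanB$ (complete boundedness is exactly what was shown, and linearity and the completeness of $W$ are inherited), so nothing further is needed. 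I expect no serious obstacle here: the entire argument is a repackaging of the Maximality Lemma's single-term estimate, and the only point requiring a little care is being explicit that $F_{\MBanB}^{\BanB}(\AMAX(V)) = V$ so that the uniqueness claim and the reverse norm inequality both go through cleanly.
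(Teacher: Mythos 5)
Your proposal is correct and follows essentially the route the paper intends: the paper only sketches the proof (citing Paulsen's Exercise 14.1 and the extremality of $\AMAX$), and your argument is exactly that sketch made explicit, reusing the single-term estimate $\left\|w\otimes C\right\|_{W,m,n}\leq\|w\|_{F_{\MBanB}^{\BanB}(W)}\|C\|_{\mathbb{M}_{n,m}^{*}}$ from the Maximality Lemma, taking the infimum over representations, and recovering the reverse inequality at the $(1,1)$ level. The only nitpick is the harmless notational slip writing $C_{l}\in\mathbb{M}_{n,m}$ where the paper's convention makes the $C_{l}$ matrices of shape $m\times n$ measured in the trace norm $\|\cdot\|_{\mathbb{M}_{n,m}^{*}}$.
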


Again, the proof of the above theorem is nearly identical to \cite[Exercise 14.1]{paulsen}, except using a general matricial Banach space instead of an operator space.  It is of note that $\MAX$ has a universal property almost identical to $\AMAX$, except that the target space for $\MAX$ must be an abstract operator space as defined in \cite[p.\ 184]{paulsen}.

\subsection{Constructions for matricial Banach spaces}\label{matrix-normed-constructions}

For Sections \ref{constructions} and \ref{haageruptensoralgebra}, some important constructions for matricial Banach spaces will be presented.  First, if a matrix-normed space is not complete, the matrix-norms may be extended naturally to the metric completion.

For abstract operator spaces as defined in \cite[p.\ 184]{paulsen}, completions are trivial since by \cite[Theorem 3.1]{ruan1988}, an abstract operator space is completely isometrically isomorphic to a subspace of operators on a Hilbert space.  Thus, the completion can be done in the space of operators.  However, this result does not apply to more general matrix-normed spaces.  As such, this result will be done in detail.

\begin{defn}[Notation for the completion]
Given a normed space $V$, let $\C(V)$ be the completion of $V$ and $\kappa_{V}:V\to \C(V)$ the canonical embedding of $V$ into $\C(V)$.
\end{defn}

\begin{lem}[Limits of matrix-norms]\label{lemma-completion}
Let $V$ be a matrix-normed space, $m,n\in\mathbb{N}$, and $A\in M_{m,n}\left(\C\left(F_{\MBanB}^{\BanB}(V)\right)\right)$.  Consider sequences $\left(B_{p}\right)_{p\in\mathbb{N}},\left(C_{p}\right)_{p\in\mathbb{N}}\subseteq M_{m,n}(V)$ such that
\[
\lim_{p\to\infty}\kappa_{V}\left(B_{p}(j,k)\right)
=
\lim_{p\to\infty}\kappa_{V}\left(C_{p}(j,k)\right)
=A(j,k)
\]
for all $1\leq j\leq m$ and $1\leq k\leq n$.  Then, the limits $\lim_{p\to\infty}\left\|B_{p}\right\|_{V,m,n}$ and $\lim_{p\to\infty}\left\|C_{p}\right\|_{V,m,n}$ converge and are equal.
\end{lem}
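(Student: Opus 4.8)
The plan is to show that the sequences $\left(\|B_p\|_{V,m,n}\right)_{p}$ and $\left(\|C_p\|_{V,m,n}\right)_{p}$ are Cauchy in $[0,\infty)$, hence convergent, and then interleave them to see the limits agree. The key tool is that convergence entrywise in the completion $\C(F_{\MBanB}^{\BanB}(V))$ forces entrywise Cauchyness in $V$ itself, and a matrix-norm is controlled entrywise up to a dimensional constant. Specifically, I would first record the elementary estimate that for any $D\in M_{m,n}(V)$,
\[
\|D\|_{V,m,n}\le\sum_{j=1}^{m}\sum_{k=1}^{n}\bigl\|D(j,k)\bigr\|_{V,1,1},
\]
obtained by writing $D=\sum_{j,k} E_{j1}\,\bigl(D(j,k)\otimes 1\bigr)\,E_{1k}$ where $E_{j1}\in\mathbb{M}_{m,1}$ and $E_{1k}\in\mathbb{M}_{1,n}$ are matrix units, applying the triangle inequality and axiom (2) of a matrix-norm, together with $\|E_{j1}\|_{\mathbb{M}_{m,1}}=\|E_{1k}\|_{\mathbb{M}_{1,n}}=1$. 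Combined with the reverse triangle inequality $\bigl|\,\|D\|_{V,m,n}-\|D'\|_{V,m,n}\,\bigr|\le\|D-D'\|_{V,m,n}$, this gives
\[
\bigl|\,\|B_p\|_{V,m,n}-\|B_q\|_{V,m,n}\,\bigr|\le\sum_{j=1}^{m}\sum_{k=1}^{n}\bigl\|B_p(j,k)-B_q(j,k)\bigr\|_{V,1,1}.
\]

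Next I would observe that since $\kappa_V$ is an isometry and $\bigl(\kappa_V(B_p(j,k))\bigr)_p$ converges in $\C(F_{\MBanB}^{\BanB}(V))$, it is Cauchy there, so $\bigl(B_p(j,k)\bigr)_p$ is Cauchy in $F_{\MBanB}^{\BanB}(V)$, i.e.\ in the $\|\cdot\|_{V,1,1}$-norm, for each fixed $j,k$. Summing finitely many such Cauchy conditions, the right-hand side of the displayed inequality tends to $0$ as $p,q\to\infty$, so $\bigl(\|B_p\|_{V,m,n}\bigr)_p$ is Cauchy in $[0,\infty)$ and therefore converges; the identical argument handles $\bigl(\|C_p\|_{V,m,n}\bigr)_p$.

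Finally, to see the two limits coincide, I would form the interleaved sequence $B_1,C_1,B_2,C_2,\ldots$, note that it also converges entrywise (in $\C(F_{\MBanB}^{\BanB}(V))$) to $A(j,k)$ by hypothesis, and run the same Cauchy argument on it; since the sequence of norms of an interleaved sequence is itself convergent only if the two interleaved subsequences share a common limit, we conclude $\lim_p\|B_p\|_{V,m,n}=\lim_p\|C_p\|_{V,m,n}$. I do not anticipate a serious obstacle here — the only point requiring any care is the entrywise-to-norm reduction in the first display, making sure the matrix-unit decomposition and axiom (2) are applied correctly with the right ambient dimensions; everything after that is a routine Cauchy-sequence manipulation.
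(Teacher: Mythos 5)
Your proposal is correct and follows essentially the same route as the paper: both reduce the matrix norm to the sum of entrywise $\|\cdot\|_{V,1,1}$-norms (you prove this bound via matrix units, whereas the paper cites Ruan's Proposition 2.1) and then run a Cauchy argument using the isometry $\kappa_{V}$ and the entrywise convergence to $A(j,k)$. The only cosmetic difference is the last step, where you interleave the two sequences instead of estimating $\lim_{p\to\infty}\left\|B_{p}-C_{p}\right\|_{V,m,n}$ directly by the same entrywise bound, as the paper does; both arguments are valid.
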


\begin{proof}

Let $\epsilon>0$.  For $1\leq j\leq m$ and $1\leq k\leq n$, there is $N_{j,k}\in\mathbb{N}$ such that if $p\geq N_{j,k}$, then $\left\|\kappa_{V}\left(B_{p}(j,k)\right)-A(j,k)\right\|_{\C\left(F_{\MBanB}^{\BanB}(V)\right)}<\frac{\epsilon}{2mn}$.  Choose $N:=\max\left\{N_{j,k}:1\leq j\leq m,1\leq k\leq n\right\}$.  For $p,q\geq N$, \cite[Proposition 2.1]{ruan1988} gives
\[
\left|\left\|B_{p}\right\|_{V,m,n}-\left\|B_{q}\right\|_{V,m,n}\right|
\leq	\left\|B_{p}-B_{q}\right\|_{V,m,n}\\[12pt]
\]
\[\begin{array}{rcl}
&	\leq	&	\sum_{j=1}^{m}\sum_{k=1}^{n}\left\|B_{p}(j,k)-B_{q}(j,k)\right\|_{V,1,1}\\[12pt]

&	=	&	\sum_{j=1}^{m}\sum_{k=1}^{n}\left\|\kappa_{V}\left(B_{p}(j,k)\right)-\kappa_{V}\left(B_{q}(j,k)\right)\right\|_{\C\left(F_{\MBanB}^{\BanB}(V)\right)}\\[12pt]

&	\leq	&	\sum_{j=1}^{m}\sum_{k=1}^{n}\left\|\kappa_{V}\left(B_{p}(j,k)\right)-A(j,k)\right\|_{\C\left(F_{\MBanB}^{\BanB}(V)\right)}\\
&	&	+\sum_{j=1}^{m}\sum_{k=1}^{n}\left\|A(j,k)-\kappa_{V}\left(B_{q}(j,k)\right)\right\|_{\C\left(F_{\MBanB}^{\BanB}(V)\right)}\\[12pt]

&	<	&	\sum_{j=1}^{m}\sum_{k=1}^{n}\frac{\epsilon}{2mn}+\sum_{j=1}^{m}\sum_{j=1}^{n}\frac{\epsilon}{2mn}\\[12pt]
&	=	&	\epsilon.\\[12pt]
\end{array}\]
The sequence $\left(\left\|B_{p}\right\|_{V,m,n}\right)_{p=1}^{\infty}$ is Cauchy and, therefore, convergent.  A similar argument shows $\left(\left\|C_{p}\right\|_{V,m,n}\right)_{p=1}^{\infty}$ is also convergent.  Then,
\[
\left|\lim_{p\to\infty}\left\|B_{p}\right\|_{V,m,n}-\lim_{p\to\infty}\left\|C_{p}\right\|_{V,m,n}\right|
\leq	\lim_{p\to\infty}\left\|B_{p}-C_{p}\right\|_{V,m,n}
\]
\[\begin{array}{rcl}
&	\leq	&	\sum_{j=1}^{m}\sum_{j=1}^{n}\lim_{p\to\infty}\left\|B_{p}(j,k)-C_{p}(j,k)\right\|_{V,1,1}\\[12pt]
&	\leq	&	\sum_{j=1}^{m}\sum_{j=1}^{n}\lim_{p\to\infty}\left\|\kappa_{V}\left(B_{p}(j,k)\right)-\kappa_{V}\left(C_{p}(j,k)\right)\right\|_{\C\left(F_{\MBanB}^{\BanB}(V)\right)}\\[12pt]
&	=	&	0.\\[12pt]
\end{array}\]

\end{proof}

\begin{defn}[Matricial completion]
Given a matrix-normed space $V$, let
\[
\MC(V):=\C\left(F_{\MBanB}^{\BanB}(V)\right)
\]
equipped with the functions defined by
\[
\|A\|_{\MC(V),m,n}:=\lim_{p\to\infty}\left\|B_{p}\right\|_{V,m,n},
\]
where $\left(B_{p}\right)_{p\in\mathbb{N}}\subseteq M_{m,n}(V)$ is any sequence satisfying
\[
\lim_{p\to\infty}\kappa_{V}\left(B_{p}(j,k)\right)=A(j,k).
\]
for all $1\leq j\leq m$ and $1\leq k\leq n$.  By Lemma \ref{lemma-completion}, this definition is unambiguous.
\end{defn}

\begin{lem}\label{matrix-norm-complete}
Equipped with the above functions, $\MC(V)$ is matricial Banach space such that
\[
\left\|M_{m,n}\left(\kappa_{V}\right)(A)\right\|_{\MC(V),m,n}=\|A\|_{V,m,n}
\]
for all $m,n\in\mathbb{N}$ and $A\in M_{m,n}(V)$.
\end{lem}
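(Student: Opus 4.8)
The plan is to verify the two matrix-norm axioms for the functions $\|\cdot\|_{\MC(V),m,n}$ directly, then establish completeness, and finally check the claimed isometry property. Throughout, the key technical device is that every element of $M_{m,n}(\MC(V))$ is an entrywise limit of a sequence from $M_{m,n}(V)$ (since $\kappa_V$ has dense range and $M_{m,n}$ is just a finite product), so each matrix-norm quantity on $\MC(V)$ is a genuine limit of the corresponding quantity on $V$ by Lemma \ref{lemma-completion}, and all inequalities transfer by passing to the limit.

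First I would check that each $\|\cdot\|_{\MC(V),m,n}$ is a norm on $M_{m,n}(\MC(V))$. Positive-definiteness is the only nontrivial point: if $\|A\|_{\MC(V),m,n}=0$, pick an approximating sequence $(B_p)$; then $\|B_p\|_{V,m,n}\to 0$, and since $\|B_p(j,k)\|_{V,1,1}\leq\|B_p\|_{V,m,n}$ (from \cite[Proposition 2.1]{ruan1988} or axiom (2) with coordinate matrix units), each entry $B_p(j,k)\to 0$ in $V$, hence $A(j,k)=\lim\kappa_V(B_p(j,k))=0$, so $A=0$. Absolute homogeneity and the triangle inequality follow by choosing approximating sequences for the matrices involved, applying the corresponding (in)equality at each level of $V$, and taking limits — here one uses that if $(B_p)$ and $(B_p')$ approximate $A$ and $A'$ entrywise then $(B_p+B_p')$ approximates $A+A'$, and similarly for scalar multiples. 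Next, the axiom $\|XAY\|_{\MC(V),k,l}\leq\|X\|_{\mathbb{M}_{k,m}}\|A\|_{\MC(V),m,n}\|Y\|_{\mathbb{M}_{n,l}}$ for scalar matrices $X,Y$: if $(B_p)$ approximates $A$ entrywise, then $(XB_pY)$ approximates $XAY$ entrywise (finite linear combinations of convergent sequences), the inequality holds at each level in $V$, and one passes to the limit.

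Then I would establish completeness. The underlying normed space of $\MC(V)$ is $\C(F_{\MBanB}^{\BanB}(V))$, which is complete by construction of the metric completion; as remarked earlier in the paper (citing \cite[p.\ 246]{effros1988}), a matrix-normed space whose underlying $1,1$-level is complete has all its matrix levels complete, so $\MC(V)$ is a matricial Banach space. One should double-check that this remark's hypotheses are met, i.e.\ that the $\|\cdot\|_{\MC(V),1,1}$-norm really does coincide with the completion norm on $\C(F_{\MBanB}^{\BanB}(V))$ — but this is immediate: for $A\in M_{1,1}(\MC(V))$ with approximating sequence $(B_p)$ one has $\|B_p\|_{V,1,1}=\|\kappa_V(B_p)\|_{\C(\cdots)}\to\|A\|_{\C(\cdots)}$.

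Finally, for the isometry claim, fix $A\in M_{m,n}(V)$ and observe that the constant sequence $B_p:=A$ satisfies $\kappa_V(B_p(j,k))=\kappa_V(A(j,k))=M_{m,n}(\kappa_V)(A)(j,k)$ for all $j,k$, so by definition $\|M_{m,n}(\kappa_V)(A)\|_{\MC(V),m,n}=\lim_{p\to\infty}\|A\|_{V,m,n}=\|A\|_{V,m,n}$. I expect the main obstacle to be not any single hard estimate but rather being careful and systematic about the ``approximate entrywise, apply the $V$-level statement, take limits'' pattern — in particular making sure that the limits invoked actually exist (which is exactly the content of Lemma \ref{lemma-completion}, so this is already handled) and that sums and scalar multiples of approximating sequences are again approximating sequences for the corresponding algebraic combination, which is where the argument could become tedious if written out in full.
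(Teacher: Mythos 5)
Your proposal is correct and follows essentially the same route as the paper: verify the matrix-norm axioms by approximating entrywise with sequences from $M_{m,n}(V)$, applying the corresponding inequality at the $V$-level, and passing to the limit (justified by Lemma \ref{lemma-completion}), then obtain completeness from the underlying space being $\C\left(F_{\MBanB}^{\BanB}(V)\right)$ together with the remark from \cite[p.\ 246]{effros1988}, and finally use the constant-sequence argument for the isometry on $M_{m,n}(\kappa_{V})$. You are in fact slightly more thorough than the paper, which leaves positive-definiteness and the agreement of the $1,1$-norm with the completion norm implicit.
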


\begin{proof}

Let $m,n,j,k\in\mathbb{N}$, $A,B\in M_{m,n}(\MC(V))$, $C\in\mathbb{M}_{j,m}$, and $D\in\mathbb{M}_{n,k}$.  For a sequence $\left(A_{p}\right)_{p\in\mathbb{N}}\subseteq M_{m,n}(V)$ that point-wise converges to $A$, note that $CA_{p}D$ point-wise converges to $CAD$ also.  Thus,
\[\begin{array}{rcl}
\|CAD\|_{\MC(V),j,k}
&	=	&	\lim_{p\to\infty}\left\|CA_{p}D\right\|_{V,j,k}\\[12pt]
&	\leq	&	\lim_{p\to\infty}\left\|C\right\|_{\mathbb{M}_{j,m}}\left\|A_{p}\right\|_{V,m,n}\left\|D\right\|_{\mathbb{M}_{n,k}}\\[12pt]
&	=	&	\left\|C\right\|_{\mathbb{M}_{j,m}}\left\|A\right\|_{\MC(V),m,n}\left\|D\right\|_{\mathbb{M}_{n,k}}.\\[12pt]
\end{array}\]

For a sequence $\left(B_{p}\right)_{p\in\mathbb{N}}\subseteq M_{m,n}(V)$ that point-wise converges to $B$, note that $A_{p}+B_{p}$ point-wise converges to $A+B$.  Thus,
\[\begin{array}{rcl}
\|A+B\|_{\MC(V),m,n}
&	=	&	\lim_{p\to\infty}\left\|A_{p}+B_{p}\right\|_{V,m,n}\\[12pt]
&	\leq	&	\lim_{p\to\infty}\left\|A_{p}\right\|_{V,m,n}+\lim_{p\to\infty}\left\|B_{p}\right\|_{V,m,n}\\[12pt]
&	=	&	\left\|A\right\|_{\MC(V),m,n}+\left\|B\right\|_{\MC(V),m,n}.\\[12pt]
\end{array}\]

If $A\in M_{m,n}(V)$, then the constant sequence $A_{p}:=A$ converges point-wise to $M_{m,n}\left(\kappa_{V}\right)(A)$, so $\left\|M_{m,n}\left(\kappa_{V}\right)(A)\right\|_{\MC(V),m,n}=\|A\|_{V,m,n}$.  As the underlying normed space of $\MC(V)$ is $\C\left(F_{\MBanB}^{\BanB}(V)\right)$, $\MC(V)$ is a matricial Banach space.

\end{proof}

As a result, the canonical embedding $\kappa_{V}$ is completely isometric.  Moreover, $\MC(V)$ has the following universal property, analogous to the universal property of the metric completion in \cite[Example I.4.17C(8)]{joyofcats}.

\begin{thm}[Universal property of the matricial completion]\label{univ-prop-complete}
Given a matricial Banach space $W$ and a completely bounded linear map $\phi:V\to W$, there is a unique completely bounded linear map $\hat{\phi}:\MC(V)\to W$ such that $\hat{\phi}\circ\kappa_{V}=\phi$.  Moreover,
\[
\left\|\hat{\phi}\right\|_{\CB\left(\MC(V),W\right)}=\|\phi\|_{\CB\left(V,W\right)}
\]
\end{thm}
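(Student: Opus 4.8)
The plan is to use the universal property of the metric completion at the level of the underlying Banach spaces and then promote the resulting bounded linear map to a completely bounded one by a limiting argument, exactly as in Lemma \ref{matrix-norm-complete}. First I would observe that $\phi:V\to W$, being completely bounded, is in particular bounded as a map $F_{\MBanB}^{\BanB}(V)\to F_{\MBanB}^{\BanB}(W)$, with $\|\phi\|_{\B} \leq \|\phi\|_{\CB}$; since $F_{\MBanB}^{\BanB}(W)$ is a Banach space, the classical universal property of the metric completion (\cite[Example I.4.17C(8)]{joyofcats}) yields a unique bounded linear map $\hat{\phi}:\MC(V)=\C\left(F_{\MBanB}^{\BanB}(V)\right)\to F_{\MBanB}^{\BanB}(W)$ with $\hat{\phi}\circ\kappa_{V}=\phi$ and $\|\hat{\phi}\|_{\B}=\|\phi\|_{\B\left(F_{\MBanB}^{\BanB}(V),F_{\MBanB}^{\BanB}(W)\right)}$. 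Uniqueness of the completely bounded lift then follows immediately, because any completely bounded lift is in particular a bounded lift of $\phi$ on the underlying Banach spaces, and the completion's universal property forces uniqueness there; so the only real work is to show this $\hat{\phi}$ is completely bounded with the claimed norm.

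For the complete boundedness, fix $m,n\in\mathbb{N}$ and $A\in M_{m,n}(\MC(V))$. Choose a sequence $\left(A_{p}\right)_{p\in\mathbb{N}}\subseteq M_{m,n}(V)$ converging entrywise to $A$ (i.e., $\kappa_{V}\left(A_{p}(j,k)\right)\to A(j,k)$ for all $j,k$), as in the definition of $\MC(V)$. Since $\hat\phi$ is continuous and linear and agrees with $\phi\circ\kappa_V^{-1}$ on the dense subspace $\kappa_V(V)$, the matrix $M_{m,n}(\hat\phi)(A_p')$ — where $A_p' := M_{m,n}(\kappa_V)(A_p)$ — converges entrywise to $M_{m,n}(\hat\phi)(A)$ in $W$, and equals $M_{m,n}(\phi)(A_p)$. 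Hence, using that $\|\cdot\|_{\MC(V),m,n}$ is computed as the limit $\lim_{p}\|A_p\|_{V,m,n}$ and that $\|\cdot\|_{W,m,n}$ is continuous in each entry (indeed Lipschitz, by the entrywise bound \cite[Proposition 2.1]{ruan1988} used in Lemma \ref{lemma-completion}), I would compute
\[
\left\|M_{m,n}(\hat\phi)(A)\right\|_{W,m,n}
=\lim_{p\to\infty}\left\|M_{m,n}(\phi)(A_p)\right\|_{W,m,n}
\leq\lim_{p\to\infty}\|\phi\|_{\CB(V,W)}\left\|A_p\right\|_{V,m,n}
=\|\phi\|_{\CB(V,W)}\,\|A\|_{\MC(V),m,n},
\]
which gives $\|M_{m,n}(\hat\phi)\|_{\B}\leq\|\phi\|_{\CB(V,W)}$ for every $m,n$, hence $\|\hat\phi\|_{\CB(\MC(V),W)}\leq\|\phi\|_{\CB(V,W)}$. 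The reverse inequality is easy: since $\kappa_V$ is completely isometric by Lemma \ref{matrix-norm-complete}, for any $A\in M_{m,n}(V)$ we have $\|M_{m,n}(\phi)(A)\|_{W,m,n}=\|M_{m,n}(\hat\phi\circ\kappa_V)(A)\|_{W,m,n}\leq\|\hat\phi\|_{\CB}\|M_{m,n}(\kappa_V)(A)\|_{\MC(V),m,n}=\|\hat\phi\|_{\CB}\|A\|_{V,m,n}$, so $\|\phi\|_{\CB(V,W)}\leq\|\hat\phi\|_{\CB(\MC(V),W)}$, and equality follows.

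The main obstacle I anticipate is the interchange of limits in the displayed inequality — specifically, justifying that $\left\|M_{m,n}(\hat\phi)(A)\right\|_{W,m,n}=\lim_{p}\left\|M_{m,n}(\phi)(A_p)\right\|_{W,m,n}$. This requires knowing that the matrix-norm $\|\cdot\|_{W,m,n}$ is jointly continuous with respect to entrywise convergence in $W$, which is precisely the content furnished by the entrywise estimate from \cite[Proposition 2.1]{ruan1988} already invoked in the proof of Lemma \ref{lemma-completion}; the same reasoning also guarantees that $M_{m,n}(\hat\phi)(A_p')$ converges entrywise to $M_{m,n}(\hat\phi)(A)$, using continuity of the single map $\hat\phi$ on $\MC(V)$. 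Everything else — existence and uniqueness of $\hat\phi$ on underlying Banach spaces, linearity, the density argument — is routine and mirrors the structure of Lemmas \ref{lemma-completion} and \ref{matrix-norm-complete} almost verbatim.
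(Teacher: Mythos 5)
Your argument is correct and is exactly the route the paper intends: the paper only remarks that the proof ``proceeds identically to the normed space case,'' and your proposal fills that in by extending $\phi$ through the universal property of the Banach-space completion and then verifying complete boundedness via the limit definition of $\|\cdot\|_{\MC(V),m,n}$ together with the entrywise estimate from \cite[Proposition 2.1]{ruan1988}, just as in Lemmas \ref{lemma-completion} and \ref{matrix-norm-complete}. The reverse norm inequality via the complete isometry of $\kappa_{V}$ is also the expected step, so no gaps remain.
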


The proof of the theorem proceeds identically to the normed space case.  The second construction is the extension of the $\ell^{1}$-direct sum of Banach spaces.

\begin{defn}[Matricial $\ell^{1}$-direct sum]
Given an index set $\Lambda$, let $\left(V_{\lambda}\right)_{\lambda\in\Lambda}$ be matricial Banach spaces.  Define
\[
V:={\coprod_{\lambda\in\Lambda}}^{\BanC}F_{\MBanB}^{\BanB}\left(V_{\lambda}\right),
\]
the $\ell^{1}$-direct sum of the underlying Banach spaces.  For $\lambda\in\Lambda$, let $\varpi_{\lambda}:V_{\lambda}\to V$ be the canonical inclusion, and $\pi_{\lambda}:V\to V_{\lambda}$ the canonical projection.  Define norm functions
\[
\|A\|_{V,m,n}:=\sum_{\lambda\in\Lambda}\left\|M_{m,n}\left(\pi_{\lambda}\right)(A)\right\|_{V_{\lambda},m,n},
\]
each an $\ell^{1}$-sum norm.  One can check that these norms constitute a matrix-norm on $V$.  Equipped with this matrix-norm, $V$ is the \emph{matricial $\ell^{1}$-direct sum} of the $V_{\lambda}$.
\end{defn}

Much like the $\ell^{1}$-direct sum of Banach spaces, the matricial $\ell^{1}$-direct sum of matricial Banach spaces has a weakened version of the coproduct universal property.

\begin{thm}[Universal property of the coproduct, $\MBanB$]\label{MBan-coproduct}
For a matricial Banach space $W$, let $\phi_{\lambda}:V_{\lambda}\to W$ be completely bounded linear maps satisfying
\[
\sup\left\{\left\|\phi_{\lambda}\right\|_{\CB\left(V_{\lambda},W\right)}:\lambda\in\Lambda\right\}<\infty.
\]
There is a unique completely bounded linear map $\phi:V\to W$ such that $\phi\circ\varpi_{\lambda}=\phi_{\lambda}$ for all $\lambda\in\Lambda$.  Moreover,
\[
\|\phi\|_{\CB\left(V,W\right)}=\sup\left\{\left\|\phi_{\lambda}\right\|_{\CB\left(V_{\lambda},W\right)}:\lambda\in\Lambda\right\}.
\]
\end{thm}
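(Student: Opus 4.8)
The plan is to construct $\phi$ first at the level of the underlying Banach spaces and then verify complete boundedness by exploiting the $\ell^{1}$-structure of the matrix-norms on $V$. I begin by recording the elementary facts about the canonical maps. Since $\pi_{\mu}\circ\varpi_{\lambda}$ is the identity when $\mu=\lambda$ and the zero map otherwise, the defining formula $\|A\|_{V,m,n}=\sum_{\mu}\|M_{m,n}(\pi_{\mu})(A)\|_{V_{\mu},m,n}$ gives at once $\|M_{m,n}(\varpi_{\lambda})(B)\|_{V,m,n}=\|B\|_{V_{\lambda},m,n}$ for all $B\in M_{m,n}(V_{\lambda})$, so each $\varpi_{\lambda}$ is completely isometric. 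In particular the linear span $D$ of $\bigcup_{\lambda\in\Lambda}\varpi_{\lambda}(V_{\lambda})$ is exactly the set of finitely supported families, and it is dense in $V$ because it is dense in the $\ell^{1}$-direct sum of the underlying Banach spaces.

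Next I set $L:=\sup\{\|\phi_{\lambda}\|_{\CB(V_{\lambda},W)}:\lambda\in\Lambda\}<\infty$ and define $\phi$ on $D$ by $\phi(A):=\sum_{\lambda}\phi_{\lambda}(\pi_{\lambda}(A))$, a finite sum. For $A\in D$ we have $\|\phi(A)\|_{W,1,1}\leq\sum_{\lambda}\|\phi_{\lambda}\|_{\CB}\|\pi_{\lambda}(A)\|_{V_{\lambda},1,1}\leq L\|A\|_{V,1,1}$, so $\phi$ extends uniquely to a bounded linear map $\phi:V\to F_{\MBanB}^{\BanB}(W)$ with $\phi\circ\varpi_{\lambda}=\phi_{\lambda}$ for every $\lambda$. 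Uniqueness in the statement is then automatic: any completely bounded linear $\psi:V\to W$ with $\psi\circ\varpi_{\lambda}=\phi_{\lambda}$ agrees with $\phi$ on the dense set $D$ and is continuous, hence equals $\phi$.

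The heart of the argument is the complete-boundedness estimate. Fix $m,n\in\mathbb{N}$. Combining \cite[Proposition 2.1]{ruan1988} (a matrix-norm is dominated by the sum of its entry norms, and each entry norm is dominated by the matrix-norm) with boundedness of $\phi$ at level $(1,1)$ shows $M_{m,n}(\phi)$ is bounded, hence continuous. For $A\in M_{m,n}(V)$ with all entries in $D$, put $A_{\lambda}:=M_{m,n}(\pi_{\lambda})(A)\in M_{m,n}(V_{\lambda})$; only finitely many are nonzero, $A=\sum_{\lambda}M_{m,n}(\varpi_{\lambda})(A_{\lambda})$, and by functoriality $M_{m,n}(\phi)(A)=\sum_{\lambda}M_{m,n}(\phi_{\lambda})(A_{\lambda})$, so
\[
\|M_{m,n}(\phi)(A)\|_{W,m,n}\leq\sum_{\lambda}\|M_{m,n}(\phi_{\lambda})(A_{\lambda})\|_{W,m,n}\leq L\sum_{\lambda}\|A_{\lambda}\|_{V_{\lambda},m,n}=L\|A\|_{V,m,n},
\]
the last equality being the definition of the matricial $\ell^{1}$-direct sum norm. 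A general $A\in M_{m,n}(V)$ is an entrywise limit of matrices $A_{p}$ with entries in $D$; the entrywise domination then forces $\|A-A_{p}\|_{V,m,n}\to0$, whence $\|A_{p}\|_{V,m,n}\to\|A\|_{V,m,n}$ and, by continuity of $M_{m,n}(\phi)$, $\|M_{m,n}(\phi)(A_{p})\|_{W,m,n}\to\|M_{m,n}(\phi)(A)\|_{W,m,n}$; passing to the limit yields $\|M_{m,n}(\phi)(A)\|_{W,m,n}\leq L\|A\|_{V,m,n}$. Taking the supremum over $m,n$ gives $\|\phi\|_{\CB(V,W)}\leq L$.

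Finally I close the loop on the norm identity: since each $\varpi_{\lambda}$ is completely isometric, $\|M_{m,n}(\phi_{\lambda})(B)\|_{W,m,n}=\|M_{m,n}(\phi)(M_{m,n}(\varpi_{\lambda})(B))\|_{W,m,n}\leq\|\phi\|_{\CB(V,W)}\|B\|_{V_{\lambda},m,n}$ for all $B$, so $\|\phi_{\lambda}\|_{\CB}\leq\|\phi\|_{\CB(V,W)}$ and hence $L\leq\|\phi\|_{\CB(V,W)}$, giving equality. I expect the only genuine subtlety to be the density/continuity passage for matrices whose entries lie outside $D$ — one must be sure that entrywise convergence in $V$ really forces convergence in each $\|\cdot\|_{V,m,n}$ and that $M_{m,n}(\phi)$ is continuous — but this is precisely what the entrywise bounds of \cite[Proposition 2.1]{ruan1988} supply, so no idea beyond the $\ell^{1}$-direct sum bookkeeping is required.
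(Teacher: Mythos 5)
Your proof is correct, and it is essentially the argument the paper has in mind: the paper omits the proof, saying it ``mirrors its Banach space counterpart,'' and your write-up is exactly that mirroring --- define $\phi$ on the dense span of the $\varpi_{\lambda}(V_{\lambda})$, extend by boundedness at level $(1,1)$, and then use the $\ell^{1}$-form of $\|\cdot\|_{V,m,n}$ together with the entrywise bounds of Ruan's Proposition 2.1 (the same estimate the paper itself invokes in Lemma \ref{lemma-completion}) to get $\|M_{m,n}(\phi)(A)\|_{W,m,n}\leq L\|A\|_{V,m,n}$, with the reverse inequality from complete isometry of the inclusions. No gaps; the density/continuity passage you flagged is handled correctly.
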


The proof of the above theorem mirrors its Banach space counterpart in \cite[Example 2.2.4.h]{borceux1}.  Moreover, this theorem guarantees that $\MBanC$ has all coproducts.  As such, the notation
\[
{\coprod_{\lambda\in\Lambda}}^{\MBanC}V_{\lambda}
\]
will be used to denote the matricial $\ell^{1}$-direct sum of the family $\left(V_{\lambda}\right)_{\lambda\in\Lambda}$.  Also, be aware that this coproduct is not the coproduct of operator spaces from \cite[p.\ 269]{oikhberg1999}, even when the summands are operator spaces as shown in the example below.

\begin{ex}[Distinction between coproducts]
Letting
\[
V:=\MIN(\mathbb{C}){\coprod}^{\MBanC}\MIN(\mathbb{C}),
\]
observe that
\[
\left\|\begin{bmatrix}(1,0) & (0,0)\\ (0,0) & (0,1)\end{bmatrix}\right\|_{V,2,2}
=2
\neq 1
=\max\left\{\|(1,0)\|_{V,1,1},\|(0,1)\|_{V,1,1}\right\},
\]
Hence, $V$ is not an abstract operator space in the sense of \cite[p.\ 184]{paulsen}.  Consequently, $V$ is not the operator space coproduct of $\MIN(\mathbb{C})$ with itself.  Moreover, this means that
\[
\MIN\left(\mathbb{C}{\coprod}^{\BanC}\mathbb{C}\right)\not\cong_{\MBanC}\MIN(\mathbb{C}){\coprod}^{\MBanC}\MIN(\mathbb{C}).
\]
\end{ex}

However, $\AMAX$ will preserve coproducts.

\begin{cor}[$\AMAX$ and direct sums]
Given an index set $\Lambda$, let $\left(V_{\lambda}\right)_{\lambda\in\Lambda}$ be Banach spaces.  Then,
\[
\AMAX\left({\coprod_{\lambda\in\Lambda}}^{\BanC}V_{\lambda}\right)
\cong_{\MBanC}
{\coprod_{\lambda\in\Lambda}}^{\MBanC}\AMAX\left(V_{\lambda}\right).
\]
\end{cor}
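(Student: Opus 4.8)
The plan is to identify $\AMAX$ as a left adjoint between the \emph{contractive} categories $\BanC$ and $\MBanC$, and then invoke the standard fact that a left adjoint preserves all colimits, in particular coproducts.

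First I would promote $\AMAX$, which Definition \ref{AMAX-definition} gives only on objects, to a functor $\BanC\to\MBanC$. Given a contractive linear map $f:V_{1}\to V_{2}$, regard it as a bounded linear map $f:V_{1}\to F_{\MBanB}^{\BanB}(\AMAX(V_{2}))$ and apply Theorem \ref{AMAX-prop}; the resulting lift $\hat f:\AMAX(V_{1})\to\AMAX(V_{2})$ has $\|\hat f\|_{\CB(\AMAX(V_{1}),\AMAX(V_{2}))}=\|f\|_{\B}\leq 1$, so it is a morphism of $\MBanC$, and one sets $\AMAX(f):=\hat f$. Preservation of identities and composites is immediate from the uniqueness clause of Theorem \ref{AMAX-prop}. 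With $\AMAX$ so construed, Theorem \ref{AMAX-prop} read off on contractive maps asserts exactly that there is a natural bijection $\MBanC(\AMAX(V),W)\cong\BanC(V,F_{\MBanC}^{\BanC}(W))$, where $F_{\MBanC}^{\BanC}$ denotes the restriction of $F_{\MBanB}^{\BanB}$ to completely contractive maps; that is, $\AMAX$ is left adjoint to $F_{\MBanC}^{\BanC}$.

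Next I would recall that both categories in play have all small coproducts, computed by the corresponding $\ell^{1}$-direct sums: $\BanC$ by $\coprod^{\BanC}$ (classical), and $\MBanC$ by $\coprod^{\MBanC}$, the latter being the content of Theorem \ref{MBan-coproduct}. Since a left adjoint preserves every colimit (see, e.g., \cite{borceux1}, \cite{joyofcats}) and a coproduct is the colimit of a discrete diagram, applying $\AMAX$ to the coproduct cocone $(V_{\lambda}\to\coprod_{\lambda}^{\BanC}V_{\lambda})$ produces a coproduct cocone in $\MBanC$ on the objects $\AMAX(V_{\lambda})$; by the essential uniqueness of coproducts it is canonically isomorphic to the cocone $(\varpi_{\lambda}:\AMAX(V_{\lambda})\to\coprod_{\lambda}^{\MBanC}\AMAX(V_{\lambda}))$, which gives the asserted isomorphism — automatically completely isometric, being an isomorphism of $\MBanC$.

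The only genuine subtlety, which I would take care to flag, is that the argument must run in the contractive categories rather than the bounded ones: $\BanB$ and $\MBanB$ lack arbitrary coproducts (the $\ell^{1}$-sums fail the universal property there — compare the uniform-norm hypothesis in Theorem \ref{MBan-coproduct}), so the adjoint-functor principle bites only after one observes, via the norm equality $\|\hat\phi\|_{\CB}=\|\phi\|_{\B}$ of Theorem \ref{AMAX-prop}, that $\AMAX$ genuinely carries $\BanC$ into $\MBanC$ with completely contractive structure maps. For readers who prefer an explicit construction, the same isomorphism can be built by hand: define $\Phi:\coprod_{\lambda}^{\MBanC}\AMAX(V_{\lambda})\to\AMAX(\coprod_{\lambda}^{\BanC}V_{\lambda})$ from $\AMAX$ applied to the canonical inclusions $V_{\lambda}\to\coprod_{\lambda}^{\BanC}V_{\lambda}$ through Theorem \ref{MBan-coproduct}, and $\Psi$ in the reverse direction as the lift provided by Theorem \ref{AMAX-prop} of the identity on the common underlying Banach space $\coprod_{\lambda}^{\BanC}V_{\lambda}$; then $\Phi\Psi=\mathrm{id}$ and $\Psi\Phi=\mathrm{id}$ fall out of the two uniqueness clauses, and the maximality of $\AMAX$ lemma supplies the inequality $\|A\|_{\coprod_{\lambda}^{\MBanC}\AMAX(V_{\lambda}),m,n}\leq\|A\|_{\AMAX(\coprod_{\lambda}^{\BanC}V_{\lambda}),m,n}$ for free, the reverse inequality being precisely the complete contractivity of $\Psi$.
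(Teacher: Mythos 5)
Your proposal is correct and is essentially the paper's own argument: the paper proves this corollary in one line by observing that $\AMAX$ is a left adjoint (to the forgetful functor in the contractive setting, via the norm equality in Theorem \ref{AMAX-prop}) and that left adjoints preserve coproducts, exactly as you do. Your additional care about working in $\BanC$/$\MBanC$ rather than the bounded categories, and the optional hands-on construction of the mutually inverse maps, are sound elaborations of the same route rather than a different proof.
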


The proof follows immediately as $\AMAX$ is a left adjoint functor.  The final construction is the analogue of the projective tensor product.

\begin{defn}[{Haagerup tensor product, \cite[\S 3]{blecher1991}}]
Given matricial Banach spaces $V$ and $W$, let $m,n,p\in\mathbb{N}$, $A\in M_{m,p}(V)$, and $B\in M_{p,n}(W)$.  Define the \emph{tensor matrix product} $A\odot B\in M_{m,n}(V\otimes W)$ of $A$ and $B$ entrywise by
\[
(A\odot B)(i,k):=\sum_{j=1}^{p}A(i,j)\otimes B(j,k).
\]
Define the \emph{Haagerup matrix-norm} on $V\otimes W$ by
\[
\|C\|_{V\otimes_{h}W,m,n}:=
\inf\left\{\sum_{l=1}^{p}\left\|A_{l}\right\|_{V,m,q_{l}}\left\|B_{l}\right\|_{W,q_{l},n}: C=\sum_{l=1}^{p}A_{l}\odot B_{l}\right\}.
\]
Let $V\otimes_{h}W$ denote $V\otimes W$ equipped with this matrix-norm and completed into a matricial Banach space, the \emph{Haagerup tensor product} of $V$ and $W$.
\end{defn}

Similar to the projective tensor product, the Haagerup tensor product has a universal property when dealing with a class of bilinear maps.

\begin{defn}[{Completely bounded bilinear maps, \cite[p.\ 250]{paulsen}}]
Given matricial Banach spaces $V$ and $W$, let $m,n,p\in\mathbb{N}$, $A\in M_{m,p}(V)$, and $B\in M_{p,n}(W)$.  For a bilinear map $\phi:V\times W\to Z$, define the \emph{$\phi$-matrix product} $A\odot_{\phi}B\in M_{m,n}(Z)$ of $A$ and $B$ entrywise by
\[
(A\odot_{\phi}B)(i,k):=\sum_{j=1}^{p}\phi\left(A(i,j),B(j,k)\right).
\]
The map $\phi$ is \emph{completely bounded} if there is $L\geq0$ such that
\[
\left\|A\odot_{\phi}B\right\|_{Z,m,n}\leq L\cdot\|A\|_{V,m,p}\|B\|_{W,p,n}
\]
for all $m,n,p\in\mathbb{N}$, $A\in M_{m,p}(V)$, and $B\in M_{p,n}(W)$.  Let
\[
\|\phi\|_{\CB(V,W;Z)}:=\inf\left\{L\in[0,\infty):\begin{array}{c}\left\|A\odot_{\phi}B\right\|_{Z,m,n}\leq L\cdot\|A\|_{V,m,p}\|B\|_{W,p,n}\\ \forall m,n,p\in\mathbb{N}, A\in M_{m,p}(V), B\in M_{p,n}(W)\end{array}\right\}.
\]
\end{defn}

\begin{thm}[{Universal property of $\otimes_{h}$, \cite[Exercise 17.3]{paulsen}}]\label{haagerup}
Given matricial Banach spaces $V$, $W$, and $Z$, consider a completely bounded bilinear map $\phi:V\times W\to Z$.  There is a unique completely bounded linear map $\hat{\phi}:V\otimes_{h}W\to Z$ such that $\hat{\phi}(v\otimes w)=\phi(v,w)$ for all $v\in V$ and $w\in W$.  Moreover,
\[
\left\|\hat{\phi}\right\|_{\CB\left(V\otimes_{h}W,Z\right)}
=
\left\|\phi\right\|_{\CB\left(V,W;Z\right)}.
\]
\end{thm}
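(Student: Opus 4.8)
The plan is to verify the two directions of the norm equality separately, after constructing the linear map $\hat\phi$ and checking well-definedness. First, observe that any completely bounded bilinear map $\phi:V\times W\to Z$ is in particular bounded (take $m=n=p=1$), hence separately linear in each variable, so it factors through a linear map $\hat\phi:V\otimes W\to Z$ on the algebraic tensor product with $\hat\phi(v\otimes w)=\phi(v,w)$; this is the usual universal property of the algebraic tensor product and needs no topology. The real work is to show $\hat\phi$ is completely bounded with $\|\hat\phi\|_{\CB(V\otimes_h W,Z)}\le\|\phi\|_{\CB(V,W;Z)}=:L$ with respect to the Haagerup matrix-norm, for then $\hat\phi$ extends uniquely to the completion $V\otimes_h W$ with the same $\CB$-norm by Theorem \ref{univ-prop-complete}, and uniqueness on all of $V\otimes_h W$ follows since $V\otimes W$ is dense.

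For the inequality $\|\hat\phi\|_{\CB}\le L$, I would fix $m,n\in\mathbb{N}$ and $C\in M_{m,n}(V\otimes W)$, and estimate $\|M_{m,n}(\hat\phi)(C)\|_{Z,m,n}$. Given any representation $C=\sum_{l=1}^{p}A_l\odot B_l$ with $A_l\in M_{m,q_l}(V)$, $B_l\in M_{q_l,n}(W)$, the key computation is that applying $\hat\phi$ entrywise to $A_l\odot B_l$ yields exactly $A_l\odot_\phi B_l$, i.e. $M_{m,n}(\hat\phi)(A_l\odot B_l)=A_l\odot_\phi B_l$; this is immediate from the entrywise formulas for $\odot$ and $\odot_\phi$ together with $\hat\phi(v\otimes w)=\phi(v,w)$ and linearity. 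Then by the triangle inequality in $M_{m,n}(Z)$ and complete boundedness of $\phi$,
\[
\left\|M_{m,n}(\hat\phi)(C)\right\|_{Z,m,n}
\le\sum_{l=1}^{p}\left\|A_l\odot_\phi B_l\right\|_{Z,m,n}
\le L\sum_{l=1}^{p}\|A_l\|_{V,m,q_l}\|B_l\|_{W,q_l,n}.
\]
Taking the infimum over all such representations gives $\|M_{m,n}(\hat\phi)(C)\|_{Z,m,n}\le L\|C\|_{V\otimes_h W,m,n}$; supremizing over $m,n$ yields $\|\hat\phi\|_{\CB}\le L$. (A minor point: this a priori controls $\hat\phi$ on the dense subspace; the extension to the completion preserves the bound, and one should note the matrix-norms on $V\otimes_h W$ are the limits of those on $V\otimes W$ by the matricial completion construction, so the $\CB$-bound transfers verbatim.)

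For the reverse inequality $\|\phi\|_{\CB(V,W;Z)}\le\|\hat\phi\|_{\CB}$, I would run the same entrywise identity in the other direction: given $m,n,p\in\mathbb{N}$, $A\in M_{m,p}(V)$, $B\in M_{p,n}(W)$, set $C:=A\odot B\in M_{m,n}(V\otimes W)$, so that $A\odot_\phi B=M_{m,n}(\hat\phi)(C)$ and hence
\[
\left\|A\odot_\phi B\right\|_{Z,m,n}
=\left\|M_{m,n}(\hat\phi)(C)\right\|_{Z,m,n}
\le\|\hat\phi\|_{\CB}\,\|C\|_{V\otimes_h W,m,n}
\le\|\hat\phi\|_{\CB}\,\|A\|_{V,m,p}\|B\|_{W,p,n},
\]
where the last step uses that $C=A\odot B$ is itself one admissible representation, so its Haagerup norm is at most $\|A\|_{V,m,p}\|B\|_{W,p,n}$. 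By definition of $\|\phi\|_{\CB(V,W;Z)}$ as an infimum of such constants $L$, this gives $\|\phi\|_{\CB(V,W;Z)}\le\|\hat\phi\|_{\CB}$, completing the equality.

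I expect the only genuinely delicate point to be bookkeeping around the completion: $\hat\phi$ is built on the algebraic tensor product where the Haagerup "norm" may only be a seminorm a priori, so one should either appeal to Theorem \ref{univ-prop-complete} to pass cleanly to $V\otimes_h W$, or remark that the displayed bound forces $\hat\phi$ to kill the null space and thus descends. Everything else — the entrywise identity $M_{m,n}(\hat\phi)(A\odot B)=A\odot_\phi B$, the triangle inequality, and the infimum manipulations — is routine, paralleling the projective tensor product argument referenced for $\mathcal{T}$ and matching \cite[Exercise 17.3]{paulsen}.
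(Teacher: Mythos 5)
Your proposal is correct. The paper itself gives no proof of Theorem \ref{haagerup} --- it simply defers to \cite[Exercise 17.3]{paulsen} --- and your argument (factor the bilinear map through the algebraic tensor product, use the entrywise identity $M_{m,n}\left(\hat{\phi}\right)(A\odot B)=A\odot_{\phi}B$ together with the triangle inequality and an infimum over representations for the bound $\left\|\hat{\phi}\right\|_{\CB}\leq\|\phi\|_{\CB(V,W;Z)}$, the single-representation estimate $\|A\odot B\|_{V\otimes_{h}W,m,n}\leq\|A\|_{V,m,p}\|B\|_{W,p,n}$ for the reverse inequality, and Theorem \ref{univ-prop-complete} to pass to the completion with the same $\CB$-norm) is precisely the standard argument that citation points to, including your correct handling of the possible null space of the Haagerup seminorm on the algebraic tensor product. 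The only cosmetic slip is the opening remark that boundedness yields separate linearity: bilinearity is part of the hypothesis, and it is bilinearity, not boundedness, that licenses the factorization through $V\otimes W$.
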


By \cite[Proposition 3.1]{blecher1991}, $\otimes_{h}$ is associative.  Moreover, $\otimes_{h}$ interacts well with $\MIN(\mathbb{C})$, $\AMAX$, and the projective tensor product of Banach spaces.  The first result shows that $\MIN(\mathbb{C})$ acts as an identity for $\otimes_{h}$.

\begin{prop}[Unit object of $\otimes_{h}$]
For a matricial Banach space $V$,
\[
V\otimes_{h}\MIN(\mathbb{C})
\cong_{\MBanC}\MIN(\mathbb{C})\otimes_{h}V
\cong_{\MBanC}V.
\]
\end{prop}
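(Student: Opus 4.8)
The plan is to prove the two-sided unit isomorphism by exhibiting, for each side, a completely contractive linear map and a completely contractive inverse, using the universal property of $\otimes_{h}$ (Theorem \ref{haagerup}) in one direction and a direct norm estimate in the other. By associativity of $\otimes_{h}$ and the symmetry of the statement it suffices to treat $V\otimes_{h}\MIN(\mathbb{C})$; the case $\MIN(\mathbb{C})\otimes_{h}V$ is entirely parallel. The candidate isomorphism is the linear extension of $v\otimes\lambda\mapsto\lambda v$, with inverse the linear extension of $v\mapsto v\otimes 1$.

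First I would construct the forward map. The bilinear map $\beta:V\times\MIN(\mathbb{C})\to V$ given by $\beta(v,\lambda):=\lambda v$ needs to be checked completely bounded with $\|\beta\|_{\CB(V,\MIN(\mathbb{C});V)}\leq 1$: for $A\in M_{m,p}(V)$ and $B\in M_{p,n}(\MIN(\mathbb{C}))=M_{p,n}(\mathbb{C})=\mathbb{M}_{p,n}$, the $\beta$-matrix product $A\odot_{\beta}B$ is literally the matrix product $AB$, so axiom (2) of the matrix-norm on $V$ gives $\|A\odot_{\beta}B\|_{V,m,n}=\|AB\|_{V,m,n}\leq\|A\|_{V,m,p}\|B\|_{\mathbb{M}_{p,n}}$, and here $\|B\|_{\mathbb{M}_{p,n}}=\|B\|_{\MIN(\mathbb{C}),p,n}$ since $\MIN(\mathbb{C})$ carries exactly the operator matrix-norm on scalar matrices. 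Theorem \ref{haagerup} then yields a completely contractive linear $\Phi:V\otimes_{h}\MIN(\mathbb{C})\to V$ with $\Phi(v\otimes\lambda)=\lambda v$.

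For the inverse, consider the linear map $\Psi_{0}:V\to V\otimes_{h}\MIN(\mathbb{C})$, $v\mapsto v\otimes 1$. To see $\Psi_{0}$ is completely contractive, take $A\in M_{m,n}(V)$ and write $M_{m,n}(\Psi_{0})(A)=A\odot I_{n}$, where $I_{n}\in M_{n,n}(\MIN(\mathbb{C}))$ is the identity matrix and $\odot$ is the tensor matrix product; the defining infimum for the Haagerup norm, using this single representation, gives $\|M_{m,n}(\Psi_{0})(A)\|_{V\otimes_{h}\MIN(\mathbb{C}),m,n}\leq\|A\|_{V,m,n}\|I_{n}\|_{\MIN(\mathbb{C}),n,n}=\|A\|_{V,m,n}$. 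Since $V\otimes_{h}\MIN(\mathbb{C})$ is complete, $\Psi_{0}$ factors (no completion issue arises on the domain as $V$ is already a matricial Banach space), giving a completely contractive $\Psi$. On elementary tensors $\Phi\circ\Psi(v)=\Phi(v\otimes 1)=v$ and $\Psi\circ\Phi(v\otimes\lambda)=\Psi(\lambda v)=\lambda v\otimes 1=v\otimes\lambda$, and since elementary tensors span a dense subspace of $V\otimes_{h}\MIN(\mathbb{C})$, the two composites are the respective identities. Thus $\Phi$ is a completely isometric isomorphism.

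The main obstacle I anticipate is the bookkeeping for the inverse direction: one must be careful that expressing $M_{m,n}(\Psi_{0})(A)$ as $A\odot I_{n}$ uses the correct shapes ($A\in M_{m,n}(V)$, $I_{n}\in M_{n,n}(\MIN(\mathbb{C}))$, product in $M_{m,n}$), and that $\|I_{n}\|_{\MIN(\mathbb{C}),n,n}=1$ — this is where Definition \ref{MIN-definition} is used, evaluating the injective tensor norm of the identity matrix, or equivalently noting $\MIN(\mathbb{C})$ assigns scalar matrices their operator norm. Everything else is routine: complete contractivity of the forward map is just matrix-norm axiom (2), and the inverse identities hold on a dense set by construction, so density plus continuity of completely bounded maps closes the argument.
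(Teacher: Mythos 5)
Your proposal is correct and takes essentially the same route as the paper, which merely asserts that the canonical maps $v\otimes\lambda\mapsto\lambda v$ and $\lambda\otimes v\mapsto\lambda v$ are completely isometric by direct computation. Your argument---complete contractivity of that map via Theorem \ref{haagerup} (the $\beta$-matrix product being an ordinary matrix product controlled by matrix-norm axiom (2)), together with complete contractivity of the inverse $v\mapsto v\otimes 1$ obtained from the single representation $A\odot I_{n}$ in the Haagerup norm, and density of elementary tensors---is exactly that computation carried out in detail.
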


The proof of the above proposition is showing the canonical maps $\lambda\otimes v\mapsto\lambda v$ and $v\otimes\lambda\mapsto\lambda v$ are completely isometric, which follow readily from direct computation.  Tedious calculations can show that $\otimes_{h}$ is a monoidal product on the categories $\MBanB$ and $\MBanC$.

Lastly, $\AMAX$ actually converts the projective tensor into the Haagerup tensor.  This will be proven as the author has no knowledge of its proof in the literature.

\begin{thm}[$\AMAX$, $\hat{\otimes}$, $\otimes_{h}$]
Given Banach spaces $V$ and $W$,
\[
\AMAX\left(V\hat{\otimes}W\right)
\cong_{\MBanC}
\AMAX(V)\otimes_{h}\AMAX(W).
\]
\end{thm}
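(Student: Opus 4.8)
The plan is to produce mutually inverse completely contractive linear maps
\[
\Phi\colon\AMAX(V)\otimes_{h}\AMAX(W)\to\AMAX\left(V\hat{\otimes}W\right),\qquad
\Psi\colon\AMAX\left(V\hat{\otimes}W\right)\to\AMAX(V)\otimes_{h}\AMAX(W),
\]
each acting on elementary tensors by $v\otimes w\mapsto v\otimes w$; since both are completely contractive and mutually inverse, either one is automatically a complete isometry. Everything rests on a single estimate: \emph{if $Z$ is a matricial Banach space and $\phi\colon V\times W\to F_{\MBanB}^{\BanB}(Z)$ is a contractive bilinear map, then $\phi$, regarded as a bilinear map $\AMAX(V)\times\AMAX(W)\to Z$, is completely bounded with $\|\phi\|_{\CB(\AMAX(V),\AMAX(W);Z)}\le 1$.}

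To prove this estimate, fix $m,n,p\in\mathbb{N}$, $A\in M_{m,p}(V)$, $B\in M_{p,n}(W)$, and choose representations $A=\sum_{l}v_{l}\otimes C_{l}$ and $B=\sum_{l'}w_{l'}\otimes D_{l'}$ with $v_{l}\in V$, $C_{l}\in\mathbb{M}_{p,m}^{*}$, $w_{l'}\in W$, $D_{l'}\in\mathbb{M}_{n,p}^{*}$. A direct computation with the bilinearity of $\phi$ gives $A\odot_{\phi}B=\sum_{l,l'}\phi(v_{l},w_{l'})\otimes(C_{l}D_{l'})$, so by the Maximality of $\AMAX$ Lemma (applied to $Z$), the definition of the $\AMAX$-norm, and the submultiplicativity of the trace norm, namely $\|C_{l}D_{l'}\|_{\mathbb{M}_{n,m}^{*}}\le\|C_{l}\|_{\mathbb{M}_{p,m}^{*}}\|D_{l'}\|_{\mathbb{M}_{n,p}^{*}}$,
\[
\|A\odot_{\phi}B\|_{Z,m,n}\ \le\ \sum_{l,l'}\|\phi(v_{l},w_{l'})\|_{F_{\MBanB}^{\BanB}(Z)}\|C_{l}D_{l'}\|_{\mathbb{M}_{n,m}^{*}}\ \le\ \left(\sum_{l}\|v_{l}\|_{V}\|C_{l}\|_{\mathbb{M}_{p,m}^{*}}\right)\left(\sum_{l'}\|w_{l'}\|_{W}\|D_{l'}\|_{\mathbb{M}_{n,p}^{*}}\right).
\]
Taking the infimum over all such representations of $A$ and then of $B$ yields $\|A\odot_{\phi}B\|_{Z,m,n}\le\|A\|_{\AMAX(V),m,p}\|B\|_{\AMAX(W),p,n}$, as claimed. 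Applying this with $Z:=\AMAX(V\hat{\otimes}W)$ and the contractive bilinear map $(v,w)\mapsto v\otimes w$ (contractive because $\|v\otimes w\|_{V\hat{\otimes}W}\le\|v\|_{V}\|w\|_{W}$), Theorem \ref{haagerup} produces a completely contractive linear $\Phi$ with $\Phi(v\otimes w)=v\otimes w$.

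For $\Psi$, observe that the bilinear map $V\times W\to F_{\MBanB}^{\BanB}\left(\AMAX(V)\otimes_{h}\AMAX(W)\right)$, $(v,w)\mapsto v\otimes w$, is contractive: from the trivial Haagerup decomposition $v\otimes w=(v)\odot(w)$ one gets $\|v\otimes w\|_{\AMAX(V)\otimes_{h}\AMAX(W),1,1}\le\|v\|_{\AMAX(V),1,1}\|w\|_{\AMAX(W),1,1}=\|v\|_{V}\|w\|_{W}$. By the universal property of the projective tensor product this extends to a contractive linear map $V\hat{\otimes}W\to F_{\MBanB}^{\BanB}\left(\AMAX(V)\otimes_{h}\AMAX(W)\right)$, and then Theorem \ref{AMAX-prop} lifts it to a completely contractive linear $\Psi$ with $\Psi(v\otimes w)=v\otimes w$. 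Finally, $\Psi\circ\Phi$ and $\Phi\circ\Psi$ fix every elementary tensor $v\otimes w$; the span of these is dense in $\AMAX(V)\otimes_{h}\AMAX(W)$ by the construction of the Haagerup tensor product, and dense in $\AMAX(V\hat{\otimes}W)$ because its underlying Banach space is $V\hat{\otimes}W$, in which the algebraic tensor product is dense. Hence by continuity both composites are the identity, so $\Phi$ and $\Psi$ are mutually inverse completely contractive isomorphisms, whence $\AMAX(V\hat{\otimes}W)\cong_{\MBanC}\AMAX(V)\otimes_{h}\AMAX(W)$.

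I expect the main obstacle to be the displayed estimate: keeping the dimensions of the scalar coefficient matrices $C_{l}\in\mathbb{M}_{p,m}^{*}$ and $D_{l'}\in\mathbb{M}_{n,p}^{*}$ consistent through the product $C_{l}D_{l'}\in\mathbb{M}_{n,m}^{*}$, verifying the bilinearity identity for $A\odot_{\phi}B$, and confirming that the trace norm is submultiplicative under matrix multiplication (which follows from $\|XY\|_{\mathrm{tr}}\le\|X\|_{\mathrm{op}}\|Y\|_{\mathrm{tr}}$ together with $\|X\|_{\mathrm{op}}\le\|X\|_{\mathrm{tr}}$). Everything after that is a routine chase through the universal properties of $\otimes_{h}$, of $\AMAX$, and of $\hat{\otimes}$ already recorded above.
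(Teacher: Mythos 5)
Your proposal is correct and follows essentially the same route as the paper: the same key estimate on representations $A=\sum_{l}v_{l}\otimes C_{l}$, $B=\sum_{l'}w_{l'}\otimes D_{l'}$ with submultiplicativity of the trace norm, followed by the universal properties of $\otimes_{h}$, $\hat{\otimes}$, and $\AMAX$ to produce the two completely contractive maps fixing elementary tensors. The only cosmetic differences are that you package the estimate as a small general lemma via the Maximality of $\AMAX$ (the paper computes directly with the target $\AMAX\left(V\hat{\otimes}W\right)$ and the definition of its norm) and that you verify the composites are identities by density and continuity rather than by the uniqueness clauses of the universal properties; both are fine.
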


\begin{proof}

Define $\phi:\AMAX(V)\times\AMAX(W)\to\AMAX\left(V\hat{\otimes}W\right)$ by $\phi(v,w):=v\otimes w$.  This map is quickly seen to be bilinear, so it remains to show it completely bounded.  For $m,n,p\in\mathbb{N}$, let $A\in M_{m,p}(V)$ and $B\in M_{p,n}(W)$.  Write $A=\sum_{t=1}^{q}v_{t}\otimes C_{t}$ and $B=\sum_{s=1}^{r}w_{s}\otimes D_{s}$ for $v_{t}\in V$, $w_{s}\in W$, $C_{t}\in\mathbb{M}_{m,p}$, and $D_{s}\in\mathbb{M}_{p,n}$.  Observe that
\[
A\odot_{\phi}B=\sum_{t=1}^{q}\sum_{s=1}^{r}\left(v_{t}\otimes w_{s}\right)\otimes\left(C_{t}D_{s}\right)
\]
by the bilinearity of $\phi$.  Thus,
\[\begin{array}{rcl}
\left\|A\odot_{\phi}B\right\|_{\AMAX\left(V\hat{\otimes}W\right),m,n}
&	\leq	&	\sum_{t=1}^{q}\sum_{s=1}^{r}\left\|v_{t}\otimes w_{s}\right\|_{V\hat{\otimes}W}\left\|C_{t}D_{s}\right\|_{\mathbb{M}_{n,m}^{*}}\\[15pt]
&	=	&	\sum_{t=1}^{q}\sum_{s=1}^{r}\left\|v_{t}\right\|_{V}\left\|w_{s}\right\|_{W}\left\|C_{t}D_{s}\right\|_{\mathbb{M}_{n,m}^{*}}\\[15pt]
&	\leq	&	\sum_{t=1}^{q}\sum_{s=1}^{r}\left\|v_{t}\right\|_{V}\left\|w_{s}\right\|_{W}\left\|C_{t}\right\|_{\mathbb{M}_{p,m}^{*}}\left\|D_{s}\right\|_{\mathbb{M}_{n,p}^{*}}\\[15pt]
&	=	&	\left(\sum_{t=1}^{q}\left\|v_{t}\right\|_{V}\left\|C_{t}\right\|_{\mathbb{M}_{p,m}^{*}}\right)\left(\sum_{s=1}^{r}\left\|w_{s}\right\|_{W}\left\|D_{s}\right\|_{\mathbb{M}_{n,p}^{*}}\right).\\[15pt]
\end{array}\]
Taking infima yields
\[
\left\|A\odot_{\phi}B\right\|_{\AMAX\left(V\hat{\otimes}W\right),m,n}
\leq\|A\|_{\AMAX(V),m,p}\|B\|_{\AMAX(W),p,n}.
\]
By Theorem \ref{haagerup}, there is a unique completely contractive linear map $\hat{\phi}:\AMAX(V)\otimes_{h}\AMAX(W)\to\AMAX\left(V\hat{\otimes}W\right)$ such that $\hat{\phi}(v\otimes w)=\phi(v,w)=v\otimes w$.

Define $\varphi:V\times W\to F_{\MBanB}^{\BanB}\left(\AMAX(V)\otimes_{h}\AMAX(W)\right)$ by $\varphi(v,w):=v\otimes w$.  This map is quickly seen to be bilinear, so it remains to show it bounded.  For $v\in V$ and $w\in W$,
\[\begin{array}{rcl}
\|\varphi(v,w)\|_{\AMAX(V)\otimes_{h}\AMAX(W),1,1}
&	=	&	\|v\otimes w\|_{\AMAX(V)\otimes_{h}\AMAX(W),1,1}\\[8pt]
&	=	&	\|v\|_{\AMAX(V),1,1}\cdot\|w\|_{\AMAX(W),1,1}\\[8pt]
&	=	&	\|v\|_{V}\cdot\|w\|_{W}.\\[8pt]
\end{array}\]
By the universal property of the projective tensor product, there is a unique contractive linear map $\hat{\varphi}:V\hat{\otimes}W\to F_{\MBanB}^{\BanB}\left(\AMAX(V)\otimes_{h}\AMAX(W)\right)$ such that $\hat{\varphi}(v\otimes w)=\varphi(v,w)=v\otimes w$.  By Theorem \ref{AMAX-prop}, there is a unique completely contractive linear map $\tilde{\varphi}:\AMAX\left(V\hat{\otimes}W\right)\to\AMAX(V)\otimes_{h}\AMAX(W)$ such that $F_{\MBanB}^{\BanB}\left(\tilde{\varphi}\right)=\hat{\varphi}$.

Immediate calculations show that
\[\begin{array}{ccc}
\left(\tilde{\varphi}\circ\hat{\phi}\right)(v\otimes w)=v\otimes w
& \textrm{and} &
\left(\hat{\phi}\circ\tilde{\varphi}\right)(v\otimes w)=v\otimes w,
\end{array}\]
for all $v\in V$ and $w\in W$.  By the universal properties of $\hat{\otimes}$, $\AMAX$, and $\otimes_{h}$,
\[\begin{array}{ccc}
\tilde{\varphi}\circ\hat{\phi}=id_{\AMAX(V)\otimes_{h}\AMAX(W)}
& \textrm{and} &
\hat{\phi}\circ\tilde{\varphi}=id_{\AMAX\left(V\hat{\otimes}W\right)}.
\end{array}\]

\end{proof}

\section{Array-Weighted Sets}\label{awsets}

This section introduces a new category of objects for the construction of a scaled-free matrix-normed space.  The content of this section is based heavily on the results in \cite[\S 2]{grilliette1} and can be considered an extension of both \cite[\S 1.1]{gerbracht} and \cite[\S 2.2]{grandis2004}.

As with Banach spaces, the forgetful functor from $\MBanB$ to $\Set$ stripping all structure will not have a left adjoint, meaning there is no free matricial Banach space.  Instead, one could consider the forgetful functor from $\MBanB$ to $\WSetB$, where all structure is dropped except for the norm and the underlying set.  However, due to \cite[Theorem 3.1.1]{grilliette1} and Theorem \ref{AMAX-prop}, closure of left adjoints states that the left adjoint must be $\AMAX\circ\BanSp$.  Consequently, the absolute maximum matrix-norm is imposed, which does not allow tighter bounds on the matrix-norm beyond the underlying normed space.

The objects defined in Section \ref{definitions} will remedy this issue through an ``array-weight'', which will allow finer control for the object built in Section \ref{constructions}.  Section \ref{max-min} produces two extremal ways of extending a weight function on a set to an array-weight, much like extending a norm to a matrix-norm.  Section \ref{zero-weight} describes a minimal way of appending an element with weight value 0 to an existing array-weighted set, which is useful in building an array-weight on the disjoint union in Section \ref{set-constructions}.  Section \ref{array-free} discusses maps of these array-weighted sets into $\MIN(\mathbb{C})$, which will have an effect on linear independence of generators in Section \ref{constructions}.

\subsection{Definitions and Basic Results}\label{definitions}

To motivate the main definition of this section, consider the following two properties of a matrix-normed space.  The norm of a matrix is bounded below by any compression or rearrangement of rows and columns.  Likewise, the norm of a matrix is bounded above by the sum of the norms of its blocks.  The following example illustrates these two properties explicitly.

\begin{ex}\label{motivating}
Let $V$ be a matrix-normed space.  For integers $1\leq j\leq m$ and a one-to-one function $\alpha:[j]\to[m]$, define the isometry $U_{\alpha}:\mathbb{C}^{j}\to\mathbb{C}^{m}$ on the standard basis by $U_{\alpha}\left(e_{a}\right):=e_{\alpha(a)}$.

For integers $1\leq j\leq m$ and $1\leq k\leq n$, let $\alpha:[j]\to[m]$ and $\beta:[k]\to[n]$ be one-to-one, and $\alpha\times\beta:[j]\times[k]\to[m]\times[n]$ be the cartesian product map.  From Definition \ref{matrix-notation}, recall that $A\in M_{m,n}(V)$ is fundamentally a function from $[m]\times[n]$ to $V$, so the composition $A\circ(\alpha\times\beta)\in M_{j,k}(V)$ would be defined entrywise by
\[
\left(A\circ(\alpha\times\beta)\right)(a,b)=A(\alpha(a),\beta(b))
\]
for all $1\leq a\leq j$ and $1\leq b\leq k$.  A quick calculation shows that $A\circ(\alpha\times\beta)=U_{\alpha}^{*}AU_{\beta}$, so
\[
\left\|A\circ(\alpha\times\beta)\right\|_{V,j,k}
=\left\|U_{\alpha}^{*}AU_{\beta}\right\|_{V,j,k}
\leq\left\|U_{\alpha}^{*}\right\|_{\mathbb{M}_{j,m}}\|A\|_{V,m,n}\left\|U_{\beta}\right\|_{\mathbb{M}_{n,k}}
=\|A\|_{V,m,n}.
\]

In the case $j<m$, let $\gamma:[m-j]\to[m]$ be one-to-one such that $\Ran(\alpha)\cap\Ran(\gamma)=\emptyset$.  Then, the identity $I_{m}$ of $\mathbb{M}_{m,m}$ can be written as $U_{\alpha}U_{\alpha}^{*}+U_{\gamma}U_{\gamma}^{*}$.  Thus,
\[\begin{array}{rcl}
\|A\|_{V,m,n}
&	=	&	\left\|I_{m}A\right\|_{V,m,n}\\[12pt]
&	=	&	\left\|\left(U_{\alpha}U_{\alpha}^{*}+U_{\gamma}U_{\gamma}^{*}\right)A\right\|_{V,m,n}\\[12pt]
&	=	&	\left\|U_{\alpha}U_{\alpha}^{*}A+U_{\gamma}U_{\gamma}^{*}A\right\|_{V,m,n}\\[12pt]
&	\leq	&	\left\|U_{\alpha}\right\|_{\mathbb{M}_{m,j}}\left\|U_{\alpha}^{*}A\right\|_{V,j,n}+\left\|U_{\gamma}\right\|_{\mathbb{M}_{m,m-j}}\left\|U_{\gamma}^{*}A\right\|_{V,m-j,n}\\[12pt]
&	=	&	\left\|U_{\alpha}^{*}A\right\|_{V,j,n}+\left\|U_{\gamma}^{*}A\right\|_{V,m-j,n}\\[12pt]
&	=	&	\left\|A\circ\left(\alpha\times id_{[n]}\right)\right\|_{V,j,n}+\left\|A\circ\left(\gamma\times id_{[n]}\right)\right\|_{V,m-j,n}.\\[12pt]
\end{array}\]

In the case $k<n$, an identical calculation shows that
\[
\|A\|_{V,m,n}\leq\left\|A\circ\left(id_{[m]}\times\beta\right)\right\|_{V,m,k}+\left\|A\circ\left(id_{[m]}\times\delta\right)\right\|_{V,m,n-k}
\]
for $\delta:[n-k]\to[n]$ one-to-one satisfying $\Ran(\beta)\cap\Ran(\delta)=\emptyset$.
\end{ex}

This interplay between the norms is the core notion for the main definition.  However, since an arbitrary set need not have an action of $\mathbb{C}$ upon it, matrix multiplication will be replaced with function composition.

\begin{defn}
For a set $X$, an \emph{array-weight} on $X$ is a net $\left(w_{X,m,n}\right)_{m,n\in\mathbb{N}}$ such that
\begin{enumerate}
\item $w_{X,m,n}:M_{m,n}(X)\to[0,\infty)$ for all $m,n\in\mathbb{N}$,
\item $w_{X,j,k}\left(A\circ(\alpha\times\beta)\right)\leq w_{X,m,n}(A)$ for all $1\leq j\leq m$, $1\leq k\leq n$, $A\in M_{m,n}(X)$, and one-to-one functions $\alpha:[j]\to[m]$ and $\beta:[k]\to[n]$,
\item $w_{X,m,n}(A)\leq w_{X,j,n}\left(A\circ\left(\alpha\times id_{[n]}\right)\right)+w_{X,m-j,n}\left(A\circ\left(\gamma\times id_{[n]}\right)\right)$ for all $1\leq j<m$ and one-to-one functions $\alpha:[j]\to[m]$ and $\gamma:[m-j]\to[m]$ satisfying $\Ran(\alpha)\cap\Ran(\gamma)=\emptyset$,
\item $w_{X,m,n}(A)\leq w_{X,m,k}\left(A\circ\left(id_{[m]}\times\beta\right)\right)+w_{X,m,n-k}\left(A\circ\left(id_{[m]}\times\delta\right)\right)$ for all $1\leq k<n$ and one-to-one functions $\beta:[k]\to[n]$ and $\delta:[n-k]\to[n]$ satisfying $\Ran(\beta)\cap\Ran(\delta)=\emptyset$.
\end{enumerate}
A set equipped with such an array-weight is an \emph{array-weighted set}.
\end{defn}

By Example \ref{motivating}, every matrix-normed space is an array-weighted set when stripped of its linear structure.  Similarly, maps between array-weighted sets are motivated by those between matrix-normed spaces.

\begin{defn}
Given two array-weighted sets $X$ and $Y$, a function $\phi:X\to Y$ is \emph{completely bounded} if there is $L\geq 0$ such that for all $m,n\in\mathbb{N}$ and $A\in M_{m,n}(X)$, $w_{Y,m,n}\left(M_{m,n}(\phi)(A)\right)\leq L\cdot w_{X,m,n}(A)$.  Let
\[
\cbnd(\phi):=\inf\left\{L\in[0,\infty):\begin{array}{c}w_{Y,m,n}\left(M_{m,n}(\phi)(A)\right)\leq L\cdot w_{X,m,n}(A)\\ \forall m,n\in\mathbb{N},A\in M_{m,n}(X)\end{array}\right\},
\]
the \emph{complete bound constant} of $\phi$.  If $\cbnd(\phi)\leq 1$, $\phi$ is \emph{completely contractive}.
\end{defn}

Adaptions of the usual functional analysis proofs yield the following foundational results.

\begin{prop}[Complete-boundedness criteria]\label{complete-bound}
Given array-weighted sets $X$ and $Y$, consider a function $\phi:X\to Y$.  The following are equivalent:
\begin{enumerate}
\item the function $\phi$ is completely bounded;
\item the ampliated function $M_{m,n}(\phi):M_{m,n}(X)\to M_{m,n}(Y)$ is bounded for all $m,n\in\mathbb{N}$ and
\[
\sup\left\{\bnd\left(M_{m,n}(\phi)\right):m,n\in\mathbb{N}\right\}<\infty;
\]
\item $w_{Y,m,n}\left(M_{m,n}(\phi)(A)\right)=0$ for all $m,n\in\mathbb{N}$ and $A\in M_{m,n}(X)$ satisfying $w_{X,m,n}(A)=0$, and
\[
\sup\left(\left\{\frac{w_{Y,m,n}\left(M_{m,n}(\phi)(A)\right)}{w_{X,m,n}(A)}:\begin{array}{c}m,n\in\mathbb{N},A\in M_{m,n}(X),\\ w_{X,m,n}(A)\neq 0\end{array}\right\}\cup\{0\}\right)<\infty.
\]
\end{enumerate}
In this case, $\cbnd(\phi)$ agrees with both suprema and
\[
w_{Y,m,n}\left(M_{m,n}(\phi)(A)\right)\leq \cbnd(\phi)w_{X,m,n}(A)
\]
for all $m,n\in\mathbb{N}$ and $A\in M_{m,n}(X)$
\end{prop}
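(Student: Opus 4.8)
The plan is to prove the cycle of implications $(1)\Rightarrow(2)\Rightarrow(3)\Rightarrow(1)$, together with the statement about the suprema, by mimicking the classical boundedness arguments for linear operators.

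First I would show $(1)\Rightarrow(2)$. If $\phi$ is completely bounded with witnessing constant $L$, then for each fixed $m,n$ and every $A\in M_{m,n}(X)$ we have $w_{Y,m,n}(M_{m,n}(\phi)(A))\le L\cdot w_{X,m,n}(A)$, which is exactly the statement that $M_{m,n}(\phi)$ is bounded (in the weighted-set sense) with $\bnd(M_{m,n}(\phi))\le L$. Taking the supremum over $m,n$ gives $\sup_{m,n}\bnd(M_{m,n}(\phi))\le L<\infty$. Taking the infimum over all admissible $L$ shows $\sup_{m,n}\bnd(M_{m,n}(\phi))\le\cbnd(\phi)$.

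Next, $(2)\Rightarrow(3)$. Set $K:=\sup_{m,n}\bnd(M_{m,n}(\phi))<\infty$. Fix $m,n$ and $A\in M_{m,n}(X)$. Since $\bnd(M_{m,n}(\phi))\le K$, the definition of the bound constant for weighted sets gives $w_{Y,m,n}(M_{m,n}(\phi)(A))\le K\cdot w_{X,m,n}(A)$ for \emph{every} $A$ (this is the ``moreover'' clause of the weighted-set boundedness definition, applied at the fixed level $(m,n)$). In particular, if $w_{X,m,n}(A)=0$ then $w_{Y,m,n}(M_{m,n}(\phi)(A))=0$, and if $w_{X,m,n}(A)\neq0$ then the quotient $w_{Y,m,n}(M_{m,n}(\phi)(A))/w_{X,m,n}(A)\le K$. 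Hence the supremum in $(3)$ is bounded by $K$, and so is finite; moreover it is $\le\sup_{m,n}\bnd(M_{m,n}(\phi))$.

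Finally, $(3)\Rightarrow(1)$. Let $M$ denote the supremum appearing in $(3)$. I claim $w_{Y,m,n}(M_{m,n}(\phi)(A))\le M\cdot w_{X,m,n}(A)$ for all $m,n$ and all $A$: if $w_{X,m,n}(A)\neq0$ this is immediate from the definition of $M$ as a supremum of quotients, and if $w_{X,m,n}(A)=0$ it is the first hypothesis in $(3)$ (both sides are $0$). Thus $\phi$ is completely bounded with $\cbnd(\phi)\le M$. Combining the three inequalities $\sup_{m,n}\bnd(M_{m,n}(\phi))\le\cbnd(\phi)$, $M\le\sup_{m,n}\bnd(M_{m,n}(\phi))$, and $\cbnd(\phi)\le M$ forces all three quantities to be equal, and the displayed inequality $w_{Y,m,n}(M_{m,n}(\phi)(A))\le\cbnd(\phi)\,w_{X,m,n}(A)$ is the claim just proved with $M$ replaced by its equal $\cbnd(\phi)$. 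The only mild subtlety — the ``main obstacle'' — is the careful handling of the zero-weight arrays: one cannot divide, so the argument must everywhere split into the cases $w_{X,m,n}(A)=0$ and $w_{X,m,n}(A)\neq0$, and one must invoke the exact form of the weighted-set bound-constant definition (namely that the infimum-defining inequality actually holds with $L$ equal to the infimum) rather than just its existence. None of the array-weight axioms (2)–(4) are needed; the result is purely a reorganization of the definitions level by level.
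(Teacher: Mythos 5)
Your argument is correct, and it is exactly the level-by-level bookkeeping argument the paper has in mind when it says these results follow by ``adaptions of the usual functional analysis proofs'' (the paper gives no further details). The one subtlety you flag --- that the bound-constant infimum is itself a witnessing constant, so zero-weight arrays must map to zero-weight arrays --- is handled correctly, so nothing is missing.
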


\begin{cor}[Composition]
Let $X$, $Y$, and $Z$ be array-weighted sets and $\phi:X\to Y$ and $\psi:Y\to Z$ be completely bounded functions.  Then, $\psi\circ\phi:X\to Z$ is completely bounded and
\[
\cbnd(\psi\circ\phi)\leq\cbnd(\psi)\cbnd(\phi).
\]
If $\phi$ and $\psi$ are completely contractive, so is $\psi\circ\phi$.
\end{cor}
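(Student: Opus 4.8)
The plan is to verify the two conclusions — complete-boundedness of $\psi\circ\phi$ together with the constant estimate, and preservation of complete contractivity — directly from the definitions, leaning on the characterizing inequality in Proposition \ref{complete-bound}. First I would note the elementary functorial identity $M_{m,n}(\psi\circ\phi)=M_{m,n}(\psi)\circ M_{m,n}(\phi)$ for every $m,n\in\mathbb{N}$, which is immediate from the entrywise formula $M_{m,n}(\phi)(A)(j,k)=\phi(A(j,k))$ in Definition \ref{matrix-notation}: applying $\psi$ then $\phi$ entrywise is the same as applying $\psi\circ\phi$ entrywise. This reduces the whole matter to a single chain of inequalities at a fixed matrix level.

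Next, fix $m,n\in\mathbb{N}$ and $A\in M_{m,n}(X)$. Using the last displayed inequality of Proposition \ref{complete-bound} — namely $w_{Y,m,n}(M_{m,n}(\phi)(B))\le\cbnd(\phi)\,w_{X,m,n}(B)$ for all $B$, and the analogous statement for $\psi$ — I would estimate
\[
w_{Z,m,n}\bigl(M_{m,n}(\psi\circ\phi)(A)\bigr)
=w_{Z,m,n}\Bigl(M_{m,n}(\psi)\bigl(M_{m,n}(\phi)(A)\bigr)\Bigr)
\le\cbnd(\psi)\,w_{Y,m,n}\bigl(M_{m,n}(\phi)(A)\bigr)
\le\cbnd(\psi)\cbnd(\phi)\,w_{X,m,n}(A).
\]
Since $m,n$ and $A$ were arbitrary, this exhibits $L:=\cbnd(\psi)\cbnd(\phi)$ as an admissible constant for $\psi\circ\phi$, so $\psi\circ\phi$ is completely bounded and, by definition of $\cbnd$ as an infimum over admissible constants, $\cbnd(\psi\circ\phi)\le\cbnd(\psi)\cbnd(\phi)$.

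Finally, for the contractive case, if $\cbnd(\phi)\le1$ and $\cbnd(\psi)\le1$, then $\cbnd(\psi\circ\phi)\le\cbnd(\psi)\cbnd(\phi)\le1$, so $\psi\circ\phi$ is completely contractive. I do not anticipate a genuine obstacle here; the only point requiring a moment's care is the justification that the infimum defining $\cbnd$ is actually attained as a bound (i.e. that the ``optimal'' constant itself works), but that is exactly the content of the final clause of Proposition \ref{complete-bound}, which I am entitled to invoke. This mirrors the classical argument that the composite of bounded operators is bounded with $\|\psi\phi\|\le\|\psi\|\,\|\phi\|$, transported verbatim to the array-weighted setting.
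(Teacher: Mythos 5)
Your argument is correct and is exactly the standard functional-analysis composition argument the paper itself invokes (it offers no explicit proof, only the remark that the usual proofs adapt): functoriality $M_{m,n}(\psi\circ\phi)=M_{m,n}(\psi)\circ M_{m,n}(\phi)$ plus the attained-bound clause of Proposition \ref{complete-bound} chained twice, then the infimum characterization of $\cbnd$. Nothing further is needed.
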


\subsection{Maximum and Minimum Array-Weight Structures}\label{max-min}

Given an array-weighted set $X$, $X$ is a weighted set when stripped of all its weight functions, except for the underlying weight function on $M_{1,1}(X)\cong X$.  Given a weighted set, the weight function can be extended to an array-weight in two extremal ways, just as with matrix-normed spaces in Examples \ref{MIN-definition} and \ref{AMAX-definition}.

\begin{defn}[Minimum array-weight structure]
Given a weighted set $S$, let $\mA(S)$ be $S$ equipped with the weight functions
\[
w_{\mA(S),m,n}(A):=\max\left\{w_{S}\left(A(j,k)\right):1\leq j\leq m,1\leq k\leq n\right\},
\]
the maximum weight of an entry in $A$.
\end{defn}

\begin{defn}[Maximum array-weight structure]
Given a weighted set $S$, let $\MA(S)$ be $S$ equipped with the weight functions
\[
w_{\MA(S),m,n}(A):=\sum_{j=1}^{m}\sum_{k=1}^{n}w_{S}\left(A(j,k)\right),
\]
the sum of the weights of the entries in $A$.
\end{defn}

Routine calculations show that each of these nets of weight functions constitute array-weights on $S$ and $w_{\mA(S),1,1}(s)=w_{\MA(S),1,1}(s)=w_{S}(s)$ for all $s\in S$.  Moreover, $\mA$ and $\MA$ are, respectively, the least and greatest array-weight that agree with the original weight function.  The proofs of these two facts follow from inductive use of the definition of an array-weight and reflect the proof of \cite[Proposition 2.1]{ruan1988}.  As a direct result of this optimality, $\mA$ and $\MA$ have the following universal properties, reflecting the universal properties of $\MIN$ and $\AMAX$.

\begin{thm}[Universal property of $\mA$]
Let $\AWSetB$ be the category of array-weighted sets with completely bounded maps, and $F_{\AWSetB}^{\WSetB}:\AWSetB\to\WSetB$ the forgetful functor stripping all weight functions except the underlying weight function.  For a weighted set $S$ and an array-weighted set $Y$, consider a bounded function $\phi:F_{\AWSetB}^{\WSetB}(Y)\to S$.  Then, there is a unique completely bounded map $\hat{\phi}:Y\to\mA(S)$ such that $F_{\AWSetB}^{\WSetB}\left(\hat{\phi}\right)=\phi$.  Moreover,
\[
\cbnd\left(\hat{\phi}\right)=\bnd(\phi).
\]
\end{thm}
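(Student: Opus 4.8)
The plan is to mirror the standard proof of the universal property of $\MIN$ (Theorem \ref{MIN-prop}), transported into the purely set-theoretic setting via the complete-boundedness criteria of Proposition \ref{complete-bound}. The underlying weighted set of $\mA(S)$ is $S$ itself, so the candidate for $\hat{\phi}$ is simply $\phi$ regarded as a function $Y \to \mA(S)$; uniqueness is then automatic, since any $\hat{\phi}$ with $F_{\AWSetB}^{\WSetB}(\hat{\phi}) = \phi$ must equal $\phi$ as a function of sets. The real content is therefore (a) that this $\hat{\phi} = \phi$ is actually completely bounded as a map of array-weighted sets, and (b) the norm equality $\cbnd(\hat{\phi}) = \bnd(\phi)$.

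For complete-boundedness I would use criterion (2) of Proposition \ref{complete-bound}: fix $m,n$ and an array $A \in M_{m,n}(Y)$, and estimate $w_{\mA(S),m,n}(M_{m,n}(\hat{\phi})(A))$. By the definition of $\mA$, this is $\max\{ w_S(\phi(A(j,k))) : 1\le j\le m,\ 1\le k\le n\}$. Since $\phi : F_{\AWSetB}^{\WSetB}(Y) \to S$ is bounded with bound constant $\bnd(\phi)$, each term satisfies $w_S(\phi(A(j,k))) \le \bnd(\phi)\, w_{F_{\AWSetB}^{\WSetB}(Y)}(A(j,k)) = \bnd(\phi)\, w_{Y,1,1}(A(j,k))$. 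Now each entry $A(j,k)$ equals $A \circ (\alpha_j \times \beta_k)$ where $\alpha_j : [1]\to[m]$ and $\beta_k : [1]\to[n]$ pick out the indices $j$ and $k$; hence axiom (2) of an array-weight gives $w_{Y,1,1}(A(j,k)) \le w_{Y,m,n}(A)$. Taking the maximum over $j,k$ yields $w_{\mA(S),m,n}(M_{m,n}(\hat{\phi})(A)) \le \bnd(\phi)\, w_{Y,m,n}(A)$, so $\hat{\phi}$ is completely bounded with $\cbnd(\hat{\phi}) \le \bnd(\phi)$.

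For the reverse inequality, restrict to the $1,1$ level: for $s \in Y$, $w_{\mA(S),1,1}(\hat{\phi}(s)) = w_S(\phi(s))$ and $w_{Y,1,1}(s) = w_{F_{\AWSetB}^{\WSetB}(Y)}(s)$, so the complete bound constant already forces $w_S(\phi(s)) \le \cbnd(\hat{\phi})\, w_{F_{\AWSetB}^{\WSetB}(Y)}(s)$ for all $s$, whence $\bnd(\phi) \le \cbnd(\hat{\phi})$. Combining the two gives $\cbnd(\hat{\phi}) = \bnd(\phi)$, and in particular when $\phi$ is contractive so is $\hat{\phi}$ (so the statement is compatible with a $\AWSetC$/$\WSetC$ version as well).

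I do not anticipate a genuine obstacle here; the only point requiring a little care is the bookkeeping identifying a single entry $A(j,k)$ as a compression $A \circ (\alpha_j \times \beta_k)$ so that array-weight axiom (2) applies — this is exactly the $j=k=1$ instance of the $U_\alpha^* A U_\beta$ computation in Example \ref{motivating}. Everything else is the formal left-adjoint boilerplate: well-definedness and uniqueness of $\hat{\phi}$ are immediate because the forgetful functor is faithful and the underlying set is unchanged, so the proof is essentially the verification of the two norm inequalities above.
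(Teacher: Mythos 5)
Your proposal is correct and is essentially the argument the paper intends: the paper gives no details, simply remarking that the proof is "nearly identical" to Paulsen's Exercise 14.1, and your direct verification—taking $\hat{\phi}=\phi$ on underlying sets (so uniqueness is automatic), bounding each entry weight via $w_{Y,1,1}(A(j,k))=w_{Y,1,1}\left(A\circ(\alpha_{j}\times\beta_{k})\right)\leq w_{Y,m,n}(A)$ from axiom (2) of an array-weight, taking the maximum to get $\cbnd\left(\hat{\phi}\right)\leq\bnd(\phi)$, and restricting to the $1,1$ level for the reverse inequality—is exactly that standard argument transported to $\mA$. No gaps.
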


\begin{thm}[Universal property of $\MA$]\label{MA-prop}
For a weighted set $S$ and an array-weighted set $Y$, consider a bounded function $\phi:S\to F_{\AWSetB}^{\WSetB}(Y)$.  Then, there is a unique completely bounded map $\hat{\phi}:\MA(S)\to Y$ such that $F_{\AWSetB}^{\WSetB}\left(\hat{\phi}\right)=\phi$.  Moreover,
\[
\cbnd\left(\hat{\phi}\right)=\bnd(\phi).
\]
\end{thm}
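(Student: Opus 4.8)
The plan is to mimic the proof of the universal property of $\AMAX$ (Theorem \ref{AMAX-prop}), with matrix-norms replaced by array-weights and the projective tensor norm on $V \otimes \mathbb{M}_{n,m}^{*}$ replaced by the explicit block-sum formula defining $\MA$. Since $\MA(S)$ and $S$ share the same underlying set, the candidate lifting $\hat\phi : \MA(S) \to Y$ must equal $\phi$ as a function; uniqueness is therefore immediate once we check $F_{\AWSetB}^{\WSetB}(\hat\phi) = \phi$, since that condition forces the function to be $\phi$. So the only real work is to verify that $\phi$, viewed as a map $\MA(S) \to Y$, is completely bounded with $\cbnd(\phi) = \bnd(\phi)$.

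First I would record the trivial inequality $\bnd(\phi) \le \cbnd(\hat\phi)$: applying the complete-boundedness estimate at the level $m=n=1$ and using $w_{\MA(S),1,1} = w_S$ and the identification $M_{1,1}(Y) \cong F_{\AWSetB}^{\WSetB}(Y)$ gives $w_Y\big(\phi(s)\big) \le \cbnd(\hat\phi)\, w_S(s)$ for all $s \in S$, hence $\bnd(\phi) \le \cbnd(\hat\phi)$. For the reverse direction, fix $m,n \in \mathbb{N}$ and $A \in M_{m,n}(S)$. The key estimate is to peel off the entries of $A$ one at a time using array-weight axioms (3) and (4) on the target $Y$: writing $e_{j,k}$ for the matrix in $M_{m,n}(S)$ that agrees with $A$ in the $(j,k)$-entry and is supported there (after restricting via the coordinate inclusions $\alpha : [1] \to [m]$, $\beta : [1] \to [n]$), an inductive application of axioms (3)--(4) to $M_{m,n}(\phi)(A)$ yields
\[
w_{Y,m,n}\big(M_{m,n}(\phi)(A)\big) \le \sum_{j=1}^{m}\sum_{k=1}^{n} w_{Y,1,1}\big(\phi(A(j,k))\big).
\]
Then each summand is at most $\bnd(\phi)\, w_S(A(j,k))$ by definition of $\bnd(\phi)$, and summing gives $w_{Y,m,n}\big(M_{m,n}(\phi)(A)\big) \le \bnd(\phi) \sum_{j,k} w_S(A(j,k)) = \bnd(\phi)\, w_{\MA(S),m,n}(A)$. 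This shows $\phi$ is completely bounded as a map $\MA(S) \to Y$ with $\cbnd(\phi) \le \bnd(\phi)$, and combined with the previous paragraph (applied to $\hat\phi = \phi$) gives equality.

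The step I expect to require the most care is the inductive ``peeling'' of entries: axioms (3) and (4) only split off a single row or column at a time, and one must check that restricting $M_{m,n}(\phi)(A)$ to a submatrix commutes with applying $M_{m,n}(\phi)$ entrywise (which it does, since ampliation is defined entrywise and precomposition with $\alpha \times \beta$ is just reindexing), and then set up a clean double induction on $m$ and $n$ — first reduce to the case $m = 1$ by repeatedly applying axiom (3) to split off rows, then reduce the single-row case to $m = n = 1$ by repeatedly applying axiom (4) to split off columns. One subtlety is handling the degenerate entries where $w_S(A(j,k)) = 0$: the third equivalent condition in Proposition \ref{complete-bound} already tells us that $w_{\MA(S),m,n}(A) = 0$ forces every entry weight to vanish, so the bound $w_{Y,1,1}(\phi(A(j,k))) \le \bnd(\phi) \cdot 0 = 0$ holds there too, and no separate argument is needed. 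Finally, uniqueness and the identity $F_{\AWSetB}^{\WSetB}(\hat\phi) = \phi$ are formal, exactly as in the metric-completion and $\AMAX$ cases.
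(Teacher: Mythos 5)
Your proposal is correct and is essentially the argument the paper intends: since $\MA(S)$ and $S$ share the same underlying set, the lift must be $\phi$ itself, and the row/column peeling via array-weight axioms (3)--(4) is exactly the ``maximality of $\MA$'' estimate $w_{Y,m,n}\left(M_{m,n}(\phi)(A)\right)\leq\sum_{j,k}w_{Y,1,1}\left(\phi(A(j,k))\right)$ that the paper invokes (deferring, as for $\AMAX$, to the Paulsen Exercise 14.1 pattern), with the level-$(1,1)$ comparison giving the reverse inequality $\bnd(\phi)\leq\cbnd\left(\hat{\phi}\right)$.
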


The proof of both theorems is nearly identical to the proof in \cite[Exercise 14.1]{paulsen}.

\subsection{Appending a Zero-Weight Element}\label{zero-weight}

As with weighted sets, an array-weighted set need not have an element of weight 0.  In the weighted set case, one need only append a new element and extend the weight function for the new element to have weight 0 as in \cite[p.\ 7]{grandis2004}.  However, an array-weighted set has a net of weights that must be extended while preserving the existing relations between them.  Since this construction is the prototype for the array-weight structure of a disjoint union, appending a zero-weight element will be shown in detail.

\begin{defn}[Minimally appending a zero-weight element]
Given an array-weighted set $X$, let $Z(X):=X\uplus\{\Theta\}$, the disjoint union of $X$ with a distinguished singleton $\Theta$, which will be the zero-weight element.  For $m,n\in\mathbb{N}$, define $w_{Z(X),m,n}:M_{m,n}(Z(X))\to[-\infty,\infty]$ by the supremum below.
\[
w_{Z(X),m,n}(A):=\sup\left(\left\{w_{X,j,k}\left(A\circ(\sigma\times\tau)\right):\begin{array}{l}\xymatrix{[j]\ar[r]^{\sigma} & [m]}\\ \textrm{ and }\\\xymatrix{[k]\ar[r]^{\tau} & [n]}\\\textrm{one-to-one,}\\A\circ(\sigma\times\tau)\in M_{j,k}(X)\end{array}\right\}\cup\{0\}\right)
\]
\end{defn}

\begin{lem}
Equipped with the above functions, $Z(X)$ is an array-weighted set such that $w_{Z(X),m,n}(A)=w_{X,m,n}(A)$ for all $m,n\in\mathbb{N}$ and $A\in M_{m,n}(X)$.
\end{lem}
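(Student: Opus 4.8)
The proof has two halves: first show that $w_{Z(X),m,n}$ actually takes values in $[0,\infty)$ rather than $[-\infty,\infty]$, and that it restricts to $w_{X,m,n}$ on $M_{m,n}(X)$; then verify the four array-weight axioms. I would begin by observing that for $A \in M_{m,n}(Z(X))$ the supremum is taken over a nonempty set (it contains $0$), and that each $w_{X,j,k}(A\circ(\sigma\times\tau))$ is finite; finiteness of the supremum follows because there are only finitely many one-to-one pairs $(\sigma,\tau)$ with $j\le m$, $k\le n$, so the supremum is really a finite maximum. Hence $w_{Z(X),m,n}(A) \in [0,\infty)$. For the restriction claim, if $A \in M_{m,n}(X)$ then taking $\sigma = id_{[m]}$, $\tau = id_{[n]}$ shows $w_{Z(X),m,n}(A) \ge w_{X,m,n}(A)$; conversely, for any one-to-one $\sigma,\tau$ with $A\circ(\sigma\times\tau)\in M_{j,k}(X)$, axiom (2) for the array-weight on $X$ gives $w_{X,j,k}(A\circ(\sigma\times\tau)) \le w_{X,m,n}(A)$, and $0 \le w_{X,m,n}(A)$, so the supremum is bounded above by $w_{X,m,n}(A)$.

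\medskip

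**Verifying the axioms.** Axiom (1) is the finiteness just established. Axiom (2) is essentially automatic from the definition: if $\alpha:[p]\to[m]$ and $\beta:[q]\to[n]$ are one-to-one, then any one-to-one $\sigma:[j]\to[p]$, $\tau:[k]\to[q]$ with $(A\circ(\alpha\times\beta))\circ(\sigma\times\tau) = A\circ((\alpha\sigma)\times(\beta\tau))$ landing in $M_{j,k}(X)$ corresponds to a competitor $(\alpha\sigma,\beta\tau)$ in the supremum defining $w_{Z(X),m,n}(A)$; hence $w_{Z(X),p,q}(A\circ(\alpha\times\beta)) \le w_{Z(X),m,n}(A)$. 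For axioms (3) and (4) (the block-subadditivity axioms), I would argue as follows for (3), the case (4) being symmetric. Suppose $\alpha:[p]\to[m]$, $\gamma:[m-p]\to[m]$ are one-to-one with disjoint ranges. Fix $\sigma:[j]\to[m]$, $\tau:[k]\to[n]$ one-to-one with $A\circ(\sigma\times\tau)\in M_{j,k}(X)$; I must bound $w_{X,j,k}(A\circ(\sigma\times\tau))$ by the sum $w_{Z(X),p,n}(A\circ(\alpha\times id)) + w_{Z(X),m-p,n}(A\circ(\gamma\times id))$. Since $\Ran(\alpha)$ and $\Ran(\gamma)$ partition $[m]$, the rows of $\sigma$ split: let $J_1 = \sigma^{-1}(\Ran(\alpha))$ and $J_2 = \sigma^{-1}(\Ran(\gamma))$, which partition $[j]$, say $|J_1| = j_1$. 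Enumerate $J_1$ and $J_2$ to get one-to-one maps onto $[j_1]$ and $[j_2]$, and correspondingly factor $\sigma|_{J_1} = \alpha\circ\sigma_1$ and $\sigma|_{J_2} = \gamma\circ\sigma_2$ for suitable one-to-one $\sigma_1:[j_1]\to[p]$, $\sigma_2:[j_2]\to[m-p]$. Then applying axiom (3) for the array-weight on $X$ to the matrix $A\circ(\sigma\times\tau)$ with this row-split, followed by recognizing each piece as $A\circ((\alpha\sigma_1)\times\tau)$ and $A\circ((\gamma\sigma_2)\times\tau)$ — which are competitors in the suprema defining $w_{Z(X),p,n}(A\circ(\alpha\times id))$ and $w_{Z(X),m-p,n}(A\circ(\gamma\times id))$ respectively — yields the bound. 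Taking the supremum over $(\sigma,\tau)$ on the left (and noting $0$ is dominated by the right side) finishes axiom (3).

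\medskip

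**The main obstacle.** The genuinely delicate point is the row-splitting argument for axioms (3) and (4): one must confirm that when the entries of $A\circ(\sigma\times\tau)$ all lie in $X$ (i.e., no $\Theta$ appears), the two sub-blocks $A\circ((\alpha\sigma_1)\times\tau)$ and $A\circ((\gamma\sigma_2)\times\tau)$ likewise have all entries in $X$, so that they are legitimate competitors in the defining suprema — this is immediate since their entries are a subset of the entries of $A\circ(\sigma\times\tau)$. One should also handle the degenerate edge cases where $J_1$ or $J_2$ is empty (then one term is a supremum over the empty index collection, hence equals $0$, and the inequality still holds). The remaining care is purely bookkeeping with the enumerations, matching the statement of axiom (3) for $X$ to the chosen factorizations; I would present this compactly rather than writing every index.
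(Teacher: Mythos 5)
Your proposal is correct and follows essentially the same route as the paper: nonnegativity and finiteness first, axiom (2) by composing injections, axioms (3)--(4) by splitting the rows of a competitor $A\circ(\sigma\times\tau)$ along $\sigma^{-1}(\Ran(\alpha))$ and $\sigma^{-1}(\Ran(\gamma))$ with the degenerate cases treated separately, and the restriction claim via the identity maps together with axiom (2) for $X$. The only cosmetic difference is the finiteness step: you note the supremum runs over finitely many competitors and is thus a maximum of finite values, whereas the paper bounds each competitor by the $\MA$-weight (the sum of the weights of the entries of $A$ lying in $X$); both arguments are valid.
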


\begin{proof}

Fix $m,n\in\mathbb{N}$ and $A\in M_{m,n}(Z(X))$.  From definition, $w_{Z(X),m,n}(A)\geq 0$, so next, the weight of $A$ is shown to be finite.  Given integers $1\leq j\leq m$ and $1\leq k\leq n$, let $\sigma:[j]\to[m]$ and $\tau:[k]\to[n]$ be one-to-one such that $A\circ(\sigma\times\tau)\in M_{j,k}(X)$.  Using the maximality of $\MA$,
\[\begin{array}{rcl}
w_{X,j,k}\left(A\circ(\sigma\times\tau)\right)
&	\leq	&	w_{\MA\left(F_{\AWSetB}^{\WSetB}(X)\right),j,k}\left(A\circ(\sigma\times\tau)\right)\\[12pt]
&	=	&	\sum_{a=1}^{j}\sum_{b=1}^{k}w_{F_{\AWSetB}^{\WSetB}(X)}\left(A(\sigma(a),\tau(b))\right)\\[12pt]
&	\leq	&	\sum_{A(a,b)\in X}w_{F_{\AWSetB}^{\WSetB}(X)}\left(A(a,b)\right),\\[12pt]
\end{array}\]
so a supremum yields
\[
w_{Z(X),m,n}(A)\leq\sum_{A(a,b)\in X}w_{F_{\AWSetB}^{\WSetB}(X)}\left(A(a,b)\right)<\infty.
\]

Next, the weight of $A$ is shown to bound the weights of its subarrays.  For integers $1\leq j\leq m$ and $1\leq k\leq n$, let $\alpha:[j]\to[m]$ and $\beta:[k]\to[n]$ be one-to-one.  For integers $1\leq a\leq j$ and $1\leq b\leq k$, consider one-to-one functions $\sigma:[a]\to[j]$ and $\tau:[b]\to[k]$ such that $A\circ(\alpha\times\beta)\circ(\sigma\times\tau)\in M_{a,b}(X)$.  Note that $(\alpha\times\beta)\circ(\sigma\times\tau)=(\alpha\circ\sigma)\times(\beta\circ\tau)$ is one-to-one as well, so
\[
w_{X,a,b}\left(A\circ(\alpha\times\beta)\circ(\sigma\times\tau)\right)\leq w_{Z(X),m,n}(A).
\]
A supremum then shows that
\[
w_{Z(X),j,k}\left(A\circ(\alpha\times\beta)\right)\leq w_{Z(X),m,n}(A).
\]

Finally, the weight of $A$ is shown to be bounded by the sum of its blocks.  For an integer $1\leq j<m$, let $\alpha:[j]\to[m]$ and $\gamma:[m-j]\to[m]$ be one-to-one such that $\Ran(\alpha)\cap\Ran(\gamma)=\emptyset$.  For integers $1\leq a\leq m$ and $1\leq b\leq n$, consider one-to-one functions $\sigma:[a]\to[m]$ and $\tau:[b]\to[n]$ such that $A\circ(\sigma\times\tau)\in M_{a,b}(X)$.  The range of $\sigma$ could easily intersect with the ranges of both $\alpha$ and $\gamma$, so this entanglement will be handled by constructing two auxiliary functions.

In the case that $\sigma^{-1}(\Ran(\alpha))$ and $\sigma^{-1}(\Ran(\gamma))$ are both nonempty, enumerate each as $\left(s_{l}\right)_{l=1}^{c}$ and $\left(t_{p}\right)_{p=1}^{a-c}$, respectively.  Define $\hat{\alpha}:[c]\to[a]$ by $\hat{\alpha}(l):=s_{l}$ and $\hat{\gamma}:[a-c]\to[a]$ by $\hat{\gamma}(p):=t_{p}$.  Likewise, define $\sigma_{1}:[c]\to[j]$ by $\sigma_{1}(l):=\alpha^{-1}\left(\sigma\left(\hat{\alpha}(l)\right)\right)$ and $\sigma_{2}:[a-c]\to[m-j]$ by $\sigma_{2}(p):=\gamma^{-1}\left(\sigma\left(\hat{\gamma}(p)\right)\right)$.  By design, the following two squares commute in $\Set$.
\[\begin{array}{cc}
\xymatrix{
[c]\ar[r]^{\hat{\alpha}}	\ar[d]_{\sigma_{1}}&	[a]\ar[d]^{\sigma}\\
[j]\ar[r]_{\alpha}	&	[m]\\
}
&
\xymatrix{
[a-c]\ar[r]^(0.6){\hat{\gamma}}\ar[d]_{\sigma_{2}}	&	[a]\ar[d]^{\sigma}\\
[m-j]\ar[r]_(0.6){\gamma}	&	[m]\\
}
\end{array}\]
Notice also that $\Ran\left(\hat{\alpha}\right)\cap\Ran\left(\hat{\gamma}\right)=\emptyset$.  Then,
\[\begin{array}{rcl}
w_{X,a,b}\left(A\circ(\sigma\times\tau)\right)

&	\leq	&	w_{X,c,b}\left(A\circ(\sigma\times\tau)\circ\left(\hat{\alpha}\times id_{[b]}\right)\right)\\[6pt]
&	&	+w_{X,a-c,b}\left(A\circ(\sigma\times\tau)\circ\left(\hat{\gamma}\times id_{[b]}\right)\right)\\[12pt]

&	=	&	w_{X,c,b}\left(A\circ\left(\left(\sigma\circ\hat{\alpha}\right)\times\left(\tau\circ id_{[b]}\right)\right)\right)\\[6pt]
&	&	+w_{X,a-c,b}\left(A\circ\left(\left(\sigma\circ\hat{\gamma}\right)\times\left(\tau\circ id_{[b]}\right)\right)\right)\\[12pt]

&	=	&	w_{X,c,b}\left(A\circ\left(\left(\alpha\circ\sigma_{1}\right)\times\left(id_{[n]}\circ\tau\right)\right)\right)\\[6pt]
&	&	+w_{X,a-c,b}\left(A\circ\left(\left(\gamma\circ\sigma_{2}\right)\times\left(id_{[n]}\circ\tau\right)\right)\right)\\[12pt]

&	=	&	w_{X,c,b}\left(A\circ(\alpha\times id_{[n]})\circ\left(\sigma_{1}\times\tau\right)\right)\\[6pt]
&	&	+w_{X,a-c,b}\left(A\circ(\gamma\times id_{[n]})\circ\left(\sigma_{2}\times\tau\right)\right)\\[12pt]

&	\leq	&	w_{Z(X),j,n}\left(A\circ(\alpha\times id_{[n]})\right)\\[6pt]
&	&	+w_{Z(X),m-j,n}\left(A\circ(\gamma\times id_{[n]})\right).\\[12pt]
\end{array}\]

In the case $\Ran(\sigma)\subseteq\Ran(\alpha)$, then $a\leq j$.  Define $\hat{\sigma}:[a]\to[j]$ by $\hat{\sigma}(l):=\alpha^{-1}(\sigma(l))$, which is one-to-one.  By design $\alpha\circ\hat{\sigma}=\sigma$.  Then,
\[\begin{array}{rcl}
w_{X,a,b}\left(A\circ(\sigma\times\tau)\right)
&	=	&	w_{X,a,b}\left(A\circ\left(\left(\alpha\circ\hat{\sigma}\right)\times\left(id_{[n]}\circ\tau\right)\right)\right)\\[6pt]
&	=	&	w_{X,a,b}\left(A\circ\left(\alpha\times id_{[n]}\right)\circ\left(\hat{\sigma}\times\tau\right)\right)\\[6pt]
&	\leq	&	w_{Z(X),j,n}\left(A\circ\left(\alpha\times id_{[n]}\right)\right)\\[6pt]
&	\leq	&	w_{Z(X),j,n}\left(A\circ\left(\alpha\times id_{[n]}\right)\right)\\[6pt]
&	&	+w_{Z(X),m-j,n}\left(A\circ(\gamma\times id_{[n]})\right).\\[12pt]
\end{array}\]
A similar calculation occurs in the case $\Ran(\sigma)\subseteq\Ran(\gamma)$.

Taking all three cases into account, a supremum then gives
\[
w_{Z(X),m,n}\left(A\right)\leq w_{Z(X),j,n}\left(A\circ(\alpha\times id_{[n]})\right)+w_{Z(X),m-j,n}\left(A\circ(\gamma\times id_{[n]})\right)
\]
as desired.  A similar argument shows the same result in the second coordinate.

In the case $A\in M_{m,n}(X)$,
\[
w_{X,j,k}(A\circ(\sigma\times\tau))\leq w_{X,m,n}(A)=w_{X,m,n}\left(A\circ\left(id_{[m]}\times id_{[n]}\right)\right)
\]
for all integers $1\leq j\leq m$, $1\leq k\leq n$, $A\in M_{m,n}(X)$, and one-to-one functions $\sigma:[j]\to[m]$ and $\tau:[k]\to[n]$.  Thus, $w_{Z(X),m,n}(A)=w_{X,m,n}(A)$.

\end{proof}

Note that $w_{Z(X),1,1}(\Theta)=0$, and moreover, this is the least such array-weight.

\begin{thm}[Minimality of $Z(X)$]\label{minimality-zx}
Let $\left(v_{m,n}\right)_{m,n\in\mathbb{N}}$ be another array-weight on $Z(X)$ such that $v_{m,n}(A)=w_{X,m,n}(A)$ for all $m,n\in\mathbb{N}$ and $A\in M_{m,n}(X)$.  Then, $v_{m,n}(A)\geq w_{Z(X),m,n}(A)$ for all $m,n\in\mathbb{N}$ and $A\in M_{m,n}(Z(X))$.
\end{thm}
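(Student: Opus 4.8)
The plan is to show that for any array-weight $\left(v_{m,n}\right)_{m,n\in\mathbb{N}}$ on $Z(X)$ that restricts to the original array-weight on matrices over $X$, and for any $A\in M_{m,n}(Z(X))$, every quantity appearing in the supremum defining $w_{Z(X),m,n}(A)$ is bounded above by $v_{m,n}(A)$. Since $w_{Z(X),m,n}(A)$ is the supremum of $\{0\}$ together with the numbers $w_{X,j,k}\left(A\circ(\sigma\times\tau)\right)$ over one-to-one $\sigma:[j]\to[m]$ and $\tau:[k]\to[n]$ with $A\circ(\sigma\times\tau)\in M_{j,k}(X)$, and since $v_{m,n}(A)\geq 0$ automatically, it suffices to prove $w_{X,j,k}\left(A\circ(\sigma\times\tau)\right)\leq v_{m,n}(A)$ for each such pair $(\sigma,\tau)$.

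So fix such $\sigma$ and $\tau$ with $A\circ(\sigma\times\tau)\in M_{j,k}(X)$. First I would rewrite using the hypothesis: since $A\circ(\sigma\times\tau)$ is a matrix over $X$, we have
\[
w_{X,j,k}\left(A\circ(\sigma\times\tau)\right)=v_{j,k}\left(A\circ(\sigma\times\tau)\right).
\]
Now I would apply axiom (2) of an array-weight directly to $v$: since $\sigma:[j]\to[m]$ and $\tau:[k]\to[n]$ are one-to-one, axiom (2) gives
\[
v_{j,k}\left(A\circ(\sigma\times\tau)\right)\leq v_{m,n}(A).
\]
Chaining these two relations yields $w_{X,j,k}\left(A\circ(\sigma\times\tau)\right)\leq v_{m,n}(A)$, as needed. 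Taking the supremum over all admissible $(\sigma,\tau)$ (and noting $0\leq v_{m,n}(A)$) gives $w_{Z(X),m,n}(A)\leq v_{m,n}(A)$.

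This argument is essentially immediate once the definitions are unwound — the only thing to be careful about is the bookkeeping: that $w_{Z(X),m,n}$ is genuinely built as a supremum of subarray-weights, that the restriction hypothesis on $v$ lets us replace $w_{X}$ by $v$ on those subarrays, and that axiom (2) of the array-weight $v$ (the subarray/compression bound) is exactly what controls those terms by $v_{m,n}(A)$. There is no real obstacle here; the content was already absorbed into the previous lemma showing $Z(X)$ is well-defined, and this minimality statement is just the reverse inequality, obtained by recognizing that the defining supremum of $w_{Z(X)}$ is taken over precisely the quantities that axiom (2) bounds. One need not even invoke axioms (3) or (4).
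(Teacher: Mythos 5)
Your proof is correct and follows essentially the same route as the paper's: for each admissible pair $(\sigma,\tau)$ one replaces $w_{X,j,k}\left(A\circ(\sigma\times\tau)\right)$ by $v_{j,k}\left(A\circ(\sigma\times\tau)\right)$ using the restriction hypothesis, bounds it by $v_{m,n}(A)$ via the subarray axiom for $v$, and takes the supremum. Your extra remark that $v_{m,n}(A)\geq 0$ handles the $\{0\}$ in the defining supremum is a harmless (and automatic) addition.
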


\begin{proof}

For integers $1\leq j\leq m$ and $1\leq k\leq n$, consider one-to-one functions $\sigma:[j]\to[m]$ and $\tau:[k]\to[n]$ such that $A\circ(\sigma\times\tau)\in M_{m,n}(X)$.  Then,
\[
w_{X,j,k}\left(A\circ(\sigma\times\tau)\right)
=v_{j,k}\left(A\circ(\sigma\times\tau)\right)
\leq v_{m,n}(A).
\]
A supremum gives the result.

\end{proof}

\subsection{The Disjoint Union of Array-Weighted Sets}\label{set-constructions}

As done with $\WSetC$ in \cite[p.\ 7]{grandis2004} and $\MBanB$ in \cite[p.\ 247]{effros1988}, subsets and quotients of array-weighted sets inherit a natural array-weight structure and characterize the equalizer and coequalizer, respectively, in $\AWSetB$.  Products are formed by a restricted cartesian product equipped with an $\ell^{\infty}$-type structure on each matrix level, much like $\BanC$ in \cite[Example 2.1.7.d]{borceux1} and $\MBanC$ by extension.

The coproduct, however, is more difficult to describe.  Like $\WSetC$ in \cite[p.\ 7]{grandis2004}, the underlying set is a disjoint union, but extending the array-weights is nontrivial as shown in Section \ref{zero-weight}.  Since this construction will be useful in the examples of Sections \ref{constructions} and \ref{mbanachalgebra}, the array-weight structure on the disjoint union will be shown in detail.

\begin{defn}[Array-weight on a disjoint union]
Given an index set $\Lambda$, let $\left(X_{\lambda}\right)_{\lambda\in\Lambda}$ be array-weighted sets.  Let
\[
D:=\bigcup_{\lambda\in\Lambda}\left(\{\lambda\}\times X_{\lambda}\right)
\]
be the disjoint union of the underlying sets with canonical inclusions $\rho_{\lambda}:X_{\lambda}\to D$ by $\rho_{\lambda}(x):=(\lambda,x)$.  For $m,n\in\mathbb{N}$, define $w_{D,m,n}:M_{m,n}(D)\to[-\infty,\infty]$ by
\[
w_{D,m,n}(A):=\sup\left\{w_{Y,m,n}\left(M_{m,n}(\phi)(A)\right):\begin{array}{c}Y\textrm{ an array-weighted set},\\ \xymatrix{D\ar[r]^{\phi} & Y}\textrm{ a function},\\ \phi\circ\rho_{\lambda}\textrm{ completely contractive}\\ \forall\lambda\in\Lambda\end{array}\right\}.
\]
\end{defn}

\begin{lem}\label{array-weight-coproduct}
Equipped with the above functions, $D$ is an array-weighted set.
\end{lem}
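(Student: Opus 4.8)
The plan is to show that the supremum defining $w_{D,m,n}(A)$ is finite and that the four array-weight axioms hold. The key technical point is the same as in Lemma 2.3.2 (on $Z(X)$): the supremum is taken over a class of test functions, and each axiom is verified by transporting a witnessing function for a subarray or block back to a witnessing function for the whole array.

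First I would establish finiteness. Given $A \in M_{m,n}(D)$, each entry $A(j,k)$ lies in $\{\lambda_{j,k}\} \times X_{\lambda_{j,k}}$ for some index $\lambda_{j,k}$. For any test function $\phi : D \to Y$ with each $\phi \circ \rho_\lambda$ completely contractive, I would use the maximality of $\MA$ (via Theorem 2.2.7, or directly the axiom bounding a weight by a sum of blocks) to get
\[
w_{Y,m,n}\bigl(M_{m,n}(\phi)(A)\bigr) \leq \sum_{j=1}^{m}\sum_{k=1}^{n} w_{Y,1,1}\bigl(\phi(A(j,k))\bigr) \leq \sum_{j=1}^{m}\sum_{k=1}^{n} w_{X_{\lambda_{j,k}},1,1}\bigl(\pi(A(j,k))\bigr),
\]
where the second inequality uses complete contractivity of $\phi \circ \rho_{\lambda_{j,k}}$ applied to the $1\times 1$ array with entry the $X_{\lambda_{j,k}}$-component of $A(j,k)$. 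The right-hand side is a fixed finite number independent of $\phi$, so $w_{D,m,n}(A) < \infty$; and it is $\geq 0$ since the class of test functions is nonempty (e.g. any constant-to-a-point map, or $\phi$ landing in a one-point array-weighted set).

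Next I would verify axioms (2)–(4). For axiom (2), given one-to-one $\alpha, \beta$ and a test function $\phi$ witnessing $w_{D,j,k}(A\circ(\alpha\times\beta))$, the \emph{same} $\phi$ gives $w_{Y,j,k}(M_{j,k}(\phi)(A\circ(\alpha\times\beta))) = w_{Y,j,k}(M_{m,n}(\phi)(A)\circ(\alpha\times\beta)) \leq w_{Y,m,n}(M_{m,n}(\phi)(A)) \leq w_{D,m,n}(A)$, using that ampliation commutes with composition by $\alpha\times\beta$ and that $Y$ itself satisfies axiom (2); taking the supremum over $\phi$ gives the claim. Axioms (3) and (4) are handled identically: a test function $\phi$ witnessing a weight of a block of $A$ is also a valid test function for the corresponding block of $M_{m,n}(\phi)(A)$ in $Y$, so the sub-additivity inherited from $Y$'s own axiom (3) (resp. (4)), followed by a supremum, yields the result for $D$. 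Because the supremum defining $w_{D,m,n}$ ranges over \emph{all} admissible $\phi$ simultaneously for all three arrays $A\circ(\alpha\times id)$, $A\circ(\gamma\times id)$, $A$, one must be slightly careful to take the supremum on the left first and then bound termwise; this is the routine " $\sup(a+b) \leq \sup a + \sup b$ " manoeuvre.

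The main obstacle — though it is more bookkeeping than genuine difficulty — is the finiteness step, since unlike the $Z(X)$ case one cannot simply restrict to subarrays landing in a single $X_\lambda$; instead one must push forward by a test function and invoke the $\MA$-type bound to decouple the entries. Everything else is a direct transcription of the $Z(X)$ argument, with "restrict along $\sigma\times\tau$" replaced by "ampliate along $\phi$," and with the observation that ampliation is functorial and commutes with the row/column operations appearing in the axioms.
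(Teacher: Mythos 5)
Your proposal is correct and follows essentially the same route as the paper: finiteness via pushing forward by a test function and the $\MA$-type bound on entries, axiom (2) via commuting ampliation with precomposition by $\alpha\times\beta$ and invoking $Y$'s axiom, and axioms (3)--(4) via $Y$'s subadditivity followed by the term-by-term bound and a supremum over $\phi$. The only cosmetic difference is the nonemptiness witness (the paper uses the constant zero map into $\MIN(\mathbb{C})$, you a constant map into a zero-weighted point), which is immaterial.
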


\begin{proof}

Fix $m,n\in\mathbb{N}$ and $A\in M_{m,n}(D)$.  First, the supremum is shown to be of a nonempty set.  Let $c_{0}:D\to \mathbb{C}$ be the constant map to 0.  Trivially, $c_{0}\circ\rho_{\lambda}$ is completely contractive to $\MIN(\mathbb{C})$ for all $\lambda\in\Lambda$, so
\[
w_{D,m,n}(A)\geq w_{\MIN(\mathbb{C}),m,n}\left(M_{m,n}\left(c_{0}\right)(A)\right)=0.
\]

Next, the supremum is shown to be finite.  For each $1\leq j\leq m$ and $1\leq k\leq n$, there is $\lambda_{j,k}\in\Lambda$ and $x_{j,k}\in X_{\lambda_{j,k}}$ such that $A(j,k)=\rho_{\lambda_{j,k}}\left(x_{j,k}\right)$.  Let $Y$ be any array-weighted set and $\phi:D\to Y$ a function such that $\phi\circ\rho_{\lambda}$ is completely contractive for all $\lambda\in\Lambda$.  Using the maximality of $\MA$,
\[\begin{array}{rcl}
w_{Y,m,n}\left(M_{m,n}(\phi)(A)\right)
&	\leq	&	w_{\MA\left(F_{\AWSetB}^{\WSetB}(Y)\right),m,n}\left(M_{m,n}(\phi)(A)\right)\\[12pt]
&	=	&	\sum_{j=1}^{m}\sum_{k=1}^{n}w_{F_{\AWSetB}^{\WSetB}(Y)}\left(\phi(A(j,k))\right)\\[12pt]
&	=	&	\sum_{j=1}^{m}\sum_{k=1}^{n}w_{Y,1,1}\left(\phi\left(\rho_{\lambda_{j,k}}\left(x_{j,k}\right)\right)\right)\\[12pt]
&	\leq	&	\sum_{j=1}^{m}\sum_{k=1}^{n}w_{X_{\lambda_{j,k}},1,1}\left(x_{j,k}\right)\\[12pt]
\end{array}\]
so a supremum yields
\[
w_{D,m,n}(A)\leq\sum_{j=1}^{m}\sum_{k=1}^{n}w_{X_{\lambda_{j,k}},1,1}\left(x_{j,k}\right)<\infty.
\]

Now, the weight of $A$ is shown to bound the weights of its subarrays.  Let $\alpha:[j]\to[m]$ and $\beta:[k]\to[n]$ be one-to-one.  A quick check shows that
\[
M_{j,k}(\phi)\left(A\circ(\alpha\times\beta)\right)=\left(M_{m,n}(\phi)(A)\right)\circ(\alpha\times\beta),
\]
so
\[\begin{array}{rcl}
w_{Y,j,k}\left(M_{j,k}(\phi)\left(A\circ(\alpha\times\beta)\right)\right)
&	=	&	w_{Y,j,k}\left(\left(M_{m,n}(\phi)(A)\right)\circ(\alpha\times\beta)\right)\\[10pt]
&	\leq	&	w_{Y,m,n}\left(M_{m,n}(\phi)(A)\right)\\[10pt]
&	\leq	&	w_{D,m,n}(A).\\[10pt]
\end{array}\]
A supremum then shows that
\[
w_{D,j,k}\left(A\circ(\alpha\times\beta)\right)\leq w_{D,m,n}(A).
\]

Finally, the weight of $A$ is shown to be bounded by the sum of its blocks.  Let $\alpha:[j]\to[m]$ and $\gamma:[m-j]\to[m]$ be one-to-one such that ${\Ran(\alpha)\cap\Ran(\gamma)=\emptyset}$.  Then,
\[\begin{array}{rcl}
w_{Y,m,n}\left(M_{m,n}(\phi)(A)\right)

&	\leq	&	w_{Y,j,n}\left(\left(M_{m,n}(\phi)(A)\right)\circ\left(\alpha\times id_{[n]}\right)\right)\\[8pt]
&	&	+w_{Y,m-j,n}\left(\left(M_{m,n}(\phi)(A)\right)\circ\left(\gamma\times id_{[n]}\right)\right)\\[10pt]

&	=	&	w_{Y,j,n}\left(M_{j,n}(\phi)\left(A\circ\left(\alpha\times id_{[n]}\right)\right)\right)\\[8pt]
&	&	+w_{Y,m-j,n}\left(M_{m-j,n}(\phi)\left(A\circ\left(\gamma\times id_{[n]}\right)\right)\right)\\[10pt]

&	\leq	&	w_{D,j,n}\left(A\circ\left(\alpha\times id_{[n]}\right)\right)\\[8pt]
&	&	+w_{D,m-j,n}\left(A\circ\left(\gamma\times id_{[n]}\right)\right)\\[10pt]
\end{array}\]
A supremum then gives
\[
w_{D,m,n}\left(A\right)\leq w_{D,j,n}\left(A\circ(\alpha\times id_{[n]})\right)+w_{D,m-j,n}\left(A\circ(\gamma\times id_{[n]})\right)
\]
as desired.  A similar argument shows same result in the second coordinate.

\end{proof}

Notice that the inclusion maps are completely isometric as shown by appending a zero-weight element.

\begin{lem}
For $\lambda\in\Lambda$, $m,n\in\mathbb{N}$, and $A\in M_{m,n}\left(X_{\lambda}\right)$, then
\[
w_{D,m,n}\left(M_{m,n}\left(\rho_{\lambda}\right)(A)\right)=w_{X_{\lambda},m,n}(A).
\]
\end{lem}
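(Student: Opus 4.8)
The plan is to establish the two inequalities separately, using the defining supremum for $w_{D,m,n}$ together with the minimal zero-weight extension $Z\left(X_\lambda\right)$ from Section \ref{zero-weight} as a ``test object''.

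For the inequality $w_{D,m,n}\left(M_{m,n}\left(\rho_\lambda\right)(A)\right)\le w_{X_\lambda,m,n}(A)$, I would fix an arbitrary array-weighted set $Y$ and a function $\phi:D\to Y$ for which $\phi\circ\rho_\mu$ is completely contractive for every $\mu\in\Lambda$. Functoriality of $M_{m,n}$ gives $M_{m,n}(\phi)\circ M_{m,n}\left(\rho_\lambda\right)=M_{m,n}\left(\phi\circ\rho_\lambda\right)$, so Proposition \ref{complete-bound}, applied to the completely contractive map $\phi\circ\rho_\lambda$, yields
\[
w_{Y,m,n}\left(M_{m,n}(\phi)\left(M_{m,n}\left(\rho_\lambda\right)(A)\right)\right)\le\cbnd\left(\phi\circ\rho_\lambda\right)\,w_{X_\lambda,m,n}(A)\le w_{X_\lambda,m,n}(A).
\]
Taking the supremum over all such pairs $(Y,\phi)$ gives this inequality.

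For the reverse inequality I would plug in the single pair $(Y,\phi)=\left(Z\left(X_\lambda\right),\phi\right)$, where $\phi:D\to Z\left(X_\lambda\right)$ is defined by $\phi\left(\rho_\lambda(x)\right):=x$ for $x\in X_\lambda$ and $\phi\left(\rho_\mu(y)\right):=\Theta$ for $\mu\ne\lambda$ and $y\in X_\mu$. Then $\phi\circ\rho_\lambda$ is the canonical inclusion $X_\lambda\hookrightarrow Z\left(X_\lambda\right)$, which is completely isometric because $w_{Z(X_\lambda),m,n}$ agrees with $w_{X_\lambda,m,n}$ on $M_{m,n}\left(X_\lambda\right)$; in particular $\cbnd\left(\phi\circ\rho_\lambda\right)\le1$. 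For $\mu\ne\lambda$, the map $\phi\circ\rho_\mu$ sends each array in $M_{m,n}\left(X_\mu\right)$ to the constant array all of whose entries equal $\Theta$, and no sub-array of such an array lies in any $M_{j,k}\left(X_\lambda\right)$; hence the defining supremum for $w_{Z(X_\lambda),m,n}$ collapses to $0$ and $\phi\circ\rho_\mu$ is completely contractive. Thus $(Z(X_\lambda),\phi)$ is admissible in the supremum defining $w_{D,m,n}$, so
\[
w_{D,m,n}\left(M_{m,n}\left(\rho_\lambda\right)(A)\right)\ge w_{Z(X_\lambda),m,n}\left(M_{m,n}\left(\phi\circ\rho_\lambda\right)(A)\right)=w_{X_\lambda,m,n}(A),
\]
the last equality holding since $\phi\circ\rho_\lambda$ is the inclusion and the zero-weight extension preserves the weight on $M_{m,n}\left(X_\lambda\right)$. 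Combining the two displays proves the equality.

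I do not anticipate a genuine obstacle: the only point requiring care is the verification that $\phi\circ\rho_\mu$ is completely contractive for $\mu\ne\lambda$, which reduces to the observation that $w_{Z(X_\lambda),m,n}$ vanishes on arrays with every entry equal to $\Theta$ — immediate from the definition of $Z\left(X_\lambda\right)$. Choosing $Z(X_\lambda)$ (rather than some coarser extension of $w_{X_\lambda}$) as the test object is precisely what makes the lower bound tight.
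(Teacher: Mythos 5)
Your proposal is correct and follows essentially the same route as the paper: the upper bound by applying the defining supremum to an arbitrary admissible pair $(Y,\phi)$ via $M_{m,n}(\phi)\circ M_{m,n}\left(\rho_{\lambda}\right)=M_{m,n}\left(\phi\circ\rho_{\lambda}\right)$, and the lower bound by testing against the collapse map $D\to Z\left(X_{\lambda}\right)$ sending the other summands to $\Theta$. Your explicit check that $\phi\circ\rho_{\mu}$ is completely contractive for $\mu\neq\lambda$ is exactly the ``quick calculation'' the paper leaves to the reader.
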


\begin{proof}

Define $\phi:D\to Z\left(X_{\lambda}\right)$ by
\[
\phi(\mu,x):=\left\{\begin{array}{cc}
x,	&	\mu=\lambda,\\
\Theta,	&	\mu\neq\lambda.
\end{array}\right.
\]
A quick calculation shows that $\phi\circ\rho_{\mu}$ is completely contractive for all $\mu\in\Lambda$.  Thus,
\[\begin{array}{rcl}
w_{X_{\lambda},m,n}(A)
&	=	&	w_{Z\left(X_{\lambda}\right),m,n}(A)\\[10pt]
&	=	&	w_{Z\left(X_{\lambda}\right),m,n}\left(M_{m,n}(\phi)\left(M_{m,n}\left(\rho_{\lambda}\right)\left(A\right)\right)\right)\\[10pt]
&	\leq	&	w_{D,m,n}\left(M_{m,n}\left(\rho_{\lambda}\right)\left(A\right)\right).\\[10pt]
\end{array}\]
For any array-weighted set $Y$ and function $\phi:D\to Y$ such that $\phi\circ\rho_{\mu}$ is completely contractive for all $\mu\in\Lambda$,
\[\begin{array}{rcl}
w_{Y,m,n}\left(M_{m,n}(\phi)\left(M_{m,n}\left(\rho_{\lambda}\right)\left(A\right)\right)\right)
&	=	&	w_{Y,m,n}\left(M_{m,n}\left(\phi\circ\rho_{\lambda}\right)\left(A\right)\right)\\[10pt]
&	\leq	&	w_{X_{\lambda},m,n}(A).\\[10pt]
\end{array}\]
A supremum then gives
\[
w_{D,m,n}\left(M_{m,n}\left(\rho_{\lambda}\right)\left(A\right)\right)\leq w_{X_{\lambda},m,n}(A).
\]

\end{proof}

So constructed, $D$ has the following universal property, analogous to the matricial $\ell^{1}$-direct sum of Theorem \ref{MBan-coproduct}.

\begin{thm}[Universal property of the coproduct, $\AWSetB$]\label{awsetb-coproduct}
For an array-weighted set $Y$, let $\phi_{\lambda}:X_{\lambda}\to Y$ be completely bounded maps satisfying
\[
\sup\left\{\cbnd\left(\phi_{\lambda}\right):\lambda\in\Lambda\right\}<\infty.
\]
There is a unique completely bounded map $\phi:D\to Y$ such that $\phi\circ\rho_{\lambda}=\phi_{\lambda}$ for all $\lambda\in\Lambda$.  Moreover,
\[
\cbnd(\phi)=\sup\left\{\cbnd\left(\phi_{\lambda}\right):\lambda\in\Lambda\right\}.
\]
\end{thm}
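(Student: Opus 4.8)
The plan is to verify the coproduct universal property directly from the definition of the array-weight on $D$, mirroring the proof of Theorem \ref{MBan-coproduct}. Given completely bounded maps $\phi_{\lambda}:X_{\lambda}\to Y$ with $L:=\sup\{\cbnd(\phi_{\lambda}):\lambda\in\Lambda\}<\infty$, first define the set map $\phi:D\to Y$ by $\phi(\lambda,x):=\phi_{\lambda}(x)$; this is the unique set-level map satisfying $\phi\circ\rho_{\lambda}=\phi_{\lambda}$ for all $\lambda$, so uniqueness of $\phi$ is immediate once complete boundedness is established. The content is the norm estimate.

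For the estimate, I would split into the contractive case $L\leq 1$ and the general case. If every $\phi_{\lambda}$ is completely contractive, then $\phi$ is exactly one of the test maps in the supremum defining $w_{D,m,n}$, so for every $m,n\in\mathbb{N}$ and $A\in M_{m,n}(D)$ we get $w_{Y,m,n}(M_{m,n}(\phi)(A))\leq w_{D,m,n}(A)$ by definition, i.e.\ $\cbnd(\phi)\leq 1$. For general finite $L>0$ (the case $L=0$ being trivial since then each $\phi_{\lambda}$ is completely contractive and $\phi$ lands in the zero-weight part), the maps $\frac{1}{L}\phi_{\lambda}$ — meaning $\phi_{\lambda}$ viewed into $Y$ but tested against the rescaled array-weight, or more cleanly, using Proposition \ref{complete-bound}(3) — are completely contractive, so the contractive case gives $w_{Y,m,n}(M_{m,n}(\phi)(A))\leq L\cdot w_{D,m,n}(A)$, whence $\cbnd(\phi)\leq L$. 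The reverse inequality $\cbnd(\phi)\geq\cbnd(\phi_{\lambda})$ for each $\lambda$ is immediate because $\cbnd(\phi\circ\rho_{\lambda})\leq\cbnd(\phi)\cbnd(\rho_{\lambda})\leq\cbnd(\phi)$ using that $\rho_{\lambda}$ is completely isometric (the preceding lemma), so $\sup_{\lambda}\cbnd(\phi_{\lambda})\leq\cbnd(\phi)$; combined with $\cbnd(\phi)\leq L$ this forces equality.

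The one subtlety I anticipate is the rescaling argument in the general-$L$ case: the ambient matrix-norm target $Y$ is fixed, so one cannot literally "divide $Y$'s weight by $L$" as an object, but one can invoke Proposition \ref{complete-bound}(3), which characterizes complete boundedness via the supremum of ratios $w_{Y,m,n}(M_{m,n}(\phi)(A))/w_{D,m,n}(A)$; it suffices to bound this ratio by $L$. Equivalently, for each test pair $(Z,\psi)$ in the supremum defining $w_{D,m,n}(A)$ one has $w_{Z,m,n}(M_{m,n}(\psi)(A))\leq w_{D,m,n}(A)$ trivially, and one wants to insert $Y$ itself with the map $\phi$; the point is that $\phi$ is \emph{not} one of the test maps when $L>1$, so the contractive reduction is genuinely needed. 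A clean way to package this: observe that $\frac{1}{L}\phi_{\lambda}$ makes sense as a completely contractive map $X_{\lambda}\to Y'$ where $Y'$ is $Y$ with array-weight $\frac{1}{L}w_{Y,m,n}$ (which is again an array-weight, all four axioms being positively homogeneous), apply the contractive case in $Y'$, and unwind. I expect the bulk of the write-up to be this bookkeeping rather than any real obstacle, since every ingredient — Proposition \ref{complete-bound}, the complete isometry of $\rho_{\lambda}$, and positive homogeneity of the array-weight axioms — is already in hand.
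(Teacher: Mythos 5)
Your proposal is correct and follows essentially the same route as the paper: define $\phi$ as the set-level coproduct map, observe that when the $\phi_{\lambda}$ are completely contractive $\phi$ is itself an admissible test map in the supremum defining $w_{D,m,n}$, handle general $L$ by rescaling the target's array-weight by $\tfrac{1}{L}$ (your $Y'$ is exactly the paper's $Y_{L}$), and obtain the reverse inequality from the complete isometry of the inclusions $\rho_{\lambda}$. The only point to spell out is the $L=0$ case, where "lands in the zero-weight part" should be upgraded to the observation (used in the paper via maximality of $\MA$) that an image array whose entries all have weight $0$ itself has weight $0$ by the block-subadditivity axioms.
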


\begin{proof}

Let $L$ be the supremum above.  Define $\phi:D\to Y$ by $\phi(\lambda,x):=\phi_{\lambda}(x)$, the coproduct map in $\Set$.  By design, $\phi\circ\rho_{\lambda}=\phi_{\lambda}$, and uniqueness follows from the universal property of the coproduct in $\Set$.  All that remains is to prove that $\phi$ is completely bounded and that $\cbnd(\phi)=L$.

To that end, if $L=0$, then $\cbnd\left(\phi_{\lambda}\right)=0$ for all $\lambda\in\Lambda$.  Consequently,
\[
w_{Y,1,1}\left(\phi(\lambda,x)\right)
=w_{Y,1,1}\left(\phi_{\lambda}(x)\right)
=0
\]
for all $\lambda\in\Lambda$ and $x\in X_{\lambda}$.  Using $\MA$, $w_{Y,m,n}\left(M_{m,n}(\phi)(A)\right)=0$ for all $m,n\in\mathbb{N}$ and $A\in M_{m,n}(D)$.  Thus, $\phi$ is trivially completely bounded, and $\cbnd(\phi)=0=L$.

If $L\neq 0$, let $Y_{L}$ be the underlying set of $Y$ equipped with the scaled array-weight $w_{Y_{L},m,n}:=\frac{1}{L}w_{Y,m,n}$.  Observe that the identity map $\sigma:Y_{L}\to Y$ is an isomorphism in $\AWSetB$ with $\cbnd(\sigma)=L$ and $\cbnd\left(\sigma^{-1}\right)=\frac{1}{L}$.  For all $\lambda\in\Lambda$,
\[
\cbnd\left(\sigma^{-1}\circ\phi\circ\rho_{\lambda}\right)
=\cbnd\left(\sigma^{-1}\circ\phi_{\lambda}\right)
\leq\cbnd\left(\sigma^{-1}\right)\cbnd\left(\phi_{\lambda}\right)
\leq1.
\]
Consequently,
\[
\frac{1}{L}w_{Y,m,n}\left(M_{m,n}(\phi)(A)\right)
=w_{Y_{L},m,n}\left(M_{m,n}(\phi)(A)\right)
\leq w_{D,m,n}(A),
\]
or rather, $w_{Y,m,n}\left(M_{m,n}(\phi)(A)\right)\leq Lw_{D,m,n}(A)$.  Hence, $\phi$ is completely bounded and $\cbnd(\phi)\leq L$.  Equality can be shown using arrays from each $X_{\lambda}$.

\end{proof}

Letting $\AWSetC$ be the category of array-weighted sets with completely contractive maps, this theorem guarantees that $\AWSetC$ has all coproducts.  As such, the notation
\[
{\coprod_{\lambda\in\Lambda}}^{\AWSetC}X_{\lambda}
\]
will be used to denote the disjoint union of the family $\left(X_{\lambda}\right)_{\lambda\in\Lambda}$.

Admittedly, the description of the array-weight for the disjoint union is not ideal as it relies upon an abstract supremum.  One would like to have a more intrinsic or explicit description of the array-weight, but this structure remains nebulous in general.  Even when the constituent sets arise as subsets of a common array-weighted set, the resultant array-weight on the disjoint union may not be immediately obvious.

\begin{ex}\label{badness2}
Consider $X:=\{1,-1\}$ as an array-weighted subset of $\MIN(\mathbb{C})$, and let $X_{1}:=\{1\}$ and $X_{2}:=\{-1\}$ be considered as array-weighted subsets of $X$.  Note that
\[
w_{X,2,2}\left(\begin{bmatrix}1 & 1\\ 1 & -1\\\end{bmatrix}\right)=\sqrt{2}
\]
while
\[
w_{\AMAX(\mathbb{C}),2,2}\left(\begin{bmatrix}1 & 1\\ 1 & -1\\\end{bmatrix}\right)=2\sqrt{2}
\]
and
\[
w_{\MA(\mathbb{C}),2,2}\left(\begin{bmatrix}1 & 1\\ 1 & -1\\\end{bmatrix}\right)=4.
\]
Notice that the map $\phi:D\to\AMAX(\mathbb{C})$ by $\phi(x):=x$ satisfies $\phi\circ\rho_{n}$ is completely contractive for $n=1,2$.  Hence,
\[
w_{X,2,2}\left(\begin{bmatrix}1 & 1\\ 1 & -1\\\end{bmatrix}\right)
<2\sqrt{2}
\leq w_{D,2,2}\left(\begin{bmatrix}1 & 1\\ 1 & -1\\\end{bmatrix}\right)
\leq 4.
\]
The above inequality shows that $X\not\cong_{\AWSetC}D$.  Hence, even though $X_{1}$ and $X_{2}$ inherited their array-weight structure from $X$ and together constitute $X$, their disjoint union structure is distinct from $X$.
\end{ex}

The behavior shown in the example above demonstrates that $\AWSetC$ does not behave quite the same as $\WSetC$.  In $\WSetC$, every object can be decomposed into a coproduct in a natural way.  The following notation was a suggestion from a referee of \cite{grilliette1}.

\begin{defn}[The constantly weighted set]
Given a set $S$ and $\lambda\in[0,\infty)$, let $W_{\lambda}(S)$ denote the set $S$ equipped with the constant weight function $w_{W_{\lambda}(S)}:S\to[0,\infty)$ by $w_{W_{\lambda}(S)}(s):=\lambda$.
\end{defn}

\begin{prop}[Decomposition of a weighted set]\label{decomp-wset}
Given a weighted set $S$,
\[
S\cong_{\WSetC}{\coprod_{s\in S}}^{\WSetC}W_{w_{S}(s)}\left(\{s\}\right),
\]
\end{prop}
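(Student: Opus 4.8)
The plan is to show directly that $S$, together with the canonical inclusions, is a coproduct of the family $\bigl(W_{w_{S}(s)}(\{s\})\bigr)_{s\in S}$ in $\WSetC$; since coproducts are unique up to unique isomorphism, the asserted isomorphism then follows.

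First I would fix, for each $s\in S$, the map $\iota_{s}\colon W_{w_{S}(s)}(\{s\})\to S$ defined by $\iota_{s}(s):=s$. Because $w_{S}\bigl(\iota_{s}(s)\bigr)=w_{S}(s)=w_{W_{w_{S}(s)}(\{s\})}(s)$, the inequality $w_{S}(\iota_{s}(s))\le L\, w_{W_{w_{S}(s)}(\{s\})}(s)$ holds with $L=1$ (and with $L=0$ when $w_{S}(s)=0$), so $\bnd(\iota_{s})\le 1$ and each $\iota_{s}$ is a morphism of $\WSetC$.

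Next I would verify the universal property. Let $T$ be a weighted set and let $\phi_{s}\colon W_{w_{S}(s)}(\{s\})\to T$ be contractive for every $s\in S$. On underlying sets, $S$ is the disjoint union of the singletons $\{s\}$, hence a coproduct in $\Set$, so there is exactly one function $\phi\colon S\to T$ with $\phi\circ\iota_{s}=\phi_{s}$ for all $s$, namely $\phi(s):=\phi_{s}(s)$; uniqueness here is just the uniqueness from the set-level coproduct. This $\phi$ is contractive, since for every $s\in S$
\[
w_{T}\bigl(\phi(s)\bigr)=w_{T}\bigl(\phi_{s}(s)\bigr)\le\bnd(\phi_{s})\,w_{W_{w_{S}(s)}(\{s\})}(s)\le w_{S}(s).
\]
Thus $\bigl(S,(\iota_{s})_{s\in S}\bigr)$ satisfies the defining universal property of ${\coprod_{s\in S}}^{\WSetC}W_{w_{S}(s)}(\{s\})$, which yields the claimed isomorphism.

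The argument is entirely formal, and the only point demanding a moment's care is the treatment of elements $s$ with $w_{S}(s)=0$: there the relevant bound-constant inequalities read $w_{T}(\cdot)\le L\cdot 0$, which is satisfied precisely because $\iota_{s}$ and $\phi_{s}$ are contractive (so the left-hand side vanishes). Beyond this bookkeeping there is no real obstacle.
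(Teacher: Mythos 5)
Your proof is correct, and it amounts to the same direct verification the paper has in mind (the paper simply states the proposition is ``immediate from direct calculation''): the underlying set of the coproduct of the singletons $W_{w_{S}(s)}(\{s\})$ is the disjoint union of the $\{s\}$, i.e.\ $S$ itself with the same weights, and your check of the universal property, including the careful handling of weight-zero elements, is exactly the required computation.
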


The proof of the proposition is immediate from direct calculation.  However, Example \ref{badness2} shows that such a decomposition is not always possible for an arbitrary array-weighted set.  However, if an array-weighted set has the maximum structure, such a decomposition is immediate.

\begin{cor}[Decomposition of a maximally array-weighted set]
Given a weighted set $S$,
\[
\MA(S)\cong_{\AWSetC}{\coprod_{s\in S}}^{\AWSetC}\MA\left(W_{w_{S}(s)}\left(\{s\}\right)\right).
\]
\end{cor}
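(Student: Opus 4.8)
The plan is to exhibit the isomorphism as a left-adjoint statement, paralleling the corollary for $\AMAX$ and direct sums. Since $\MA$ is a left adjoint by Theorem \ref{MA-prop} (it is the left adjoint of the forgetful functor $F_{\AWSetB}^{\WSetB}$ restricted appropriately to contractive maps), and left adjoints preserve colimits, it suffices to combine this with the decomposition of the underlying weighted set from Proposition \ref{decomp-wset}.

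First I would observe that $\MA$ restricts to a functor $\WSetC \to \AWSetC$ which is left adjoint to the forgetful functor $F_{\AWSetC}^{\WSetC}$ stripping all weight functions except the underlying one; this is the contractive-map version of Theorem \ref{MA-prop}, and follows because Theorem \ref{MA-prop} gives $\cbnd(\hat\phi) = \bnd(\phi)$, so contractions correspond to contractions. Next I would invoke Proposition \ref{decomp-wset} to write
\[
S \cong_{\WSetC} {\coprod_{s\in S}}^{\WSetC} W_{w_S(s)}(\{s\}).
\]
Applying the functor $\MA$ to both sides and using that $\MA$, being a left adjoint, preserves coproducts, yields
\[
\MA(S) \cong_{\AWSetC} \MA\left({\coprod_{s\in S}}^{\WSetC} W_{w_S(s)}(\{s\})\right) \cong_{\AWSetC} {\coprod_{s\in S}}^{\AWSetC} \MA\left(W_{w_S(s)}(\{s\})\right),
\]
which is exactly the claim.

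Alternatively, if one prefers to avoid quoting the left-adjoint-preserves-colimits principle, the same result can be proved by hand: let $D := {\coprod_{s\in S}}^{\AWSetC} \MA(W_{w_S(s)}(\{s\}))$, note that its underlying set is $S$ via the coproduct map, and show the identity map $S \to S$ is a completely isometric isomorphism between $D$ and $\MA(S)$. For one direction, since each inclusion $W_{w_S(s)}(\{s\}) \to F_{\AWSetB}^{\WSetB}(\MA(S))$ is contractive, Theorem \ref{MA-prop} lifts it to a completely contractive map $\MA(W_{w_S(s)}(\{s\})) \to \MA(S)$, and Theorem \ref{awsetb-coproduct} assembles these into a completely contractive map $D \to \MA(S)$ that is the identity on underlying sets. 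For the reverse, the identity map $S \to F_{\AWSetB}^{\WSetB}(D)$ is contractive (each singleton has the same weight in both), so Theorem \ref{MA-prop} lifts it to a completely contractive map $\MA(S) \to D$; the two maps are mutually inverse since they are the identity on the underlying set and $\MA(S)$ is generated in the appropriate sense. The main obstacle, such as it is, is purely bookkeeping: one must be careful that the weight of the singleton $\{s\}$ in $W_{w_S(s)}(\{s\})$ really does equal $w_S(s)$ so that both universal lifts are genuinely contractive, and that the coproduct's underlying-set structure matches $S$ under the relabeling $(s, s) \mapsto s$. Given the machinery already in place, no genuine difficulty arises.
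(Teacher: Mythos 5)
Your first argument is exactly the paper's proof: the paper dispatches this corollary in one line by noting that $\MA$ is a left adjoint applied to the coproduct decomposition of Proposition \ref{decomp-wset}, which is precisely your main route (the hands-on verification you sketch as an alternative is fine but not needed). Correct, same approach.
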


The proof follows as $\MA$ is a left adjoint functor applied to a coproduct.  Moreover, the disjoint union of array-weighted sets gives another method of appending a zero-weight element, as illustrated in the following example.

\begin{ex}\label{badness1}
Consider $X:=\{1,0\}$ as an array-weighted subset of $\MIN(\mathbb{C})$, and let $X_{1}:=\{1\}$ and $X_{2}:=\{0\}$ be considered as array-weighted subsets of $X$.  Note that
\[
w_{Z\left(X_{1}\right),2,2}\left(\begin{bmatrix}1 & 1\\ 1 & \Theta\\\end{bmatrix}\right)=\sqrt{2}
\]
while
\[
w_{X,2,2}\left(\begin{bmatrix}1 & 1\\ 1 & 0\\\end{bmatrix}\right)=\frac{1}{2}+\frac{1}{2}\sqrt{5}.
\]
and
\[
w_{\MA\left(\mathbb{C}\right),2,2}\left(\begin{bmatrix}1 & 1\\ 1 & 0\\\end{bmatrix}\right)=3.
\]
However, note also that
\[
w_{\AMAX(\mathbb{C}),2,2}\left(\begin{bmatrix}1 & 1\\ 1 & 0\\\end{bmatrix}\right)=\sqrt{5},
\]
and that the map $\phi:D\to\AMAX(\mathbb{C})$ by $\phi(x):=x$ satisfies $\phi\circ\rho_{n}$ is completely contractive for $n=1,2$.  Hence,
\[
w_{Z\left(X_{1}\right),2,2}\left(\begin{bmatrix}1 & 1\\ 1 & \Theta\\\end{bmatrix}\right)
<w_{X,2,2}\left(\begin{bmatrix}1 & 1\\ 1 & 0\\\end{bmatrix}\right)
<\sqrt{5}
\leq w_{D,2,2}\left(\begin{bmatrix}1 & 1\\ 1 & 0\\\end{bmatrix}\right)
\leq3.
\]
Notably, the inequality above shows that $Z\left(X_{1}\right)\not\cong_{\AWSetC}D$, and both are distinct from $X$.
\end{ex}

Recall that Theorem \ref{minimality-zx} describes the least way to append a zero-weight element.  The above example seems to imply that the disjoint union structure would be the greatest way to append a zero-element, which is indeed the case.

\begin{cor}
Let $\mathfrak{Z}:=\{\Theta\}$ be equipped with the trivial array-weight
\[
w_{\mathfrak{Z},m,n}(B):=0
\]
for all $m,n\in\mathbb{N}$ and $B\in M_{m,n}(\mathfrak{Z})$.  Given an array-weighted set $X$, let $\left(v_{m,n}\right)_{m,n\in\mathbb{N}}$ be another array-weight on $Z(X)$ such that $v_{1,1}(\Theta)=0$ and $v_{m,n}(C)=w_{X,m,n}(C)$ for all $m,n\in\mathbb{N}$ and $C\in M_{m,n}(X)$.  Then,
\[
v_{m,n}(A)\leq w_{X{\coprod}^{\AWSetC}\mathfrak{Z},m,n}(A)
\]
for all $m,n\in\mathbb{N}$ and $A\in M_{m,n}(Z(X))$.
\end{cor}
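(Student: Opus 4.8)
The plan is to realize the given array-weight $\left(v_{m,n}\right)_{m,n\in\mathbb{N}}$ as one of the functions appearing in the supremum that defines the array-weight on $X{\coprod}^{\AWSetC}\mathfrak{Z}$, whence the inequality is immediate. Write $D:=X{\coprod}^{\AWSetC}\mathfrak{Z}$ with canonical inclusions $\rho_{1}:X\to D$ and $\rho_{2}:\mathfrak{Z}\to D$, and identify the underlying set of $D$ with $Z(X)=X\uplus\{\Theta\}$ in the evident way, so that $\rho_{1}$ becomes the inclusion $X\hookrightarrow Z(X)$ and $\rho_{2}(\Theta)=\Theta$. Let $Y$ denote the set $Z(X)$ equipped with the array-weight $\left(v_{m,n}\right)_{m,n\in\mathbb{N}}$, and let $\phi:D\to Y$ be the identity on underlying sets.

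The first step is to check that $\phi\circ\rho_{1}$ and $\phi\circ\rho_{2}$ are completely contractive, so that $\phi$ is admissible in the supremum defining $w_{D,m,n}$. The map $\phi\circ\rho_{1}$ is the inclusion $X\hookrightarrow Y$, and since $v_{m,n}(C)=w_{X,m,n}(C)$ for all $C\in M_{m,n}(X)$, it is completely isometric, hence completely contractive. For $\phi\circ\rho_{2}$, the set $M_{m,n}(\mathfrak{Z})$ has exactly one element, the constant array, and $M_{m,n}(\phi\circ\rho_{2})$ carries it to the array $\Theta_{m,n}\in M_{m,n}(Z(X))$ all of whose entries are $\Theta$. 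By the maximality of $\MA$ (used as in the proof of Lemma \ref{array-weight-coproduct}) together with the hypothesis $v_{1,1}(\Theta)=0$,
\[
0\leq v_{m,n}\left(\Theta_{m,n}\right)\leq\sum_{j=1}^{m}\sum_{k=1}^{n}v_{1,1}(\Theta)=0,
\]
so $v_{m,n}\left(\Theta_{m,n}\right)=0$; since also $w_{\mathfrak{Z},m,n}$ of the unique array in $M_{m,n}(\mathfrak{Z})$ is $0$, the map $\phi\circ\rho_{2}$ is completely contractive, with $\cbnd(\phi\circ\rho_{2})=0$.

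With both composites completely contractive, $\phi$ is one of the functions over which the supremum defining $w_{D,m,n}$ is taken, so for all $m,n\in\mathbb{N}$ and $A\in M_{m,n}(Z(X))$,
\[
w_{X{\coprod}^{\AWSetC}\mathfrak{Z},m,n}(A)=w_{D,m,n}(A)\geq w_{Y,m,n}\left(M_{m,n}(\phi)(A)\right)=v_{m,n}(A),
\]
which is the claimed inequality. I do not expect a real obstacle: the one point needing care is the vanishing of $v$ on the all-$\Theta$ arrays, which follows from maximality of $\MA$ and $v_{1,1}(\Theta)=0$ exactly as above, and the remainder is bookkeeping with the identification $D\cong Z(X)$ and the definition of the disjoint-union array-weight.
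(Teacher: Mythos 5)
Your proposal is correct and follows essentially the same route as the paper: realize $Z(X)$ with the array-weight $v$ as a target $Y$, check that the inclusion of $X$ is completely isometric and the inclusion of $\mathfrak{Z}$ is completely contractive (the vanishing of $v$ on all-$\Theta$ arrays via maximality of $\MA$ is exactly the paper's computation), and conclude the inequality from complete contractivity of the induced map on the coproduct. The only cosmetic difference is that the paper invokes Theorem \ref{awsetb-coproduct} to produce the completely contractive map, whereas you unfold the supremum defining the disjoint-union array-weight directly; these amount to the same argument.
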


\begin{proof}

Let $Y$ denote $Z(X)$ equipped with the array-weight $\nu$.  Define $\phi_{1}:X\to Y$ by $\phi_{1}(x):=x$ and $\phi_{2}:\mathfrak{Z}\to Y$ by $\phi_{2}(\Theta):=\Theta$, the usual set inclusions.  By assumption, $\phi_{1}$ is completely isometric.  For $m,n\in\mathbb{N}$ and $B\in M_{m,n}(\mathfrak{Z})$, observe that
\[\begin{array}{rcl}
\nu_{m,n}(B)
&	\leq	&	w_{\MA\left(F_{\AWSetB}^{\WSetB}(Y)\right),m,n}(B)\\[10pt]
&	=	&	\sum_{j=1}^{m}\sum_{k=1}^{n}w_{F_{\AWSetB}^{\WSetB}(Y)}(B(j,k))\\[10pt]
&	=	&	\sum_{j=1}^{m}\sum_{k=1}^{n}w_{F_{\AWSetB}^{\WSetB}(Y)}(\Theta)\\[10pt]
&	=	&	0\\[10pt]
&	=	&	w_{\mathfrak{Z},m,n}(B),\\[10pt]
\end{array}\]
meaning that $\phi_{2}$ is also completely isometric.  By Theorem \ref{awsetb-coproduct}, there is a unique completely contractive map $\phi:X{\coprod}^{\AWSetC}\mathfrak{Z}\to Y$ such that $\phi\circ\rho_{j}=\phi_{j}$ for all $j=1,2$.  For $m,n\in\mathbb{N}$ and $A\in M_{m,n}(Z(X))$, a calculation shows
\[
\nu_{m,n}(A)
=\nu_{m,n}\left(M_{m,n}(\phi)(A)\right)
\leq w_{X{\coprod}^{\AWSetC}\mathfrak{Z},m,n}(A).
\]

\end{proof}

\subsection{Array-Free Elements}\label{array-free}

In pure algebra, the basis of a vector space is traditionally shown to be linearly independent by using characteristic functions, regarded as functions into the field.  This relationship between characteristic functions and linear independence motivates the following definition.

\begin{defn}
For an array-weighted set $X$, an element $x\in X$ is \emph{array-free} in $X$ if the characteristic function of $x$ is completely bounded when regarded as a map from $X$ to $\MIN(\mathbb{C})$.
\end{defn}

This first example illuminates the relationship between array-freeness and linear independence.

\begin{ex}
Let $V$ be a matrix-normed space and $X\subset V$ a finite, linearly independent subset equipped with the inherited array-weight from $V$.  For $x\in X$, consider the characteristic function $\chi:X\to\mathbb{C}$ of $x$.  Letting $W:=\Span(X)$, define a linear map $\phi:W\to\mathbb{C}$ on the basis $X$ by $\phi(y):=\chi(y)$ for all $y\in X$.  As $W$ is finite-dimensional, $\phi$ is bounded.  Letting $\iota:W\to V$ be the inclusion map, there is a bounded linear map $\varphi:V\to\mathbb{C}$ such that $\varphi\circ\iota=\phi$ by the Hahn-Banach Theorem.  By Theorem \ref{MIN-prop}, $\varphi$ is completely bounded from $V$ to $\MIN(\mathbb{C})$.  Letting $\epsilon:X\to W$ be the inclusion of generators, then $\varphi\circ\iota\circ\epsilon=\chi$ is completely bounded.  Thus, $x$ is array-free in $X$.
\end{ex}

The next proposition motivates the nomenclature.

\begin{prop}[Array-free and finite support functions]
Given an array-weighted set $X$, let $Y$ be a subset of $X$.  All functions from $X$ to $\MIN(\mathbb{C})$ with finite support contained in $Y$ are completely bounded if and only if $x$ is array-free in $X$ for all $x\in Y$.
\end{prop}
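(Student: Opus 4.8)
The plan is to treat the two implications separately; the forward one is immediate, and the reverse one will reduce to a finite linear combination.

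For the forward direction, I would assume that every finitely supported function $X\to\MIN(\mathbb{C})$ with support contained in $Y$ is completely bounded, fix $x\in Y$, and observe that the characteristic function $\chi$ of $x$ has support $\{x\}\subseteq Y$, hence is finitely supported with support in $Y$. By hypothesis $\chi$ is completely bounded from $X$ to $\MIN(\mathbb{C})$, which is by definition the statement that $x$ is array-free in $X$.

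For the reverse direction, I would assume each $x\in Y$ is array-free and take a function $f:X\to\MIN(\mathbb{C})$ whose support is a finite set $\{x_{1},\dots,x_{p}\}\subseteq Y$, disposing first of the empty-support case (there $f$ is the zero function with $\cbnd(f)=0$). Setting $\lambda_{i}:=f(x_{i})$ and letting $\chi_{i}$ be the characteristic function of $x_{i}$, a direct check gives $f=\sum_{i=1}^{p}\lambda_{i}\chi_{i}$ as functions on $X$. Each $\chi_{i}$ is completely bounded by hypothesis, so I would put $L_{i}:=\cbnd(\chi_{i})$. The key point is that ampliation is linear in its function argument: from $M_{m,n}(\phi)(A)(j,k)=\phi(A(j,k))$ one gets $M_{m,n}(f)(A)=\sum_{i=1}^{p}\lambda_{i}M_{m,n}(\chi_{i})(A)$ in $\mathbb{M}_{m,n}$. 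Since $w_{\MIN(\mathbb{C}),m,n}$ is a norm on $\mathbb{M}_{m,n}$, the triangle inequality, absolute homogeneity, and the estimates $w_{\MIN(\mathbb{C}),m,n}\left(M_{m,n}(\chi_{i})(A)\right)\leq L_{i}\,w_{X,m,n}(A)$ give
\[
w_{\MIN(\mathbb{C}),m,n}\left(M_{m,n}(f)(A)\right)\leq\sum_{i=1}^{p}|\lambda_{i}|\,w_{\MIN(\mathbb{C}),m,n}\left(M_{m,n}(\chi_{i})(A)\right)\leq\left(\sum_{i=1}^{p}|\lambda_{i}|L_{i}\right)w_{X,m,n}(A)
\]
for all $m,n\in\mathbb{N}$ and $A\in M_{m,n}(X)$, so $f$ is completely bounded with $\cbnd(f)\leq\sum_{i=1}^{p}|\lambda_{i}|L_{i}<\infty$.

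I expect no real analytic obstacle here: finiteness of the support is precisely what converts the potentially ill-behaved collection of functions into $\MIN(\mathbb{C})$ into a genuine finite linear combination, so no completeness, Hahn--Banach, or dimension argument is needed (in contrast to the preceding example). The only steps needing care are the entrywise linearity of $M_{m,n}(-)$ in its function slot and the invocation of the norm axioms on each matrix level of $\MIN(\mathbb{C})$, together with the explicit handling of the empty-support case.
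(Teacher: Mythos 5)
Your proposal is correct and follows essentially the same route as the paper: the forward implication via the observation that a characteristic function has finite support, and the reverse implication by writing $f=\sum_{i=1}^{p}f(x_{i})\chi_{i}$, using linearity of the ampliation and the triangle inequality on each matrix level of $\MIN(\mathbb{C})$ together with the bounds $\cbnd\left(\chi_{i}\right)$. Your explicit treatment of the empty-support case and of the linearity of $M_{m,n}(-)$ in the function slot are minor elaborations of steps the paper leaves as a ``quick calculation,'' not a different argument.
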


\begin{proof}

$(\Rightarrow)$ This direction is immediate as a characteristic function has finite support.

$(\Leftarrow)$  Let $\phi:X\to\mathbb{C}$ have finite support $\left(x_{j}\right)_{j=1}^{p}\subseteq Y$.  Then, $\phi=\sum_{j=1}^{p}\phi\left(x_{j}\right)\chi_{j}$, where $\chi_{j}$ is the characteristic function of $x_{j}$.  For $m,n\in\mathbb{N}$ and $A\in M_{m,n}(X)$, a quick calculation shows that
\[
M_{m,n}(\phi)(A)=\sum_{j=1}^{p}\phi\left(x_{j}\right)M_{m,n}\left(\chi_{j}\right)(A),
\]
so
\[\begin{array}{rcl}
\left\|M_{m,n}(\phi)(A)\right\|_{\MIN(\mathbb{C}),m,n}
&	\leq	&	\sum_{j=1}^{p}\left|\phi\left(x_{j}\right)\right|\left\|M_{m,n}\left(\chi_{j}\right)(A)\right\|_{\MIN(\mathbb{C}),m,n}\\[10pt]
&	\leq	&	\left(\sum_{j=1}^{p}\left|\phi\left(x_{j}\right)\right|\cbnd\left(\chi_{j}\right)\right)w_{X,m,n}(A).\\[10pt]
\end{array}\]
Thus, $\phi$ is completely bounded.

\end{proof}

Consequently, array-free elements act like free elements in the sense that a finite number of them can be mapped arbitrarily while the remainder of the set is annihilated.  The most important case for Sections \ref{constructions} and \ref{mbanachalgebra} is when all elements of an array-weighted set are array-free.  To detect this quickly, the following metric is introduced.

\begin{defn}
Given an array-weighted set $X$, the \emph{bounded range number} of $X$ is
\[
\brn(X):=\inf\left\{\frac{w_{X,m,n}(A)}{\sqrt{mn}}:m,n\in\mathbb{N},A\in M_{m,n}(X)\right\}.
\]
\end{defn}

The value finds its name in the following theorem.

\begin{thm}[Bounded range maps and $\brn$]
Given an array-weighted set $X$, the following are equivalent:
\begin{enumerate}
\item all bounded range maps from $X$ to $\MIN(\mathbb{C})$ are completely bounded;
\item the constant map to 1 regarded as a map from $X$ to $\MIN(\mathbb{C})$ is completely bounded;
\item $\brn(X)>0$.
\end{enumerate}
In this case, $x$ is array-free in $X$ for all $x\in X$.
\end{thm}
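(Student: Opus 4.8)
The plan is to establish the cycle of implications $(1)\Rightarrow(2)\Rightarrow(3)\Rightarrow(1)$ and then read the last clause off $(1)$. The single computational input used throughout is that the matrix norm on $\MIN(\mathbb{C})$ at level $(m,n)$ is exactly the operator norm on $\mathbb{M}_{m,n}$: this is immediate from Definition \ref{MIN-definition}, since the closed unit ball of $\mathbb{C}^{*}$ is the closed unit disk and the ampliation of a functional acts by scalar multiplication. Throughout I may assume $X\neq\emptyset$, since if $X=\emptyset$ all three conditions hold vacuously and $\brn(X)=\infty>0$.

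The implication $(1)\Rightarrow(2)$ is trivial, as the constant map to $1$ has range $\{1\}$. For $(2)\Rightarrow(3)$, let $c_{1}:X\to\MIN(\mathbb{C})$ denote the constant map to $1$ and put $L:=\cbnd(c_{1})$. For any $m,n\in\mathbb{N}$ and $A\in M_{m,n}(X)$, the ampliation $M_{m,n}(c_{1})(A)$ is the all-ones $m\times n$ matrix $\mathbf{1}_{m}\mathbf{1}_{n}^{*}\in\mathbb{M}_{m,n}$, a rank-one operator of operator norm $\|\mathbf{1}_{m}\|_{2}\|\mathbf{1}_{n}\|_{2}=\sqrt{mn}$; hence $\sqrt{mn}\le L\,w_{X,m,n}(A)$. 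Taking $m=n=1$ (valid since $M_{1,1}(X)\neq\emptyset$) forces $L>0$, and then $w_{X,m,n}(A)/\sqrt{mn}\ge 1/L$ for every admissible $m,n,A$, so $\brn(X)\ge 1/L>0$.

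For $(3)\Rightarrow(1)$, write $r:=\brn(X)$, which lies in $(0,\infty)$ since $X\neq\emptyset$, and let $\phi:X\to\MIN(\mathbb{C})$ have bounded range, say $|\phi(x)|\le R$ for all $x\in X$. For $m,n\in\mathbb{N}$ and $A\in M_{m,n}(X)$, every one of the $mn$ entries of $M_{m,n}(\phi)(A)$ has modulus at most $R$, so the Hilbert--Schmidt estimate for the operator norm gives
\[
\left\|M_{m,n}(\phi)(A)\right\|_{\MIN(\mathbb{C}),m,n}\le\left(\sum_{j=1}^{m}\sum_{k=1}^{n}\left|\phi(A(j,k))\right|^{2}\right)^{1/2}\le R\sqrt{mn}\le\frac{R}{r}\,w_{X,m,n}(A),
\]
the final inequality holding because $w_{X,m,n}(A)\ge r\sqrt{mn}$ by the definition of $\brn$. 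Thus $\phi$ is completely bounded with $\cbnd(\phi)\le R/r$. I expect the only real subtlety to be this last estimate: the crude bound $\|B\|\le\sum_{j,k}|B_{j,k}|$ for an operator norm yields only $mnR$ here, which cannot be absorbed into $r\sqrt{mn}$, so one genuinely needs the Frobenius/Hilbert--Schmidt norm to produce the $\sqrt{mn}$ growth rate that matches $\brn$ --- everything else is mechanical. Finally, granting the equivalent conditions, for each $x\in X$ the characteristic function $\chi_{x}:X\to\MIN(\mathbb{C})$ has range contained in $\{0,1\}$, hence has bounded range, hence is completely bounded by $(1)$; this is precisely the assertion that $x$ is array-free in $X$.
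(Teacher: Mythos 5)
Your proposal is correct and follows essentially the same route as the paper: the trivial implication $(1)\Rightarrow(2)$, the all-ones ampliation giving $\sqrt{mn}\leq\cbnd\left(c_{1}\right)w_{X,m,n}(A)$ for $(2)\Rightarrow(3)$, the Hilbert--Schmidt (Frobenius) bound $\left\|M_{m,n}(\phi)(A)\right\|_{\MIN(\mathbb{C}),m,n}\leq M\sqrt{mn}\leq\frac{M}{\brn(X)}w_{X,m,n}(A)$ for $(3)\Rightarrow(1)$, and the observation that characteristic functions have bounded range for the final clause. Your extra remarks (the empty-set case and the explicit identification of the $\MIN(\mathbb{C})$ matrix-norm with the operator norm) are harmless refinements of the same argument.
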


\begin{proof}

$(1\Rightarrow 2)$ This is immediate as the constant map to 1 has bounded range.

$(2\Rightarrow 3)$ Let $c_{1}:X\to\mathbb{C}$ be the constant map to 1.  For all $m,n\in\mathbb{N}$ and $A\in M_{m,n}(X)$,
\[
\sqrt{mn}
=w_{\MIN(\mathbb{C}),m,n}\left(M_{m,n}\left(c_{1}\right)(A)\right)
\leq\cbnd\left(c_{1}\right)w_{X,m,n}(A).
\]
Thus, $\cbnd\left(c_{1}\right)\neq 0$ and
\[
\frac{w_{X,m,n}(A)}{\sqrt{mn}}\geq\frac{1}{\cbnd\left(c_{1}\right)},
\]
so $\brn(X)\geq\frac{1}{\cbnd\left(c_{1}\right)}>0$.

$(3\Rightarrow 1)$ Let $\phi:X\to\mathbb{C}$ have range bounded by $M$.  For $m,n\in\mathbb{N}$ and $A\in M_{m,n}(X)$, then
\[\begin{array}{rcl}
\left\|M_{m,n}(\phi)(A)\right\|_{\MIN(\mathbb{C}),m,n}
&	\leq	&	\left(\sum_{j=1}^{m}\sum_{k=1}^{n}\left|\phi(A(j,k))\right|^{2}\right)^{1/2}\\[10pt]
&	\leq	&	\left(\sum_{j=1}^{m}\sum_{k=1}^{n}M^{2}\right)^{1/2}\\[10pt]
&	=	&	M\sqrt{mn}\\[10pt]
&	\leq	&	\frac{M}{\brn(X)}w_{X,m,n}(A).\\[10pt]
\end{array}\]
Thus, $\phi$ is completely bounded.

\end{proof}

Combining the previous two results gives the following statement for finite array-weighted sets.

\begin{cor}[Array-free and finite sets]\label{finite-array-free}
Given a finite array-weighted set $X$, the following are equivalent:
\begin{enumerate}
\item all maps from $X$ to $\MIN(\mathbb{C})$ are completely bounded;
\item the constant map to 1 regarded as a map from $X$ to $\MIN(\mathbb{C})$ is completely bounded;
\item $\brn(X)>0$;
\item $x$ is array-free in $X$ for all $x\in X$.
\end{enumerate}
\end{cor}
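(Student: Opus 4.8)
The plan is to prove Corollary \ref{finite-array-free} by invoking the two preceding results and closing the logical loop with one short extra implication that exploits finiteness. The implications $(2)\Leftrightarrow(3)$ and $(3)\Rightarrow(4)$ are already furnished by Theorem~[Bounded range maps and $\brn$], and the implication $(4)\Rightarrow$ ``all finite-support maps from $X$ to $\MIN(\mathbb{C})$ are completely bounded'' is exactly Proposition~[Array-free and finite support functions] applied with $Y=X$. Since $X$ is finite, \emph{every} function $X\to\mathbb{C}$ has finite support, so that conclusion becomes statement $(1)$ verbatim. Thus the only genuinely new content is recording that finiteness collapses ``finite support'' to ``arbitrary,'' and confirming $(1)\Rightarrow(2)$, which is trivial since the constant map to $1$ is a particular map from $X$ to $\MIN(\mathbb{C})$.

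Concretely, I would organize the proof as a cycle. First, $(1)\Rightarrow(2)$: the constant map $c_{1}:X\to\mathbb{C}$ to $1$ is one of the maps from $X$ to $\MIN(\mathbb{C})$, so if all such maps are completely bounded, in particular $c_{1}$ is. Next, $(2)\Leftrightarrow(3)$ is quoted directly from Theorem~[Bounded range maps and $\brn$] (its equivalences $(2)\Leftrightarrow(3)$). Then $(3)\Rightarrow(4)$ is the final clause of that same theorem. Finally, to close the cycle I need $(4)\Rightarrow(1)$: assume every $x\in X$ is array-free in $X$; by Proposition~[Array-free and finite support functions] with $Y:=X$, every function from $X$ to $\MIN(\mathbb{C})$ whose finite support is contained in $X$ is completely bounded; but $X$ is finite, so every function $X\to\mathbb{C}$ has finite support (necessarily contained in $X$), hence every such map is completely bounded, which is $(1)$. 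This gives $(1)\Rightarrow(2)\Leftrightarrow(3)\Rightarrow(4)\Rightarrow(1)$, establishing the equivalence of all four.

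There is essentially no obstacle here: the corollary is a bookkeeping consequence of the two theorems it follows, and the only subtlety worth a sentence in the write-up is the remark that finiteness of $X$ is precisely what upgrades the finite-support hypothesis of Proposition~[Array-free and finite support functions] to the unrestricted statement $(1)$. I would keep the proof to a few lines, citing the two prior results by name and spelling out only the $(1)\Rightarrow(2)$ triviality and the $(4)\Rightarrow(1)$ finiteness observation explicitly.

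\begin{proof}
The implications among $(2)$, $(3)$, and $(4)$ are immediate from Theorem~[Bounded range maps and $\brn$], which gives $(2)\Leftrightarrow(3)$ and $(3)\Rightarrow(4)$. Moreover, $(1)\Rightarrow(2)$ is trivial, since the constant map to $1$ is in particular a map from $X$ to $\MIN(\mathbb{C})$. It remains to show $(4)\Rightarrow(1)$. Assuming $(4)$, Proposition~[Array-free and finite support functions], applied with $Y:=X$, shows that every function from $X$ to $\MIN(\mathbb{C})$ with finite support contained in $X$ is completely bounded. As $X$ is finite, every function from $X$ to $\MIN(\mathbb{C})$ has finite support contained in $X$, so every such function is completely bounded, which is $(1)$. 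Therefore $(1)\Rightarrow(2)\Leftrightarrow(3)\Rightarrow(4)\Rightarrow(1)$, and all four statements are equivalent.
\end{proof}
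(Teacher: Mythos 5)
Your proof is correct and is essentially the paper's argument: the paper gives no separate proof, stating only that the corollary follows by ``combining the previous two results,'' which is exactly the cycle you spell out, with the $(4)\Rightarrow(1)$ step using finiteness to upgrade the finite-support statement and $(1)\Rightarrow(2)$ being trivial.
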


While Criterion (3) is both necessary and sufficient for all elements of a finite set to be array-free, it is not necessary for infinite sets.

\begin{ex}
Let $V$ be $\ell^{\infty}$ with any matrix-norm.  Letting $\left(\vec{e}_{j}\right)_{j\in\mathbb{N}}\subset V$ be the standard basis, define $x_{j}:=\frac{1}{j}\vec{e}_{j}$ and $X:=\left\{x_{j}:j\in\mathbb{N}\right\}\subset V$ with the inherited array-weight from $V$.  Then, $\brn(X)=0$.

For $j\in\mathbb{N}$, consider the characteristic function $\chi:X\to\mathbb{C}$ of $x_{j}$.  Define a bounded linear map $\phi:\ell^{\infty}\to\mathbb{C}$ by $\phi\left(\vec{x}\right):=j\vec{x}(j)$, the scaled evaluation map at $j$.  By Theorem \ref{MIN-prop}, $\phi$ is completely bounded from $V$ to $\MIN(\mathbb{C})$.  Letting $\epsilon:X\to V$ be the inclusion map, then $\phi\circ\epsilon=\chi$ is completely bounded.  Thus, $x_{j}$ is array-free in $X$.
\end{ex}

The maximal and minimal array-weight structures give stark extremes on array-freeness.  The maximal array-weight behaves exactly like a weighted set in this regard.

\begin{ex}
Let $S$ be a weighted set.  For $s\in S$, let $\chi:S\to\mathbb{C}$ be the characteristic function of $s$.  Then, $\chi$ is bounded if and only if $w_{S}(s)\neq 0$.  Consequently, $\chi$ is completely bounded from $\MA(S)$ to $\MIN(\mathbb{C})$ if and only if $w_{S}(s)\neq 0$ by Theorem \ref{MA-prop}.  Therefore, $s$ is array-free in $\MA(S)$ if and only if $w_{S}(s)\neq0$.
\end{ex}

On the other hand, no element from a set with the minimal array-weight is array-free, regardless of the underlying weight function.

\begin{ex}\label{badness3}
Let $S$ be a weighted set and $\phi:\mA(S)\to\MIN(\mathbb{C})$ a completely bounded function.  For any $s\in S$ and $n\in\mathbb{N}$, let $A_{n}$ be the $n\times n$-array with only $s$ as an entry.  Likewise, let $J_{n,n}$ be the $n\times n$-matrix with only 1 as an entry.  Then, $M_{n,n}(\phi)\left(A_{n}\right)=\phi(s)J_{n,n}$, so
\[
|\phi(s)|n
=\left\|M_{n,n}(\phi)\left(A_{n}\right)\right\|_{\MIN(\mathbb{C}),n,n}
\leq\cbnd(\phi)w_{\mA(S),n,n}\left(A_{n}\right)
=\cbnd(\phi)w_{S}(s).
\]
Hence, $\phi(s)=0$, meaning that $\phi$ cannot be the characteristic function of $s$.  Therefore, $s$ is not array-free in $\mA(S)$.
\end{ex}

In the previous two examples, elements which were not array-free were automatically mapped to 0.  However, this need not be the case.

\begin{ex}\label{badness4}
Consider $X:=\left\{1,-1\right\}$ with the inherited array-weight from $\MIN(\mathbb{C})$.  Observe that the natural inclusion map $\iota:X\to\MIN(\mathbb{C})$ is completely isometric, and neither element is mapped to 0.  Unfortunately, neither element is array-free.  To show this fact, $\brn(X)$ will be shown to be 0.

To that end, let $J_{m,n}$ be the $m\times n$-matrix with all entries 1.  Inductively construct the following sequence of matrices.
\[\begin{array}{rcl}
A_{0}	&	:=	&	\begin{bmatrix}1\end{bmatrix},\\[15pt]
A_{m+1}	&	:=	&	\begin{bmatrix}J_{1,2^{m}} & -J_{1,2^{m}}\\ A_{m} & A_{m}\end{bmatrix}\forall m\in\mathbb{W},\\[15pt]
\end{array}\]
where $\mathbb{W}:=\mathbb{N}\cup\{0\}$ is the whole numbers.  From definition, $A_{0}\in M_{1,1}(X)$.  For induction, assume for some $m\in\mathbb{W}$ that $A_{m}\in M_{m+1,2^{m}}(X)$.  Then, $A_{m+1}$ has $1+(m+1)=m+2$ rows and $2\cdot 2^{m}=2^{m+1}$ columns.  Moreover, all entries in $A_{m+1}$ are either from $A_{m}$, $J_{1,2^{m}}$, or $-J_{1,2^{m}}$.  Consequently, the entries of $A_{m+1}$ are either $1$ or $-1$.  Thus, $A_{m+1}\in M_{m+2,2^{m+1}}(X)$ as desired.

Notice that $A_{0}A_{0}^{*}=I_{1}$ and
\[
A_{m+1}A_{m+1}^{*}=\begin{bmatrix}2^{m+1} & 0\\ 0 & 2A_{m}A_{m}^{*}\end{bmatrix}
\]
for $m\in\mathbb{W}$, where $I_{m}$ is the identity of $\mathbb{M}_{m,m}$.  For induction, assume for some $m\in\mathbb{W}$ that
$A_{m}A_{m}^{*}=2^{m}I_{m+1}$.
Then,
\[\begin{array}{rcl}
A_{m+1}A_{m+1}^{*}
&	=	&	\begin{bmatrix}2^{m+1} & 0\\ 0 & 2\cdot2^{m}I_{m+1}\end{bmatrix}\\[20pt]
&	=	&	\begin{bmatrix}2^{m+1} & 0\\ 0 & 2^{m+1}I_{m+1}\end{bmatrix}\\[20pt]
&	=	&	2^{m+1}\begin{bmatrix}1 & 0\\ 0 & I_{m+1}\end{bmatrix}\\[20pt]
&	=	&	2^{m+1}I_{m+2}\\
\end{array}\]
as desired.

Computing directly, $w_{X,m+1,2^{m}}\left(A_{m}\right)=2^{m/2}$ and
\[
\brn(X)
\leq\frac{w_{X,m+1,2^{m}}\left(A_{m}\right)}{\sqrt{(m+1)2^{m}}}
=\frac{2^{m/2}}{2^{m/2}\sqrt{m+1}}
=\frac{1}{\sqrt{m+1}}
\]
for all $m\in\mathbb{W}$.  Consequently, $\brn(X)=0$.

By Theorem \ref{finite-array-free}, at least one of $1$ or $-1$ is not array-free.  Let $\chi_{x}$ denote the characteristic function of $x\in X$ and observe that $\iota=\chi_{1}-\chi_{-1}$.  Consequently, $\chi_{1}$ is completely bounded if and only if $\chi_{-1}$ is.  Hence, $1$ is array-free if and only if $-1$ is, meaning neither can be.
\end{ex}

\section{Scaled-Free Matricial Banach Space}\label{constructions}

This section concerns the construction of building matricial Banach spaces from array-weighted sets.  While the main idea is to build the appropriate free algebraic object and construct a universal matrix-norm as in previous works \cite{blackadar1985,gerbracht,goodearl,grilliette1}, the interplay between the levels of an array-weight necessitates a quotient structure in general.  This issue is illustrated in Example \ref{badness5}.

By Example \ref{motivating}, there is a natural forgetful functor $F_{\MBanB}^{\AWSetB}:\MBanB\to\AWSetB$ where all linear structure is removed, leaving the matrix-norm as an array-weight structure.  The goal is now to reverse this process.

\begin{defn2}[Matricial Banach space construction]
Given an array-weighted set $X$, let $V_{X}$ be the free complex vector space on $X$ and $\theta_{X}:X\to V_{X}$ the embedding of generators.  Define
\[
N_{X}:=\left\{v\in V_{X}:\begin{array}{c}\phi(v)=0\forall\phi:V_{X}\to\MIN(\mathbb{C})\textrm{ linear such that }\\\phi\circ\theta_{X}\textrm{ is completely contractive}\end{array}\right\}.
\]
One can check that the set $N_{X}$ is a linear subspace of $V_{X}$.  Let $Q_{X}:=V_{X}/N_{X}$ with quotient map $q_{X}:V_{X}\to Q_{X}$.  Define functions $\|\cdot\|_{Q_{X},m,n}:M_{m,n}\left(Q_{X}\right)\to[-\infty,\infty]$ by
\[
\|A\|_{Q_{X},m,n}:=\sup\left\{\left\|M_{m,n}(\phi)(A)\right\|_{W,m,n}:\begin{array}{l}W\textrm{ a matricial Banach space},\\ \phi:Q_{X}\to W\textrm{ linear},\\ \phi\circ q_{X}\circ\theta_{X}\textrm{ completely contractive}\end{array}\right\}
\]
for all $m,n\in\mathbb{N}$.
\end{defn2}

\begin{lem2}\label{generators-cc}
Equipped with the above functions, $Q_{X}$ is a matrix-normed space and
\[
\left\|M_{m,n}\left(q_{X}\circ\theta_{X}\right)(A)\right\|_{Q_{X},m,n}\leq w_{X,m,n}(A)
\]
for all $m,n\in\mathbb{N}$ and $A\in M_{m,n}(X)$.
\end{lem2}

\begin{proof}

Notice that the supremum nonnegative since the zero map from $Q_{X}$ to $\MIN(\mathbb{C})$ is completely contractive on the generating set $X$.  Now, the inequality on matrices of generators will be shown.  Consider a matricial Banach space $W$ and a linear map $\phi:Q_{X}\to W$ such that $\phi\circ q_{X}\circ\theta_{X}$ is completely contractive.  For $m,n\in\mathbb{N}$ and $A\in M_{m,n}(X)$, observe that
\[\begin{array}{rcl}
\left\|M_{m,n}(\phi)\left(M_{m,n}\left(q_{X}\circ\theta_{X}\right)(A)\right)\right\|_{W,m,n}
&	=	&	\left\|M_{m,n}\left(\phi\circ q_{X}\circ\theta_{X}\right)(A)\right\|_{W,m,n}\\[10pt]
&	\leq	&	w_{X,m,n}(A).\\[10pt]
\end{array}\]
A supremum then yields
\[
\left\|M_{m,n}\left(q_{X}\circ\theta_{X}\right)(A)\right\|_{Q_{X},m,n}\leq w_{X,m,n}(A).
\]

Next, the norm of individual vectors and matrices will be shown to be finite.  Consider a matricial Banach space $W$ and a linear map $\phi:Q_{X}\to W$ such that $\phi\circ q_{X}\circ\theta_{X}$ is completely contractive.  Let $n\in\mathbb{N}$, $\left(x_{j}\right)_{j=1}^{n}\subseteq X$, and $\left(\lambda_{j}\right)_{j=1}^{n}\subset\mathbb{C}$.  Then,
\[\begin{array}{rcl}
\left\|\phi\left(\sum_{j=1}^{n}\lambda_{j}\left(q_{X}\circ \theta_{X}\right)\left(x_{j}\right)\right)\right\|_{W,1,1}
&	=	&	\left\|\sum_{j=1}^{n}\lambda_{j}\left(\phi\circ q_{X}\circ \theta_{X}\right)\left(x_{j}\right)\right\|_{W,1,1}\\[10pt]
&	\leq	&	\sum_{j=1}^{n}\left|\lambda_{j}\right|\left\|\left(\phi\circ q_{X}\circ \theta_{X}\right)\left(x_{j}\right)\right\|_{W,1,1}\\[10pt]
&	\leq	&	\sum_{j=1}^{n}\left|\lambda_{j}\right|w_{X,1,1}\left(x_{j}\right),\\[10pt]
\end{array}\]
so a supremum gives
\[
\left\|\sum_{j=1}^{n}\lambda_{j}\left(q_{X}\circ \theta_{X}\right)\left(x_{j}\right)\right\|_{Q_{X},1,1}\leq\sum_{j=1}^{n}\left|\lambda_{j}\right|w_{X,1,1}\left(x_{j}\right)<\infty.
\]
For $m,n\in\mathbb{N}$, let $B\in M_{m,n}\left(Q_{X}\right)$.  Then,
\[\begin{array}{rcl}
\left\|M_{m,n}(\phi)(B)\right\|_{W,m,n}
&	\leq	&	w_{F_{\MBanB}^{\AWSetB}(W),m,n}\left(M_{m,n}(\phi)(B)\right)\\[10pt]
&	=	&	\sum_{j=1}^{m}\sum_{k=1}^{n}w_{F_{\MBanB}^{\AWSetB}(W),1,1}(\phi(B(j,k)))\\[10pt]
&	=	&	\sum_{j=1}^{m}\sum_{k=1}^{n}\|\phi(B(j,k))\|_{W,1,1}\\[10pt]
&	\leq	&	\sum_{j=1}^{m}\sum_{k=1}^{n}\|B(j,k)\|_{Q_{X},1,1},\\[10pt]
\end{array}\]
and a supremum yields
\[
\left\|B\right\|_{Q_{X},m,n}\leq\sum_{j=1}^{m}\sum_{k=1}^{n}\|B(j,k)\|_{Q_{X},1,1}<\infty.
\]

Say $v\in Q_{X}$ satisfies $\|v\|_{Q_{X},1,1}=0$.  There is $w\in V_{X}$ such that $q_{X}(w)=v$.  For a linear map $\phi:V_{X}\to\MIN(\mathbb{C})$ such that $\phi\circ\theta_{X}$ is completely contractive, there is a unique linear map $\hat{\phi}:Q_{X}\to\MIN(\mathbb{C})$ such that $\hat{\phi}\circ q_{X}=\phi$.  Then, $\hat{\phi}\circ q_{X}\circ\theta_{X}=\phi\circ\theta_{X}$ is completely contractive, meaning
\[
0\leq
|\phi(w)|
=\left|\left(\hat{\phi}\circ q_{X}\right)(w)\right|
=\left|\hat{\phi}(v)\right|
\leq\|v\|_{Q_{X},1,1}
=0.
\]
As $\phi$ was arbitrary, $w\in N_{X}$, and $v=q_{X}(w)=0$.

For $m,n\in\mathbb{N}$ and $B,C\in M_{m,n}\left(Q_{X}\right)$, consider a matricial Banach space $W$ and a linear map $\phi:Q_{X}\to W$ such that $\phi\circ q_{X}\circ\theta_{X}$ is completely contractive.  Then,
\[\begin{array}{rcl}
\left\|M_{m,n}(\phi)(B+C)\right\|_{W,m,n}
&	\leq	&	\left\|M_{m,n}(\phi)(B)\right\|_{W,m,n}+\left\|M_{m,n}(\phi)(C)\right\|_{W,m,n}\\[10pt]
&	\leq	&	\left\|B\right\|_{Q_{X},m,n}+\left\|C\right\|_{Q_{X},m,n},\\[10pt]
\end{array}\]
and a supremum gives
\[
\left\|B+C\right\|_{Q_{X},m,n}\leq\left\|B\right\|_{Q_{X},m,n}+\left\|C\right\|_{Q_{X},m,n}
\]
Let $j,k\in\mathbb{N}$, $D\in\mathbb{M}_{j,m}$, and $E\in\mathbb{M}_{n,k}$.  A quick calculation shows that $M_{j,k}(\phi)(DBE)=D\cdot M_{m,n}(\phi)(B)\cdot E$, so
\[\begin{array}{rcl}
\left\|M_{j,k}(\phi)(DBE)\right\|_{W,j,k}
&	=	&	\left\|D\cdot M_{m,n}(\phi)(B)\cdot E\right\|_{W,j,k}\\[10pt]
&	\leq	&	\|D\|_{\mathbb{M}_{j,m}}\left\|M_{m,n}(\phi)(B)\right\|_{W,m,n}\|E\|_{\mathbb{M}_{n,k}}\\[10pt]
&	\leq	&	\|D\|_{\mathbb{M}_{j,m}}\|B\|_{Q_{X},m,n}\|E\|_{\mathbb{M}_{n,k}}.\\[10pt]
\end{array}\]
A supremum shows that
\[
\|DBE\|_{Q_{X},j,k}\leq\|D\|_{\mathbb{M}_{j,m}}\|B\|_{Q_{X},m,n}\|E\|_{\mathbb{M}_{n,k}}.
\]

\end{proof}

With this matrix-norm constructed, the space is completed to ensure the creation of a matricial Banach space.

\begin{defn2}[Scaled-free matricial Banach space]
Given an array-weighted set $X$, the \emph{scaled-free matricial Banach space} of $X$ is $\MBanSp(X):=\MC\left(Q_{X}\right)$.  Define $\eta_{X}:=\kappa_{Q_{X}}\circ q_{X}\circ\theta_{X}$, the association of generators.  By Lemma \ref{generators-cc}, $\eta_{X}$ is completely contractive.
\end{defn2}

So constructed, the scaled-free matricial Banach space has the following universal property, analogous to \cite[Theorem 3.1.1]{grilliette1}

\begin{thm2}[Universal property of $\MBanSp$]\label{scaled-free-MBan}
Given a matricial Banach space $W$ and a completely bounded map $\phi:X\to F_{\MBanB}^{\AWSetB}(W)$, there is a unique completely bounded linear map $\hat{\phi}:\MBanSp(X)\to W$ such that $F_{\MBanB}^{\AWSetB}\left(\hat{\phi}\right)\circ\eta_{X}=\phi$.  Moreover,
\[
\cbnd(\phi)\geq\left\|\hat{\phi}\right\|_{\CB\left(\MBanSp(X),W\right)}.
\]
\end{thm2}

\begin{proof}

By the universal property of $V_{X}$, there is a unique linear $\varphi:V_{X}\to W$ such that $\varphi\circ\theta_{X}=\phi$.  For $v\in N_{X}$, there is $\psi\in W^{*}$ such that $|\psi(\varphi(v))|=\|\varphi(v)\|_{W,1,1}$ and $\|\psi\|_{W^{*}}=1$.  By Theorem \ref{MIN-prop}, $\psi$ is completely contractive from $W$ to $\MIN(\mathbb{C})$.  Then, $\psi\circ\phi=\psi\circ\varphi\circ\theta_{X}$ is a completely contractive function from $X$ to $\MIN(\mathbb{C})$, so
\[
\|\varphi(v)\|_{W,1,1}
=|\psi(\varphi(v))|
=0.
\]
Thus, $N_{X}\subseteq\ker(\varphi)$, so there is a unique linear $\varpi:Q_{X}\to W$ such that $\varpi\circ q_{X}=\varphi$ by the universal property of the quotient.

If $\cbnd(\phi)=0$, then $\phi(x)=0$ for all $x\in X$, and consequently, $\varphi$ is the zero map, as is $\varpi$.  In this case, $\cbnd(\phi)=0=\left\|\varpi\right\|_{\CB\left(Q_{X},W\right)}$.

If $\cbnd(\phi)\neq 0$, let $\hat{\varpi}:=\frac{1}{\cbnd(\phi)}\varpi$ be the scaled map.  For $m,n\in\mathbb{N}$ and $A\in M_{m,n}(X)$,
\[\begin{array}{rcl}
\left\|M_{m,n}\left(\hat{\varpi}\circ q_{X}\circ\theta_{X}\right)(A)\right\|_{W,m,n}
&	=	&	\frac{1}{\cbnd(\phi)}\left\|M_{m,n}\left(\varpi\circ q_{X}\circ\theta_{X}\right)(A)\right\|_{W,m,n}\\[12pt]
&	=	&	\frac{1}{\cbnd(\phi)}\left\|M_{m,n}(\phi)(A)\right\|_{W,m,n}\\[12pt]
&	\leq	&	w_{X,m,n}(A).\\[12pt]
\end{array}\]
Thus, $\hat{\varpi}\circ q_{X}\circ\theta_{X}$ is completely contractive, so
\[
\frac{1}{\cbnd(\phi)}\left\|M_{m,n}\left(\varpi\right)(B)\right\|_{W,m,n}
=\left\|M_{m,n}\left(\hat{\varpi}\right)(B)\right\|_{W,m,n}
\leq\left\|B\right\|_{Q_{X},m,n}
\]
for all $B\in M_{m,n}\left(Q_{X}\right)$.  Hence, $\varpi$ is completely bounded and
\[
\cbnd(\phi)\geq\left\|\varpi\right\|_{\CB\left(Q_{X},W\right)}.
\]

In either case, there is a unique completely bounded linear map
\[
\hat{\phi}:\MBanSp(X)\to W
\]
such that $\hat{\phi}\circ\kappa_{Q_{X}}=\varpi$ by Theorem \ref{univ-prop-complete} and
\[
\left\|\hat{\phi}\right\|_{\CB(\MBanSp(X),W)}=\left\|\varpi\right\|_{\CB\left(Q_{X},W\right)}\leq\cbnd(\phi).
\]
Notice that
\[
\hat{\phi}\circ\eta_{X}
=\hat{\phi}\circ\kappa_{Q_{X}}\circ q_{X}\circ\theta_{X}
=\varpi\circ q_{X}\circ\theta_{X}
=\varphi\circ\theta_{X}
=\phi
\]
as desired.  Uniqueness follows from universal properties of the free vector space, the quotient vector space, and the completion.

\end{proof}

Thus, $\MBanSp$ is a left adjoint functor, meaning it will behave well with coproducts and other left adjoints.  Applying $\MBanSp$ to the disjoint union array-weighted set from Theorem \ref{awsetb-coproduct} gives the following result.

\begin{cor2}
For an index set $\Lambda$, let $X_{\lambda}$ be an array-weighted set for each $\lambda\in\Lambda$.  Then,
\[
\MBanSp\left({\coprod_{\lambda\in\Lambda}}^{\AWSetC}X_{\lambda}\right)\cong_{\MBanC}{\coprod_{\lambda\in\Lambda}}^{\MBanC}\MBanSp\left(X_{\lambda}\right).
\]
\end{cor2}

Composing forgetful functors, observe that
\[
F^{\WSetB}_{\AWSetB}\circ F^{\AWSetB}_{\MBanB}=F^{\WSetB}_{\BanB}\circ F^{\BanB}_{\MBanB}.
\]
By the composition of left adjoints, both $\MBanSp\circ\MA$ and $\AMAX\circ\BanSp$ qualify as a left adjoint to the forgetful functor composition.  By uniqueness of left adjoints, these two functors must be naturally isomorphic.

\begin{cor2}\label{MA-MBanSP}
Given a weighted set $S$, then
\[\begin{array}{rcl}
\MBanSp(\MA(S))
&	\cong_{\MBanC}	&	\AMAX(\BanSp(S))\\[10pt]
&	\cong_{\MBanC}	&	\AMAX\left(\ell^{1}\left(\left\{s:w_{S}(s)\neq0\right\}\right)\right).\\[10pt]
\end{array}\]
\end{cor2}

On the other hand, the right adjoint $\mA$ trivializes $\MBanSp$ by Example \ref{badness3}.

\begin{prop2}[Failure of $\mA$]
Given a weighted set $S$, then $\MBanSp(\mA(S))$ is the zero space.
\end{prop2}

\begin{proof}

Let $\phi:V_{\mA(S)}\to\MIN(\mathbb{C})$ be a linear map such that $\phi\circ\theta_{\mA(S)}$ is completely contractive.  By Example \ref{badness3}, $\left(\phi\circ\theta_{\mA(S)}\right)(s)=0$ for all $s\in S$.  Hence, $\theta_{\mA(S)}(s)\in N_{\mA(S)}$ for all $s\in S$.  Consequently, $N_{\mA(S)}=V_{\mA(S)}$.

\end{proof}

An immediate question that arises is whether or not the quotient is necessary in the construction of $\MBanSp(X)$.  When all elements of $X$ are array-free, it is pleasantly not.

\begin{prop2}\label{array-free-nullset}
If all $x\in X$ are array-free in $X$, then $N_{X}=\{0\}$.
\end{prop2}

\begin{proof}

Let $v\in N_{X}$.  There are $\left(x_{j}\right)_{j=1}^{n}\subseteq X$ and $\left(\lambda_{j}\right)_{j=1}^{n}\subset\mathbb{C}$ such that $v=\sum_{j=1}^{n}\lambda_{j}\theta_{X}\left(x_{j}\right)$.  For $1\leq j\leq n$, let $\chi_{j}$ be the characteristic function of $x_{j}$.  Then, there is a unique linear $\hat{\chi}_{j}:V_{X}\to\mathbb{C}$ such that $\hat{\chi}_{j}\circ\theta_{X}=\chi_{j}$.  Define $\phi_{j}:=\frac{1}{\cbnd\left(\chi_{j}\right)}\hat{\chi}_{j}$.  For $m,n\in\mathbb{N}$ and $A\in M_{m,n}(X)$,
\[\begin{array}{rcl}
\left\|M_{m,n}\left(\phi_{j}\circ\theta_{X}\right)(A)\right\|_{\MIN(\mathbb{C}),m,n}
&	=	&	\frac{1}{\cbnd\left(\chi_{j}\right)}\left\|M_{m,n}\left(\hat{\chi}_{j}\circ\theta_{X}\right)(A)\right\|_{\MIN(\mathbb{C}),m,n}\\[12pt]
&	=	&	\frac{1}{\cbnd\left(\chi_{j}\right)}\left\|M_{m,n}\left(\chi_{j}\right)(A)\right\|_{\MIN(\mathbb{C}),m,n}\\[12pt]
&	\leq	&	w_{X,m,n}(A).\\[12pt]
\end{array}\]
By definition of $N_{X}$,
\[\begin{array}{rcl}
0
&	=	&	\phi_{j}\left(v\right)\\[12pt]
&	=	&	\frac{1}{\cbnd\left(\chi_{j}\right)}\hat{\chi}_{j}\left(v\right)\\[12pt]
&	=	&	\frac{1}{\cbnd\left(\chi_{j}\right)}\sum_{k=1}^{n}\lambda_{k}\left(\hat{\chi}_{j}\circ\theta_{X}\right)\left(x_{k}\right)\\[12pt]
&	=	&	\frac{1}{\cbnd\left(\chi_{j}\right)}\sum_{k=1}^{n}\lambda_{k}\chi_{j}\left(x_{k}\right)\\[12pt]
&	=	&	\frac{\lambda_{j}}{\cbnd\left(\chi_{j}\right)}.\\[12pt]
\end{array}\]
Hence, $\lambda_{j}=0$.  Since $j$ was arbitrary, $v=0$.

\end{proof}

Combining this with Corollary \ref{finite-array-free} numerically characterizes when $N_{X}$ is trivial for finite array-weighted sets.

\begin{thm2}[Array-free and finite sets, Part II]\label{goodness1}
Given a finite array-weighted set $X$, then $N_{X}=\{0\}$ if and only if $\brn(X)>0$.
\end{thm2}

\begin{proof}

$(\Leftarrow)$ By Corollary \ref{finite-array-free}, all $x\in X$ are array-free in $X$.  By Proposition \ref{array-free-nullset}, $N_{X}=\{0\}$.

$(\Rightarrow)$ As $X$ is finite, $V_{X}$, $Q_{X}$, and $\MBanSp(X)$ are finite-dimensional, so $\kappa_{Q_{X}}$ is an isomorphism of matrix-normed spaces.  Moreover, if $N_{X}=\{0\}$, then $q_{X}$ is a vector space isomorphism as well.

Given any function $\phi:X\to\mathbb{C}$, there is a unique linear map $\varphi:V_{X}\to\mathbb{C}$ such that $\varphi\circ\theta_{X}=\phi$.  There is also a unique linear map $\varpi:Q_{X}\to\mathbb{C}$ such that $\varpi\circ q_{X}=\varphi$.  Since $Q_{X}$ is finite-dimensional, $\varpi$ is bounded and, by Theorem \ref{MIN-prop}, completely bounded from $Q_{X}$ to $\MIN(\mathbb{C})$.  Then, there is a unique completely bounded linear map $\hat{\phi}:\MBanSp(X)\to\MIN(\mathbb{C})$ such that $\hat{\phi}\circ\kappa_{Q_{X}}=\varpi$ by Theorem \ref{univ-prop-complete}.  Notice that $\hat{\phi}\circ\eta_{X}=\phi$ is completely bounded.  Since $\phi$ was arbitrary, Corollary \ref{finite-array-free} states that $\brn(X)>0$.

\end{proof}

In fact, $\MBanSp$ of a singleton array-weighted set is readily computed.

\begin{thm2}[Characterization of singletons, $\MBanSp$]\label{singletons-mbansp}
Let $X=\{x\}$ be a singleton array-weighted set.  Then,
\[
\MBanSp(X)\cong_{\MBanC}\left\{\begin{array}{cc}
\AMAX(\mathbb{C}),	&	\brn(X)>0,\\
\{0\},	&	\brn(X)=0.\\
\end{array}\right.
\]
\end{thm2}

\begin{proof}

Let $\chi:X\to\mathbb{C}$ be the constant map to 1, which is the characteristic function of $x$.  From Corollary \ref{finite-array-free}, $\chi$ is completely bounded to $\MIN(\mathbb{C})$ if and only if $\brn(X)>0$.  Notice that given any map $\phi:X\to\mathbb{C}$, $\phi=\phi(x)\chi$.  Thus, if $\brn(X)=0$, $\phi$ is completely bounded to $\MIN(\mathbb{C})$ if and only if $\phi(x)=0$.  Hence, $\theta_{X}(x)\in N_{X}$, meaning $\MBanSp(X)\cong\{0\}$.

If $\brn(X)>0$, define $\phi:X\to\mathbb{C}$ by $\phi(x):=\brn(X)$.  A quick check shows that $\phi$ is completely contractive from $X$ to $\AMAX(\mathbb{C})$.  Thus, there is a unique completely contractive linear $\hat{\phi}:\MBanSp(X)\to\AMAX(\mathbb{C})$ such that $\hat{\phi}\circ\eta_{X}=\phi$.  Consequently,
\[
\brn(X)\leq\left\|\eta_{X}(x)\right\|_{\MBanSp(X),1,1}.
\]

On the other hand, consider a completely contractive map $\psi:X\to W$ for some arbitrary matricial Banach space $W$.  For $m,n\in\mathbb{N}$ and $A\in M_{m,n}(X)$,
\[
M_{m,n}\left(\psi\right)(A)=\psi(x)\otimes J_{m,n},
\]
where $J_{m,n}$ is the matrix with all entries 1.  Then,
\[\begin{array}{rcl}
\sqrt{mn}\left\|\psi(x)\right\|_{W,1,1}
&	=	&	\left\|\psi(x)\otimes J_{m,n}\right\|_{\MIN\left(F_{\MBanC}^{\BanC}(W)\right),m,n}\\[12pt]
&	\leq	&	\left\|\psi(x)\otimes J_{m,n}\right\|_{W,m,n}\\[12pt]
&	\leq	&	\left\|\psi(x)\otimes J_{m,n}\right\|_{\AMAX\left(F_{\MBanC}^{\BanC}(W)\right),m,n}\\[12pt]
&	=	&	\sqrt{mn}\left\|\psi(x)\right\|_{W,1,1},\\[12pt]
\end{array}\]
forcing equality.  Consequently,
\[
\frac{\left\|M_{m,n}\left(\psi\right)(A)\right\|_{W,m,n}}{w_{X,m,n}(A)}
=\frac{\sqrt{mn}\left\|\psi(x)\right\|_{W,1,1}}{w_{X,m,n}(A)}
\leq 1,
\]
so $\left\|\psi(x)\right\|_{W,1,1}\leq\brn(X)$.  In particular, this states that
\[
\brn(X)=\left\|\eta_{X}(x)\right\|_{\MBanSp(X),1,1}.
\]
Define $\varphi:\mathbb{C}\to\MBanSp(X)$ by $\varphi(\lambda):=\frac{\lambda}{\brn(X)}\eta_{X}(x)$.  This map is immediately linear and contractive by the calculations above.  By Theorem \ref{AMAX-prop}, $\varphi$ is completely contractive from $\AMAX(\mathbb{C})$ to $\MBanSp(X)$.  A routine calculation now shows that $\varphi\circ\hat{\phi}=id_{\MBanSp(X)}$ and $\hat{\phi}\circ\varphi=id_{\AMAX(\mathbb{C})}$.

\end{proof}

Unfortunately, the quotient structure is necessary in general.  The following example shows $N_{X}$ to be nontrivial while not annihilating either generator.  It also yields $\MIN(\mathbb{C})$ rather than $\AMAX(\mathbb{C})$ or $\{0\}$ like the previous examples.

\begin{ex2}\label{badness5}
Let $X$, $\iota$, and $\chi_{x}$ for $x\in X$ be as defined in \ref{badness4}.  Define $v:=\eta_{X}(1)\in\MBanSp(X)$, and $w_{x}:=\theta_{X}(x)\in V_{X}$ for $x\in X$.

First, $\MBanSp(X)$ is characterized as the span of $v$.  Consider a linear map $\phi:V_{X}\to\MIN\left(\mathbb{C}\right)$ such that $\phi\circ\theta_{X}$ is completely contractive.  Observe that
\[
\phi\circ\theta_{X}
=\phi\left(w_{1}\right)\iota
+\left(\phi\left(w_{1}\right)+\phi\left(w_{-1}\right)\right)\chi_{-1}.
\]
Since $\chi_{-1}$ is not completely bounded, $\phi\left(w_{-1}\right)=-\phi\left(w_{1}\right)$, meaning that $\phi\circ\theta_{X}=\phi\left(w_{1}\right)\iota$.  For $\lambda,\mu\in\mathbb{C}$,
\[
\phi\left(\lambda w_{1}+\mu w_{-1}\right)
=(\lambda-\mu)\phi_{1}\left(w_{1}\right),
\]
which is guaranteed to be 0 when $\lambda=\mu$.  Consequently,
\[
N_{X}=\Span\left\{w_{1}+w_{-1}\right\},
\]
and $\MBanSp(X)=\Span\{v\}$.

Next, the norm of $v\otimes I_{m}$ is computed, where $I_{m}$ is the identity of $\mathbb{M}_{m,m}$.  By Theorem \ref{scaled-free-MBan}, there is a unique completely contractive linear map $\hat{\iota}:\MBanSp(X)\to\MIN(\mathbb{C})$ such that $\hat{\iota}\circ\eta_{X}=\iota$.  For $m\in\mathbb{N}$,
\[
1=\left\|I_{m}\right\|_{\mathbb{M}_{m,m}}
=\left\|M_{m,n}\left(\hat{\iota}\right)\left(v\otimes I_{m}\right)\right\|_{\MIN(\mathbb{C}),m,m}
\leq\left\|v\otimes I_{m}\right\|_{\MBanSp(X),m,m}.
\]
Letting $A_{m}$ be defined as in Example \ref{badness4}, then
\[
I_{m}=2^{-m+1}A_{m-1}A_{m-1}^{*}
\]
and
\[
M_{m,2^{m-1}}\left(\eta_{X}\right)\left(A_{m-1}\right)=v\otimes A_{m-1}
\]
for all $m\in\mathbb{W}$.  Therefore,
\[
v\otimes I_{m}
=v\otimes\left(2^{-m+1}A_{m-1}A_{m-1}^{*}\right)
=2^{-m+1}\left(v\otimes A_{m-1}\right)A_{m-1}^{*}
\]
so
\[
\left\|v\otimes I_{m}\right\|_{\MBanSp(X),m,m}
=\left\|2^{-m+1}\left(v\otimes A_{m-1}\right)A_{m-1}^{*}\right\|_{\MBanSp(X),m,m}
\]
\[\begin{array}{cl}
\leq	&	2^{-m+1}\left\|v\otimes A_{m-1}\right\|_{\MBanSp(X),m,2^{m-1}}\left\|A_{m-1}^{*}\right\|_{\mathbb{M}_{2^{m-1},m}}\\[10pt]
=	&	2^{-m+1}\left\|M_{m,2^{m-1}}\left(\eta_{X}\right)\left(A_{m-1}\right)\right\|_{\MBanSp(X),m,2^{m-1}}\left\|A_{m-1}\right\|_{\mathbb{M}_{m,2^{m-1}}}\\[10pt]
\leq	&	2^{-m+1}w_{X,m,2^{m-1}}\left(A_{m-1}\right)\left\|A_{m-1}\right\|_{\mathbb{M}_{m,2^{m-1}}}\\[10pt]
=	&	2^{-m+1}\left\|A_{m-1}\right\|_{\mathbb{M}_{m,2^{m-1}}}^{2}\\[10pt]
=	&	2^{-m+1}\cdot 2^{m-1}\\[10pt]
=	&	1.\\[10pt]
\end{array}\]
Thus, $\left\|v\otimes I_{m}\right\|_{\MBanSp(X),m,m}=1$ for all $m\in\mathbb{N}$.

Lastly, $\hat{\iota}$ is shown to be completely isometric and, thereby, an isomorphism in $\MBanC$.  For all $m,n\in\mathbb{N}$ and $B\in M_{m,n}(\MBanSp(X))$, there is a unique $\hat{B}\in\mathbb{M}_{m,n}$ such that $B=v\otimes\hat{B}$.  Thus,
\[
\left\|\hat{B}\right\|_{\MIN(\mathbb{C}),m,n}
=\left\|M_{m,n}\left(\hat{\iota}\right)\left(B\right)\right\|_{\MIN(\mathbb{C}),m,m}
\leq\left\|B\right\|_{\MBanSp(X),m,m}
\]
and
\[
\|B\|_{\MBanSp(X),m,n}
\leq\left\|v\otimes I_{m}\right\|_{\MBanSp(X),m,m}\left\|\hat{B}\right\|_{\mathbb{M}_{m,n}}
=1\cdot\left\|\hat{B}\right\|_{\MIN(\mathbb{C}),m,n}.
\]
Consequently,
\[
\left\|M_{m,n}\left(\hat{\iota}\right)(B)\right\|_{\MIN(\mathbb{C}),m,n}
=\|B\|_{\MBanSp(X),m,n}
\]
as desired.
\end{ex2}

\section{Matricial Banach Algebras}\label{mbanachalgebra}

This section considers algebras equipped with a matrix-norm compatible with matrix multiplication.  As an algebra is a vector space with a multiplication of vectors, the conventions for algebras used here will be analogous to the vector space conventions used previously.

\begin{defn2}[Matrix conventions, algebras]
For an algebra $\alg{A}$ and $m,n\in\mathbb{N}$, $M_{m,n}(\alg{A})$ is equipped with the same operations from being a vector space:  pointwise addition, pointwise scalar multiplication, and actions of scalar matrices on left and right.  Moreover, the action $M_{m,p}(\alg{A})\times M_{p,n}(\alg{A})\to M_{m,n}(\alg{A})$ will be by matrix multiplication for all $p\in\mathbb{N}$.
\end{defn2}

The notion of an $L^{\infty}$-matrix-normed algebra was introduced in \cite[Definition 1.4]{blecher1990}.  Here, the definition is generalized to consider rectangular matrices without the $L^{\infty}$-condition.

\begin{defn2}
A \emph{matrix-normed algebra} is a complex algebra $\alg{A}$ equipped with a matrix-norm $\left(\|\cdot\|_{\alg{A},m,n}\right)_{m,n\in\mathbb{N}}$ such that
\[
\left\|AB\right\|_{\alg{A},m,n}\leq\left\|A\right\|_{\alg{A},m,p}\left\|B\right\|_{\alg{A},p,n}
\]
for all $m,p,n\in\mathbb{N}$, $A\in M_{m,p}(\alg{A})$, and $B\in M_{p,n}(\alg{A})$.  A complete matrix-normed algebra is a \emph{matricial Banach algebra}.  Let $\MBanAlgB$ be the category of matricial Banach algebras with completely bounded algebra homomorphisms, and $\MBanAlgC$ be the category of matricial Banach algebras with completely contractive algebra homomorphisms.
\end{defn2}

The goal of this section will be to build examples of this structure from various existing structures.  Section \ref{amax-algebra} takes a Banach algebra and imbues it with an extremal matrix-norm.  Section \ref{haageruptensoralgebra} takes a matricial Banach space and creates a matrix-norm on the tensor algebra.  Section \ref{free-product} concludes by proving the existence of a free product of matricial Banach algebras using array-weighted sets.

\subsection{The Absolute Maximum Matricial Banach Algebra}\label{amax-algebra}

An algebra $\alg{A}$ with a matrix-norm is a normed algebra when stripped of all its matrix-norms, except for the norm on $M_{1,1}(\alg{A})\cong\alg{A}$.  For a Banach algebra, one would like to extend its existing norm to a matrix norm.  As with Banach spaces, many such extensions exist, but $\AMAX$ happens to give a matricial Banach algebra structure, as well as being the maximal such structure.

\begin{lem}
Given a Banach algebra $\alg{A}$, $\AMAX(\alg{A})$ is a matricial Banach algebra.
\end{lem}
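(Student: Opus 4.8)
The plan is to verify the single missing ingredient: the submultiplicativity of the matrix-norm, since $\AMAX(\alg{A})$ is already a matricial Banach space by the results of Section \ref{matrix-normed} (it is $\alg{A}\otimes\mathbb{M}_{n,m}^{*}$ with the projective tensor norm, hence complete, and its underlying norm is the Banach algebra norm on $\alg{A}$). So the whole content is the estimate
\[
\|AB\|_{\AMAX(\alg{A}),m,n}\leq\|A\|_{\AMAX(\alg{A}),m,p}\,\|B\|_{\AMAX(\alg{A}),p,n}
\]
for $A\in M_{m,p}(\alg{A})$, $B\in M_{p,n}(\alg{A})$.

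First I would unwind the definitions. Fix $\epsilon>0$ and choose representations $A=\sum_{s=1}^{q}a_{s}\otimes C_{s}$ with $C_{s}\in\mathbb{M}_{m,p}$ and $B=\sum_{t=1}^{r}b_{t}\otimes D_{t}$ with $D_{t}\in\mathbb{M}_{p,n}$ achieving the infimum in the definition of $\|A\|_{\AMAX(\alg{A}),m,p}$ and $\|B\|_{\AMAX(\alg{A}),p,n}$ up to a multiplicative factor $(1+\epsilon)$. The key computational step is to observe that the matrix product $AB\in M_{m,n}(\alg{A})$ is exactly $\sum_{s=1}^{q}\sum_{t=1}^{r}(a_{s}b_{t})\otimes(C_{s}D_{t})$ — this is the standard bilinearity computation (identical in spirit to the one already performed in the proof of the theorem ``$\AMAX$, $\hat\otimes$, $\otimes_{h}$''), where one checks entrywise that the $(i,k)$-entry of the matrix product of $\sum a_{s}\otimes C_{s}$ with $\sum b_{t}\otimes D_{t}$ equals $\sum_{s,t}\sum_{j}a_{s}b_{t}\,C_{s}(i,j)D_{t}(j,k)=\sum_{s,t}(a_{s}b_{t})\otimes(C_{s}D_{t})$ evaluated at $(i,k)$. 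Then $\sum_{s,t}(a_{s}b_{t})\otimes(C_{s}D_{t})$ is one admissible representation of $AB$, so by definition of the infimum,
\[
\|AB\|_{\AMAX(\alg{A}),m,n}\leq\sum_{s=1}^{q}\sum_{t=1}^{r}\|a_{s}b_{t}\|_{\alg{A}}\,\|C_{s}D_{t}\|_{\mathbb{M}_{n,m}^{*}}.
\]
Now apply submultiplicativity of the Banach algebra norm, $\|a_{s}b_{t}\|_{\alg{A}}\leq\|a_{s}\|_{\alg{A}}\|b_{t}\|_{\alg{A}}$, and submultiplicativity of the trace-norm with respect to composition, $\|C_{s}D_{t}\|_{\mathbb{M}_{n,m}^{*}}\leq\|C_{s}\|_{\mathbb{M}_{p,m}^{*}}\|D_{t}\|_{\mathbb{M}_{n,p}^{*}}$ — exactly the inequality used in the tensor-product theorem's proof. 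The double sum then factors:
\[
\|AB\|_{\AMAX(\alg{A}),m,n}\leq\left(\sum_{s=1}^{q}\|a_{s}\|_{\alg{A}}\|C_{s}\|_{\mathbb{M}_{p,m}^{*}}\right)\left(\sum_{t=1}^{r}\|b_{t}\|_{\alg{A}}\|D_{t}\|_{\mathbb{M}_{n,p}^{*}}\right)\leq(1+\epsilon)^{2}\|A\|_{\AMAX(\alg{A}),m,p}\|B\|_{\AMAX(\alg{A}),p,n}.
\]
Letting $\epsilon\to 0$ gives the result.

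The main obstacle — really the only non-mechanical point — is making sure that the trace-norm (the norm on $\mathbb{M}_{n,m}^{*}$ under the identification with $\mathbb{M}_{m,n}$) behaves submultiplicatively in the correct index pattern, i.e. that $\|CD\|_{\mathbb{M}_{n,m}^{*}}\leq\|C\|_{\mathbb{M}_{p,m}^{*}}\|D\|_{\mathbb{M}_{n,p}^{*}}$ when $C\in\mathbb{M}_{m,p}$ and $D\in\mathbb{M}_{p,n}$. This is standard: the trace norm of a matrix is the sum of its singular values, and it is both submultiplicative and an operator ideal norm (for any operator norms on the ambient spaces), so $\|CD\|_{1}\leq\|C\|_{1}\|D\|_{1}$; in any case the identical inequality is invoked without comment in the proof of the ``$\AMAX$, $\hat\otimes$, $\otimes_{h}$'' theorem above, so I would simply cite that usage. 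Everything else — that the axioms of a matrix-norm already hold, that completeness already holds, that the underlying norm is the algebra norm — is inherited from $\AMAX$ being a matricial Banach space, so no further work is needed there. I would remark at the end that this also shows $\AMAX(\alg{A})$ is the maximal matricial Banach algebra structure extending the given norm, since by the Maximality of $\AMAX$ lemma it is already the maximal matrix-norm, and it is in particular submultiplicative.
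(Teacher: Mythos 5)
Your proposal is correct and follows essentially the same route as the paper: reduce to sub-multiplicativity, expand $A$ and $B$ as sums of elementary tensors, compute $AB=\sum_{s,t}(a_{s}b_{t})\otimes(C_{s}D_{t})$, apply sub-multiplicativity of the algebra norm and of the trace norm, and factor the resulting double sum. The only cosmetic difference is that you pass to the infima via an $\epsilon$-optimal representation and let $\epsilon\to 0$, whereas the paper simply takes the two infima at the end; these are the same argument.
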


\begin{proof}

By \cite[Theorem 2.1]{effros1988}, $\AMAX(\alg{A})$ is already a matricial Banach space in addition to being a complex algebra.  All that remains is to prove sub-multiplicativity of the matrix-norm.  To that end, let $m,p,n\in\mathbb{N}$, $A\in M_{m,p}(\alg{A})$, and $B\in M_{p,n}(\alg{A})$.  Write $A=\sum_{l=1}^{j}a_{l}\otimes C_{l}$ and $B=\sum_{k=1}^{q}b_{k}\otimes D_{k}$.  Then,
\[
AB=\sum_{l=1}^{j}\sum_{k=1}^{q}\left(a_{l}b_{k}\right)\otimes\left(C_{l}D_{k}\right),
\]
so
\[\begin{array}{rcl}
\left\|AB\right\|_{\AMAX(\alg{A}),m,n}
&	\leq	&	\sum_{l=1}^{j}\sum_{k=1}^{q}\left\|a_{l}b_{k}\right\|_{\alg{A}}\left\|C_{l}D_{k}\right\|_{\mathbb{M}_{n,m}^{*}}\\[12pt]
&	\leq	&	\sum_{l=1}^{j}\sum_{k=1}^{q}\left\|a_{l}\right\|_{\alg{A}}\left\|b_{k}\right\|_{\alg{A}}\left\|C_{l}\right\|_{\mathbb{M}_{p,m}^{*}}\left\|D_{k}\right\|_{\mathbb{M}_{n,p}^{*}}\\[12pt]
&	=	&	\left(\sum_{l=1}^{j}\left\|a_{l}\right\|_{\alg{A}}\left\|C_{l}\right\|_{\mathbb{M}_{p,m}^{*}}\right)\left(\sum_{k=1}^{q}\left\|b_{k}\right\|_{\alg{A}}\left\|D_{k}\right\|_{\mathbb{M}_{n,p}^{*}}\right).\\[12pt]
\end{array}\]
Two infima then yield
\[
\left\|AB\right\|_{\AMAX(\alg{A}),m,n}\leq\left\|A\right\|_{\AMAX(\alg{A}),m,p}\left\|B\right\|_{\AMAX(\alg{A}),p,n}.
\]

\end{proof}

Thus, an adaptation of Theorem \ref{AMAX-prop} shows that $\AMAX$ serves as left adjoint to a second forgetful functor.

\begin{thm}[Universal property of $\AMAX$, algebra version]
Let $\BanAlgB$ denote the category of Banach algebras with bounded algebra homomorphisms, and $F_{\MBanAlgB}^{\BanAlgB}:\MBanAlgB\to\BanAlgB$ be the forgetful functor stripping all matrix-norm structure except the underlying norm.  For a Banach algebra $\alg{A}$ and a matricial Banach algebra $\alg{B}$, consider a bounded algebra homomorphism $\phi:\alg{A}\to F_{\MBanAlgB}^{\BanAlgB}(\alg{B})$.  Then, there is a unique completely bounded algebra homomorphism $\hat{\phi}:\AMAX(\alg{A})\to\alg{B}$ such that $F_{\MBanAlgB}^{\BanAlgB}\left(\hat{\phi}\right)=\phi$.  Moreover,
\[
\left\|\hat{\phi}\right\|_{\CB(\AMAX(\alg{A}),\alg{B})}=\|\phi\|_{\B\left(\alg{A},F_{\MBanAlgB}^{\BanAlgB}(\alg{B})\right)}.
\]
\end{thm}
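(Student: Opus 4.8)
The plan is to obtain $\hat{\phi}$ from the space-level statement, Theorem~\ref{AMAX-prop}, and then observe that multiplicativity is automatic because $\AMAX$ alters nothing about the underlying algebra. First I would forget the multiplicative structure on both sides and regard $\phi$ simply as a bounded linear map from the Banach space $\alg{A}$ to $F_{\MBanB}^{\BanB}(\alg{B})$; this is legitimate because the underlying Banach space of $F_{\MBanAlgB}^{\BanAlgB}(\alg{B})$ coincides with that of $F_{\MBanB}^{\BanB}(\alg{B})$, i.e.\ the relevant square of forgetful functors commutes. Applying Theorem~\ref{AMAX-prop} then produces a unique completely bounded linear map $\hat{\phi}:\AMAX(\alg{A})\to\alg{B}$ with $F_{\MBanB}^{\BanB}\left(\hat{\phi}\right)=\phi$ as linear maps and $\left\|\hat{\phi}\right\|_{\CB(\AMAX(\alg{A}),\alg{B})}=\|\phi\|_{\B(\alg{A},F_{\MBanB}^{\BanB}(\alg{B}))}$, which is already the asserted norm identity once the forgetful functors are matched up.

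Second I would check that $\hat{\phi}$ is an algebra homomorphism. By the preceding lemma, $\AMAX(\alg{A})$ is a genuine matricial Banach algebra, and its multiplication on $M_{1,1}(\AMAX(\alg{A}))\cong\AMAX(\alg{A})$ is precisely the multiplication of $\alg{A}$; likewise $\hat{\phi}$ has the same underlying function as $\phi$. Hence for $a,b\in\alg{A}$ one has $\hat{\phi}(ab)=\phi(ab)=\phi(a)\phi(b)=\hat{\phi}(a)\hat{\phi}(b)$, so $\hat{\phi}$ is a completely bounded algebra homomorphism and $F_{\MBanAlgB}^{\BanAlgB}\left(\hat{\phi}\right)=\phi$ as morphisms of $\BanAlgB$. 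For uniqueness, any completely bounded algebra homomorphism $\psi:\AMAX(\alg{A})\to\alg{B}$ with $F_{\MBanAlgB}^{\BanAlgB}(\psi)=\phi$ is in particular a completely bounded linear map lifting $\phi$, so $\psi=\hat{\phi}$ by the uniqueness clause of Theorem~\ref{AMAX-prop}.

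The main obstacle is essentially that there is none: all of the analytic content already lives in Theorem~\ref{AMAX-prop} and in the preceding lemma establishing that $\AMAX(\alg{A})\in\MBanAlgB$. The only point requiring care is the bookkeeping with the commuting square of forgetful functors, which guarantees that the phrases ``$\hat{\phi}$ and $\phi$ are the same underlying function'' and ``the two operator norms agree'' are literally meaningful; this verification is routine and I would dispatch it in a single line. An essentially identical argument (replacing ``completely bounded'' by ``completely contractive'' throughout) would also give the contractive version for $\MBanAlgC$, should it be needed later.
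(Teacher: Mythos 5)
Your proposal is correct and follows essentially the paper's intended route: the paper simply remarks that the proof is an adaptation of Theorem~\ref{AMAX-prop} (``nearly identical to \cite[Exercise 14.1]{paulsen}''), which is exactly your reduction to the space-level universal property together with the preceding lemma that $\AMAX(\alg{A})$ is a matricial Banach algebra, with multiplicativity and uniqueness transferring automatically since $\hat{\phi}$ and $\phi$ share the same underlying function.
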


Again, the proof of the above theorem is nearly identical to \cite[Exercise 14.1]{paulsen}.

\subsection{Haagerup Tensor Algebra}\label{haageruptensoralgebra}

A matrix-normed algebra is a matrix-normed space when stripped of its multiplicative structure.  For a matricial Banach space, one would like to construct a multiplicative structure much like the Banach tensor algebra.  Indeed, this can be accomplished by merging the matricial $\ell^{1}$-direct sum and Haagerup tensor product.

\begin{defn}
Given a matricial Banach space $V$, inductively define the \emph{Haagerup tensor powers} of $V$ in the following way:
\[\begin{array}{ccc}
V^{\otimes_{h}1}:=V,	&	V^{\otimes_{h}(n+1)}:=\left(V^{\otimes_{h}n}\right)\otimes_{h}V & \forall n\in\mathbb{N}.
\end{array}\]
The \emph{Haagerup tensor algebra} of $V$ is
\[
\Th(V):={\coprod_{n\in\mathbb{N}}}^{\MBanC}V^{\otimes_{h}n},
\]
the matricial $\ell^{1}$-direct sum of these Haagerup tensor powers, equipped with the multiplication is determined by the canonical isomorphisms $\left(V^{\otimes_{h}m}\right)\otimes_{h}\left(V^{\otimes_{h}n}\right)\to V^{\otimes_{h}(m+n)}$.  Define $\epsilon_{V}:V\to\Th(V)$ to be the inclusion map into the first tensor power of $V$ in $\Th(V)$.
\end{defn}

\begin{lem}
Given a matricial Banach space $V$, $\Th(V)$ is a matricial Banach algebra.
\end{lem}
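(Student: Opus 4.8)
The plan is to verify that $\Th(V)$, which is already a matricial Banach space by construction as the matricial $\ell^1$-direct sum of the Haagerup tensor powers, carries a multiplication satisfying the submultiplicativity axiom for matrix-normed algebras and is complete. Completeness comes for free, since $\Th(V)$ is a matricial $\ell^1$-direct sum of complete spaces (each $V^{\otimes_h n}$ is completed by definition of $\otimes_h$), so by Theorem \ref{MBan-coproduct} it is a matricial Banach space. The real content is the submultiplicativity estimate
\[
\|AB\|_{\Th(V),m,n}\leq\|A\|_{\Th(V),m,p}\|B\|_{\Th(V),p,n}
\]
for all $m,p,n\in\mathbb{N}$, $A\in M_{m,p}(\Th(V))$, $B\in M_{p,n}(\Th(V))$.

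First I would pin down the multiplication precisely. Using the canonical completely isometric isomorphisms $\mu_{m,n}:\left(V^{\otimes_h m}\right)\otimes_h\left(V^{\otimes_h n}\right)\to V^{\otimes_h(m+n)}$ (which are completely isometric by associativity of $\otimes_h$, \cite[Proposition 3.1]{blecher1991}), the product on $\Th(V)=\coprod_{n}^{\MBanC}V^{\otimes_h n}$ is the bilinear extension of the maps $V^{\otimes_h m}\times V^{\otimes_h n}\to\Th(V)$ sending $(\xi,\zeta)\mapsto\varpi_{m+n}(\mu_{m,n}(\xi\otimes\zeta))$, where $\varpi_k$ is the inclusion of the $k$-th summand. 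I would then observe, as the key lemma, that for each fixed pair $(m,n)$ the multiplication map $V^{\otimes_h m}\times V^{\otimes_h n}\to V^{\otimes_h(m+n)}$ is a completely contractive bilinear map: this is exactly what $\mu_{m,n}$ being completely isometric gives, combined with the defining universal property of $\otimes_h$ in Theorem \ref{haagerup}. Concretely, for arrays $P\in M_{m',p'}(V^{\otimes_h m})$ and $R\in M_{p',n'}(V^{\otimes_h n})$, the $\odot$-matrix product satisfies $\|P\odot_{\text{mult}}R\|_{V^{\otimes_h(m+n)},m',n'}\leq\|P\|_{V^{\otimes_h m},m',p'}\|R\|_{V^{\otimes_h n},p',n'}$.

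Next I would reduce the general submultiplicativity to this graded case using the $\ell^1$-structure. Given $A\in M_{m,p}(\Th(V))$, write $A=\sum_{k}A_k$ where $A_k$ has all entries in the $k$-th summand $V^{\otimes_h k}$; by definition of the matricial $\ell^1$-direct sum norm, $\|A\|_{\Th(V),m,p}=\sum_k\|A_k\|_{V^{\otimes_h k},m,p}$, and similarly $B=\sum_l B_l$ with $\|B\|_{\Th(V),p,n}=\sum_l\|B_l\|_{V^{\otimes_h l},p,n}$. The product $AB$ then decomposes as $AB=\sum_r C_r$ with $C_r=\sum_{k+l=r}A_k B_l$ living in $V^{\otimes_h r}$, where $A_k B_l$ means the matrix product computed using the graded multiplication $V^{\otimes_h k}\times V^{\otimes_h l}\to V^{\otimes_h(k+l)}$ — this is precisely a $\odot_\phi$-product. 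Applying the graded estimate and the triangle inequality, $\|C_r\|_{V^{\otimes_h r},m,n}\leq\sum_{k+l=r}\|A_k\|_{V^{\otimes_h k},m,p}\|B_l\|_{V^{\otimes_h l},p,n}$, so
\[
\|AB\|_{\Th(V),m,n}=\sum_r\|C_r\|_{V^{\otimes_h r},m,n}\leq\sum_k\sum_l\|A_k\|_{V^{\otimes_h k},m,p}\|B_l\|_{V^{\otimes_h l},p,n}=\|A\|_{\Th(V),m,p}\|B\|_{\Th(V),p,n},
\]
as desired. One should also check associativity and that the algebra is genuinely a complex algebra, but this follows from coherence of the canonical isomorphisms $\mu_{m,n}$ under the associativity of $\otimes_h$, just as for the Banach tensor algebra.

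The main obstacle, I expect, is being careful about the bookkeeping when passing from the "one $\odot$-term at a time" estimate for a single Haagerup tensor product to matrix products of elements that are themselves sums over many tensor-power summands: one must confirm that matrix multiplication in $\Th(V)$, which a priori is defined entrywise using the algebra multiplication of $\Th(V)$, agrees after regrouping with the family of graded $\odot_\phi$-products, and that the $\ell^1$-summation over the grading interacts correctly with the supremum/infimum defining each $\|\cdot\|_{V^{\otimes_h r},m,n}$. Once the indexing is set up cleanly — essentially Fubini on the double sum over $k$ and $l$ — the inequality is forced. A secondary subtlety is ensuring the graded multiplication really is \emph{completely} contractive and not merely contractive at the $1\times1$ level; this is where the completely isometric nature of the associativity isomorphism, rather than mere isometry, is used.
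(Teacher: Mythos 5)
Your proposal is correct and follows essentially the same route as the paper: decompose $A$ and $B$ into their graded components via the $\ell^{1}$-direct sum structure (the paper uses the coordinate projections $\pi_{q}$, $\pi_{r}$ where you write $A=\sum_{k}A_{k}$), bound each cross term by the Haagerup estimate $\left\|P\odot R\right\|\leq\|P\|\,\|R\|$ transported through the canonical identification with $V^{\otimes_{h}(q+r)}$, and resum using the $\ell^{1}$-norm. Your grouping by total degree before summing is only a cosmetic reorganization of the paper's double sum over $q$ and $r$.
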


\begin{proof}

By construction, $\Th(V)$ is a matricial Banach space, and one can check that the tensor multiplication makes $\Th(V)$ into a complex algebra.  All that remains to show is that the matrix-norm is sub-multiplicative on matrices.

To that end, let $m,p,n\in\mathbb{N}$, $A\in M_{m,p}\left(\Th(V)\right)$, and $B\in M_{p,n}\left(\Th(V)\right)$.  Write $A(j,l)=\sum_{q=1}^{\infty}x_{j,l,q}$ and $B(l,k)=\sum_{r=1}^{\infty}y_{l,k,r}$, where $x_{j,l,q},y_{l,k,q}\in V^{\otimes_{h}q}$ for all $1\leq j\leq m$, $1\leq l\leq p$, $1\leq k\leq n$, and $q\in\mathbb{N}$.  Then,
\[\begin{array}{rcl}
(AB)(j,k)
&	=	&	\sum_{l=1}^{p}A(j,l)B(l,k)\\[20pt]
&	=	&	\sum_{q=1}^{\infty}\sum_{r=1}^{\infty}\sum_{l=1}^{p}x_{j,l,q}\otimes y_{l,k,r}\\[20pt]
&	=	&	\sum_{q=1}^{\infty}\sum_{r=1}^{\infty}\left(M_{m,p}\left(\pi_{q}\right)(A)\odot M_{p,n}\left(\pi_{r}\right)(B)\right)(j,k),\\[20pt]
\end{array}\]
where $\pi_{q}$ and $\pi_{r}$ are the coordinate projections of the $\ell^{1}$-sum.  Then,
\[\begin{array}{rcl}
\left\|AB\right\|_{\Th(V),m,n}
&	\leq	&	\sum_{q=1}^{\infty}\sum_{r=1}^{\infty}\left\|M_{m,p}\left(\pi_{q}\right)(A)\odot M_{p,n}\left(\pi_{r}\right)(B)\right\|_{V^{\otimes_{h}(q+r)},m,n}\\[20pt]
\end{array}\]
\[\begin{array}{rcl}
&	\leq	&	\sum_{q=1}^{\infty}\sum_{r=1}^{\infty}\left\|M_{m,p}\left(\pi_{q}\right)(A)\right\|_{V^{\otimes_{h}q},m,p}\left\|M_{p,n}\left(\pi_{r}\right)(B)\right\|_{V^{\otimes_{h}r},p,n}\\[20pt]
&	=	&	\left(\sum_{q=1}^{\infty}\left\|M_{m,p}\left(\pi_{q}\right)(A)\right\|_{V^{\otimes_{h}q},m,p}\right)\left(\sum_{r=1}^{\infty}\left\|M_{p,n}\left(\pi_{r}\right)(B)\right\|_{V^{\otimes_{h}r},p,n}\right)\\[20pt]
&	=	&	\left\|A\right\|_{\Th(V),m,p}\left\|B\right\|_{\Th(V),p,n}.\\[20pt]
\end{array}\]

\end{proof}

So constructed, $\Th(V)$ enjoys the following universal property, analogous to \cite[Satz 1]{leptin1969}.

\begin{thm}[Universal property of the Haagerup tensor algebra]\label{universal-prop-haagerup}
Let
\[
F_{\MBanAlgC}^{\MBanC}:\MBanAlgC\to\MBanC
\]
be the forgetful functor stripping multiplicative structure.  For a matricial Banach space $V$ and a matricial Banach algebra $\alg{B}$, consider a completely contractive linear map $\phi:V\to F_{\MBanAlgC}^{\MBanC}(\alg{B})$.  Then, there is a unique completely contractive algebra homomorphism $\hat{\phi}:\Th(V)\to\alg{B}$ such that $F_{\MBanAlgC}^{\MBanC}\left(\hat{\phi}\right)\circ\epsilon_{V}=\phi$.
\end{thm}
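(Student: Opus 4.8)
The plan is to build $\hat\phi$ level-by-level on the Haagerup tensor powers, using the universal property of the matricial $\ell^1$-direct sum (Theorem \ref{MBan-coproduct}) to glue the pieces together. First I would construct, for each $n\in\mathbb{N}$, a completely contractive linear map $\phi_n : V^{\otimes_h n}\to F_{\MBanAlgC}^{\MBanC}(\alg{B})$ by induction. For $n=1$, set $\phi_1:=\phi$. Given $\phi_n$, consider the bilinear map $V^{\otimes_h n}\times V\to\alg{B}$ sending $(u,v)\mapsto \phi_n(u)\phi_1(v)$, the product taken in $\alg{B}$. The submultiplicativity axiom of the matrix-norm on $\alg{B}$, combined with the fact that $\phi_n$ and $\phi_1$ are completely contractive, shows this bilinear map is completely bounded with $\|\cdot\|_{\CB}\le 1$: indeed for $A\in M_{m,p}(V^{\otimes_h n})$ and $B\in M_{p,q}(V)$, the $\phi$-matrix product $A\odot_{\psi}B$ equals $M_{m,p}(\phi_n)(A)\cdot M_{p,q}(\phi_1)(B)$ as an ordinary matrix product in $M_{m,q}(\alg{B})$, so its norm is bounded by $\|M_{m,p}(\phi_n)(A)\|\,\|M_{p,q}(\phi_1)(B)\|\le\|A\|\,\|B\|$. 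By Theorem \ref{haagerup} (universal property of $\otimes_h$), this bilinear map induces a completely contractive linear map $\phi_{n+1}:(V^{\otimes_h n})\otimes_h V=V^{\otimes_h(n+1)}\to F_{\MBanAlgC}^{\MBanC}(\alg{B})$.

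Since $\sup_n\|\phi_n\|_{\CB}\le 1<\infty$, Theorem \ref{MBan-coproduct} gives a unique completely bounded (indeed completely contractive) linear map $\hat\phi:\Th(V)={\coprod_{n}}^{\MBanC}V^{\otimes_h n}\to\alg{B}$ with $\hat\phi\circ\varpi_n=\phi_n$ for every $n$. In particular $\hat\phi\circ\epsilon_V=\hat\phi\circ\varpi_1=\phi_1=\phi$, giving $F_{\MBanAlgC}^{\MBanC}(\hat\phi)\circ\epsilon_V=\phi$ as required. It remains to check that $\hat\phi$ is an algebra homomorphism. Because multiplication in both $\Th(V)$ and $\alg{B}$ is separately continuous and bilinear, and elements of $\Th(V)$ are norm-limits of finite sums of elementary tensors drawn from the various $V^{\otimes_h n}$, it suffices to verify $\hat\phi(\xi\cdot\zeta)=\hat\phi(\xi)\hat\phi(\zeta)$ for $\xi\in V^{\otimes_h m}$ and $\zeta\in V^{\otimes_h n}$, i.e. $\phi_{m+n}(\xi\cdot\zeta)=\phi_m(\xi)\phi_n(\zeta)$ under the canonical identification $(V^{\otimes_h m})\otimes_h(V^{\otimes_h n})\cong V^{\otimes_h(m+n)}$. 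This is proved by induction on $n$: the case $n=1$ is exactly how $\phi_{m+1}$ was defined via Theorem \ref{haagerup}, and the inductive step uses associativity of $\otimes_h$ (\cite[Proposition 3.1]{blecher1991}) together with the inductive construction of $\phi_{m+n+1}$ from $\phi_{m+n}$ and $\phi_1$, unwinding the definitions on elementary tensors and extending by linearity and continuity. Uniqueness of $\hat\phi$ follows since any completely contractive homomorphism agreeing with $\phi$ on $\epsilon_V(V)$ must, by multiplicativity, agree with $\phi_n$ on each $V^{\otimes_h n}$ (which is spanned by products of elements of $V$), hence agree with $\hat\phi$ on a dense subspace of $\Th(V)$ by Theorem \ref{MBan-coproduct}.

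The main obstacle I anticipate is the bookkeeping in the multiplicativity check: one must be careful that the canonical isomorphism $(V^{\otimes_h m})\otimes_h(V^{\otimes_h n})\to V^{\otimes_h(m+n)}$ used to define the multiplication on $\Th(V)$ is compatible with the way $\phi_{m+n}$ was assembled by iterated applications of Theorem \ref{haagerup}, and this compatibility is precisely where associativity of the Haagerup tensor product is invoked. Everything else — complete contractivity of each $\phi_n$, the gluing, the factorization through $\epsilon_V$ — is a routine application of the universal properties already established in Sections \ref{matrix-normed-constructions} and here.
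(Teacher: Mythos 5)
Your proposal is correct and follows essentially the same route as the paper's own proof: inductively build the completely contractive maps $\phi_{n}$ on the Haagerup tensor powers via the bilinear map $(u,v)\mapsto\phi_{n}(u)\phi(v)$ and Theorem \ref{haagerup}, glue them with Theorem \ref{MBan-coproduct}, and verify multiplicativity on elementary tensors by induction before extending by linearity and continuity. The only cosmetic difference is that you make the role of associativity of $\otimes_{h}$ in the multiplicativity bookkeeping explicit, which the paper leaves implicit.
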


\begin{proof}

First, completely contractive maps are inductively constructed on the Haagerup tensor powers of $V$.  Let $\phi_{1}:=\phi$ be regarded as a completely contractive linear map from $V^{\otimes_{h}1}$ to $\alg{B}$.  For induction, assume for some $n\in\mathbb{N}$ that there are completely contractive linear maps $\phi_{j}:V^{\otimes_{h}j}\to\alg{B}$ for all $1\leq j\leq n$.  Define $\varphi:V^{\otimes_{h}n}\times V\to\alg{B}$ by $\varphi(w,v):=\phi_{n}(w)\cdot\phi(v)$, which is readily seen to be bilinear.  Consider $m,p,q\in\mathbb{N}$, $A\in M_{m,p}\left(V^{\otimes_{h}n}\right)$ and $B\in M_{p,q}(V)$.  Then,
\[\begin{array}{rcl}
(A\odot_{\varphi}B)(i,k)
&	=	&	\sum_{l=1}^{p}\varphi\left(A(i,l),B(l,k)\right)\\[15pt]
&	=	&	\sum_{l=1}^{p}\phi_{n}\left(A(i,l)\right)\cdot\phi\left(B(l,k)\right)\\[15pt]
&	=	&	\left(M_{m,p}\left(\phi_{n}\right)(A)\cdot M_{p,q}(\phi)(B)\right)(i,k)\\[15pt]
\end{array}\]
for all $1\leq i\leq m$ and $1\leq k\leq q$.  Hence,
\[\begin{array}{rcl}
\left\|A\odot_{\varphi}B\right\|_{\alg{B},m,q}
&	=	&	\left\|M_{m,p}\left(\phi_{n}\right)(A)\cdot M_{p,q}(\phi)(B)\right\|_{\alg{B},m,q}\\[15pt]
&	\leq	&	\left\|M_{m,p}\left(\phi_{n}\right)(A)\right\|_{\alg{B},m,p}\left\|M_{p,q}(\phi)(B)\right\|_{\alg{B},p,q}\\[15pt]
&	\leq	&	\left\|A\right\|_{V^{\otimes_{h}n},m,p}\left\|B\right\|_{V,p,q}.\\[15pt]
\end{array}\]
Thus, $\varphi$ is completely contractive bilinear.  By Theorem \ref{haagerup}, there is a unique completely contractive linear map $\phi_{n+1}:V^{\otimes(n+1)}\to\alg{B}$ such that $\phi_{n+1}(w\otimes v)=\phi_{n}(w)\cdot\phi(v)$.

Next, the maps are combined using the matricial $\ell^{1}$-direct sum.  By Theorem \ref{MBan-coproduct}, there is a unique completely contractive linear map $\hat{\phi}:\Th(V)\to\alg{B}$ such that $\hat{\phi}\circ\varpi_{n}=\phi_{n}$ for all $n\in\mathbb{N}$.  In particular, notice that $\epsilon_{V}=\varpi_{1}$, so $\hat{\phi}\circ\epsilon_{V}=\phi_{1}=\phi$ as desired.  Uniqueness of $\hat{\phi}$ arises from the universal properties of $\otimes_{h}$ and ${\coprod}^{\MBanC}$.

All that remains is to show that $\hat{\phi}$ is multiplicative.  Given $v,w\in V$, note that
\[
\hat{\phi}(v\otimes w)
=\phi_{2}(v\otimes w)
=\phi(v)\cdot\phi(w)
=\hat{\phi}(v)\cdot\hat{\phi}(w)
\]
by construction of $\hat{\phi}$ and $\phi_{2}$.  For induction, assume that for some $n\in\mathbb{N}$,
\[
\hat{\phi}\left(v_{1}\otimes\cdots\otimes v_{n}\right)=\hat{\phi}\left(v_{1}\right)\cdots\hat{\phi}\left(v_{n}\right).
\]
for all $\left(v_{j}\right)_{j=1}^{n}\subset V$.  For any $w\in V$,
\[\begin{array}{rcl}
\hat{\phi}\left(v_{1}\otimes\cdots\otimes v_{n}\otimes w\right)
&	=	&	\phi_{n+1}\left(v_{1}\otimes\cdots\otimes v_{n}\otimes w\right)\\[10pt]
&	=	&	\phi_{n}\left(v_{1}\otimes\cdots\otimes v_{n}\right)\cdot\phi\left(w\right)\\[10pt]
&	=	&	\hat{\phi}\left(v_{1}\right)\cdots\hat{\phi}\left(v_{n}\right)\cdot\phi\left(w\right)\\[10pt]
&	=	&	\hat{\phi}\left(v_{1}\right)\cdots\hat{\phi}
\left(v_{n}\right)\cdot\hat{\phi}\left(w\right)\\[10pt]
\end{array}\]
by construction of $\hat{\phi}$ and $\phi_{n+1}$.  Linearity and continuity of $\hat{\phi}$ extend this multiplicativity from elementary tensors to all elements of $\Th(V)$.

\end{proof}

As a left adjoint functor, $\Th$ behaves well with coproducts and other left adjoints.  Let $F^{\BanC}_{\MBanC}$ and $F_{\MBanAlgC}^{\BanAlgC}$ be the completely contractive variations of the forgetful functors stripping all norms, except the underlying norm.  Composing forgetful functors, observe that
\[
F^{\BanC}_{\MBanC}\circ F_{\MBanAlgC}^{\MBanC}=F^{\BanC}_{\BanAlgC}\circ F_{\MBanAlgC}^{\BanAlgC}.
\]
By the composition of left adjoints, both $\Th\circ\AMAX$ and $\AMAX\circ\mathcal{T}$ qualify as a left adjoint to the forgetful functor composition.  By uniqueness of left adjoints, these two functors must be naturally isomorphic.

\begin{cor}\label{th-amax}
Given a Banach space $V$,
\[
\Th(\AMAX(V))
\cong_{\MBanAlgC}
\AMAX(\mathcal{T}(V)).
\]
\end{cor}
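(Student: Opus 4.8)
The plan is to deduce the isomorphism formally from the uniqueness of left adjoints, making precise the remark that precedes the statement. First I would present $\AMAX\circ\mathcal{T}$ as a composite of left adjoints: the universal property of the Banach tensor algebra exhibits $\mathcal{T}$ as a left adjoint of $F_{\BanAlgC}^{\BanC}$, and the completely contractive form of the algebra version of the universal property of $\AMAX$ exhibits $\AMAX:\BanAlgC\to\MBanAlgC$ as a left adjoint of $F_{\MBanAlgC}^{\BanAlgC}$; hence $\AMAX\circ\mathcal{T}:\BanC\to\MBanAlgC$ is a left adjoint of $F_{\BanAlgC}^{\BanC}\circ F_{\MBanAlgC}^{\BanAlgC}$. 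Symmetrically, the completely contractive form of the space version of the universal property of $\AMAX$ (Theorem \ref{AMAX-prop}) exhibits $\AMAX:\BanC\to\MBanC$ as a left adjoint of $F_{\MBanC}^{\BanC}$, and Theorem \ref{universal-prop-haagerup} exhibits $\Th$ as a left adjoint of $F_{\MBanAlgC}^{\MBanC}$, so $\Th\circ\AMAX:\BanC\to\MBanAlgC$ is a left adjoint of $F_{\MBanC}^{\BanC}\circ F_{\MBanAlgC}^{\MBanC}$.

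The decisive point is the strict equality of forgetful functors $F_{\MBanC}^{\BanC}\circ F_{\MBanAlgC}^{\MBanC}=F_{\BanAlgC}^{\BanC}\circ F_{\MBanAlgC}^{\BanAlgC}$ recorded just before the statement: both send a matricial Banach algebra to its underlying Banach space, forgetting at once the multiplication and every matrix-norm above the first level, and act as the identity on morphisms. Thus $\Th\circ\AMAX$ and $\AMAX\circ\mathcal{T}$ are both left adjoints of one and the same functor $\MBanAlgC\to\BanC$. Since a functor determines its left adjoint up to natural isomorphism, there is a natural isomorphism $\Th\circ\AMAX\cong\AMAX\circ\mathcal{T}$, and evaluating it at the Banach space $V$ yields $\Th(\AMAX(V))\cong_{\MBanAlgC}\AMAX(\mathcal{T}(V))$, which is the assertion.

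There is essentially no obstacle here; the only items requiring (routine) checking are that each of the four object-wise universal properties invoked is a genuine adjunction — equivalently, that the associated universal arrows ($\iota_V$ for $\mathcal{T}$, $\epsilon_V$ for $\Th$, and the identity maps serving as units for $\AMAX$, since $\AMAX$ leaves the underlying Banach space unchanged) are natural in their argument, which is immediate from the definitions — and that the two forgetful composites coincide as functors rather than merely up to isomorphism, which is equally clear. If an explicit description of the isomorphism were wanted, one could instead feed $\iota_V:V\to\mathcal{T}(V)$ and $\epsilon_{\AMAX(V)}:\AMAX(V)\to\Th(\AMAX(V))$ through Theorem \ref{universal-prop-haagerup}, the algebra version of the universal property of $\AMAX$, and the universal property of $\mathcal{T}$ to build completely contractive algebra homomorphisms in both directions and verify that they invert one another on elementary tensors; but the adjoint-uniqueness argument renders this computation unnecessary.
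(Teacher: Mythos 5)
Your argument is exactly the paper's: it identifies $\Th\circ\AMAX$ and $\AMAX\circ\mathcal{T}$ as composites of left adjoints to the two composite forgetful functors, notes the strict equality $F_{\MBanC}^{\BanC}\circ F_{\MBanAlgC}^{\MBanC}=F_{\BanAlgC}^{\BanC}\circ F_{\MBanAlgC}^{\BanAlgC}$, and concludes by uniqueness of left adjoints. The proposal is correct and follows the same route, with the explicit construction via $\iota_{V}$ and $\epsilon_{\AMAX(V)}$ rightly noted as an optional alternative rather than a needed step.
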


Composing left adjoints $\MBanSp$ and $\Th$ creates a new left adjoint, $\MBanAlg:=\Th\circ\MBanSp$, with the following universal property.

\begin{thm}[Universal property of $\MBanAlg$]\label{mbanalg-prop}
Let
\[
F_{\MBanAlgC}^{\AWSetC}:\MBanAlgC\to\AWSetC
\]
be the forgetful functor stripping all algebraic structure.  For an array-weighted set $X$ and a matricial Banach algebra $\alg{B}$, consider a completely contractive map $\phi:X\to F_{\MBanAlgC}^{\AWSetC}(\alg{B})$.  Then, there is a unique completely contractive algebra homomorphism $\hat{\phi}:\MBanAlg(X)\to\alg{B}$ such that $F_{\MBanAlgC}^{\AWSetC}\left(\hat{\phi}\right)\circ\epsilon_{\MBanSp(X)}\circ\eta_{X}=\phi$.
\end{thm}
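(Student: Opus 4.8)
The plan is to exploit that $\MBanAlg=\Th\circ\MBanSp$ is a composite of the two left adjoints furnished by Theorem~\ref{scaled-free-MBan} and Theorem~\ref{universal-prop-haagerup}, so the universal property should follow by stitching the two adjunctions together. The only thing that needs attention is bookkeeping with the forgetful functors and checking that complete contractivity survives each stage.

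First I would record that stripping all algebraic structure factors as
\[
F_{\MBanAlgC}^{\AWSetC}=F_{\MBanB}^{\AWSetB}\circ F_{\MBanAlgC}^{\MBanC},
\]
where $F_{\MBanB}^{\AWSetB}$ of Example~\ref{motivating} is applied to the matricial Banach space $W:=F_{\MBanAlgC}^{\MBanC}(\alg{B})$ and to completely contractive (hence completely bounded) linear maps; this is immediate from the definitions of the two functors. Consequently the given $\phi:X\to F_{\MBanAlgC}^{\AWSetC}(\alg{B})$ is exactly a completely contractive map $X\to F_{\MBanB}^{\AWSetB}(W)$. Since $\cbnd(\phi)\leq1$, Theorem~\ref{scaled-free-MBan} yields a unique completely bounded linear map $\psi:\MBanSp(X)\to W$ with $F_{\MBanB}^{\AWSetB}(\psi)\circ\eta_{X}=\phi$ and $\|\psi\|_{\CB(\MBanSp(X),W)}\leq\cbnd(\phi)\leq1$, so $\psi$ is completely contractive as a map $\MBanSp(X)\to F_{\MBanAlgC}^{\MBanC}(\alg{B})$.

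Next I would apply Theorem~\ref{universal-prop-haagerup} to the matricial Banach space $\MBanSp(X)$, the matricial Banach algebra $\alg{B}$, and the completely contractive linear map $\psi$: this produces a unique completely contractive algebra homomorphism $\hat\phi:\Th(\MBanSp(X))\to\alg{B}$ with $F_{\MBanAlgC}^{\MBanC}(\hat\phi)\circ\epsilon_{\MBanSp(X)}=\psi$. As $\Th(\MBanSp(X))=\MBanAlg(X)$, this $\hat\phi$ is the sought homomorphism, and functoriality together with the factorization above gives
\[
F_{\MBanAlgC}^{\AWSetC}(\hat\phi)\circ\epsilon_{\MBanSp(X)}\circ\eta_{X}
=F_{\MBanB}^{\AWSetB}\!\left(F_{\MBanAlgC}^{\MBanC}(\hat\phi)\circ\epsilon_{\MBanSp(X)}\right)\circ\eta_{X}
=F_{\MBanB}^{\AWSetB}(\psi)\circ\eta_{X}
=\phi .
\]

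For uniqueness I would reverse the two universal properties. If $\hat\phi'$ is another completely contractive algebra homomorphism $\MBanAlg(X)\to\alg{B}$ with $F_{\MBanAlgC}^{\AWSetC}(\hat\phi')\circ\epsilon_{\MBanSp(X)}\circ\eta_{X}=\phi$, then $F_{\MBanAlgC}^{\MBanC}(\hat\phi')\circ\epsilon_{\MBanSp(X)}$ is a completely bounded linear map $\MBanSp(X)\to W$ whose composite with $\eta_{X}$ under $F_{\MBanB}^{\AWSetB}$ is $\phi$, so the uniqueness clause of Theorem~\ref{scaled-free-MBan} forces it to equal $\psi$; then the uniqueness clause of Theorem~\ref{universal-prop-haagerup} forces $\hat\phi'=\hat\phi$. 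I do not anticipate a genuine obstacle here: the argument is essentially formal, and the one delicate point --- transporting complete contractivity through $\MBanSp$ and then through $\Th$ --- is exactly what the ``Moreover'' inequality in Theorem~\ref{scaled-free-MBan} and the complete-contractivity output of Theorem~\ref{universal-prop-haagerup} guarantee.
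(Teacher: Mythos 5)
Your proposal is correct and follows essentially the same route as the paper: factor the forgetful functor through $F_{\MBanAlgC}^{\MBanC}$, lift $\phi$ via Theorem~\ref{scaled-free-MBan} (using its norm estimate to retain complete contractivity), then apply Theorem~\ref{universal-prop-haagerup}, with uniqueness obtained by reversing the two universal properties. The only cosmetic difference is that you work with the completely bounded forgetful functor and transport contractivity via the ``Moreover'' inequality, whereas the paper invokes the completely contractive variant directly.
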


\begin{proof}

Letting $F_{\MBanC}^{\AWSetC}$ be the completely contractive version of the forgetful functor stripping all vector space structure, notice that
\[
F_{\MBanC}^{\AWSetC}\circ F_{\MBanAlgC}^{\MBanC}=F_{\MBanAlgC}^{\AWSetC}.
\]
Thus, $\phi:X\to F_{\MBanC}^{\AWSetC}\left(F_{\MBanAlgC}^{\MBanC}(\alg{B})\right)$, so there is a unique completely contractive linear map $\tilde{\phi}:\MBanSp(X)\to F_{\MBanAlgC}^{\MBanC}(\alg{B})$ such that $F_{\MBanC}^{\AWSetC}\left(\hat{\phi}\right)\circ\eta_{X}=\phi$ by Theorem \ref{scaled-free-MBan}.  By Theorem \ref{universal-prop-haagerup}, there is a unique completely contractive algebra homomorphism $\hat{\phi}:\Th(\MBanSp(X))\to\alg{B}$ such that $F_{\MBanAlgC}^{\MBanC}\left(\hat{\phi}\right)\circ\epsilon_{\MBanSp(X)}=\tilde{\phi}$.

Consequently, $\hat{\phi}:\MBanAlg(X)\to\alg{B}$ and
\[
F_{\MBanAlgC}^{\AWSetC}\left(\hat{\phi}\right)\circ\epsilon_{\MBanSp(X)}\circ\eta_{X}
=\tilde{\phi}\circ\eta_{X}
=\phi.
\]
Uniqueness follows from the universal properties of $\Th$ and $\MBanSp$.

\end{proof}

Combining Theorem \ref{singletons-mbansp} with Corollary \ref{th-amax} gives the following characterization for singleton array-weighted sets.

\begin{cor}[Characterization of singletons, $\MBanAlg$]
Let $X=\{x\}$ be a singleton array-weighted set.  Then,
\[
\MBanAlg(X)\cong_{\MBanAlgC}\left\{\begin{array}{cc}
\AMAX\left(\ell^{1}\right),	&	\brn(X)>0,\\
\{0\},	&	\brn(X)=0.\\
\end{array}\right.
\]
\end{cor}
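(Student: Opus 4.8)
The plan is to unfold the definition $\MBanAlg=\Th\circ\MBanSp$, substitute the singleton computation for $\MBanSp$ from Theorem \ref{singletons-mbansp}, and then push the $\AMAX$ past $\Th$ using Corollary \ref{th-amax}. Since $\Th$ is a functor (being a left adjoint by Theorem \ref{universal-prop-haagerup}) and $\AMAX$ is a left adjoint functor, each of these steps carries an isomorphism in the appropriate category to an isomorphism, so the whole argument is a chain of functor applications requiring no new estimates.

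First I would dispose of the case $\brn(X)=0$. Here Theorem \ref{singletons-mbansp} gives $\MBanSp(X)\cong_{\MBanC}\{0\}$, so $\MBanAlg(X)=\Th(\MBanSp(X))\cong_{\MBanAlgC}\Th(\{0\})$. Every Haagerup tensor power of the zero space is the zero space, and the matricial $\ell^{1}$-direct sum of zero spaces is again the zero space, so $\Th(\{0\})=\{0\}$, settling this case.

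For $\brn(X)>0$, Theorem \ref{singletons-mbansp} gives $\MBanSp(X)\cong_{\MBanC}\AMAX(\mathbb{C})$. Applying $\Th$ and then Corollary \ref{th-amax} with $V=\mathbb{C}$,
\[
\MBanAlg(X)=\Th(\MBanSp(X))\cong_{\MBanAlgC}\Th(\AMAX(\mathbb{C}))\cong_{\MBanAlgC}\AMAX(\mathcal{T}(\mathbb{C})).
\]
It then remains to identify $\mathcal{T}(\mathbb{C})$ as a Banach algebra. Because $\mathbb{C}^{\hat{\otimes}n}\cong_{\BanC}\mathbb{C}$ for every $n\in\mathbb{N}$, the Banach tensor algebra $\mathcal{T}(\mathbb{C})=\coprod_{n\in\mathbb{N}}^{\BanC}\mathbb{C}^{\hat{\otimes}n}$ is isometrically the $\ell^{1}$-direct sum of countably many copies of $\mathbb{C}$, and under this identification the tensor multiplication becomes the convolution product determined by the canonical isomorphisms $\mathbb{C}^{\hat{\otimes}m}\hat{\otimes}\mathbb{C}^{\hat{\otimes}n}\to\mathbb{C}^{\hat{\otimes}(m+n)}$; that is, $\mathcal{T}(\mathbb{C})\cong_{\BanAlgC}\ell^{1}$. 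Feeding this $\BanAlgC$-isomorphism through the functor $\AMAX$ yields $\MBanAlg(X)\cong_{\MBanAlgC}\AMAX(\ell^{1})$, which is the claimed value.

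The only point needing care — and the one I would write out explicitly — is the categorical bookkeeping: one must apply $\Th$ to the $\MBanC$-isomorphism of Theorem \ref{singletons-mbansp}, invoke Corollary \ref{th-amax} as the natural isomorphism $\Th\circ\AMAX\cong\AMAX\circ\mathcal{T}$, apply $\AMAX$ to the $\BanAlgC$-isomorphism $\mathcal{T}(\mathbb{C})\cong\ell^{1}$, and then compose these $\MBanAlgC$-isomorphisms (together with the trivial $\brn(X)=0$ case) to obtain the dichotomy. There is no analytic obstacle; the entire difficulty, such as it is, lies in chasing the functors through the correct categories.
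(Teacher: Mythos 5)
Your proposal is correct and follows exactly the paper's route: split on $\brn(X)$, use Theorem \ref{singletons-mbansp} to identify $\MBanSp(X)$, push $\Th$ through via Corollary \ref{th-amax}, and identify $\mathcal{T}(\mathbb{C})\cong_{\BanAlgC}\ell^{1}$ with the convolution product. The only difference is that you spell out the functorial bookkeeping that the paper leaves implicit, which is harmless.
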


\begin{proof}

In the case $\brn(X)=0$, then direct calculation from Theorem \ref{singletons-mbansp} gives
\[
\MBanAlg(X)
\cong_{\MBanAlgC}\Th(\{0\})
\cong_{\MBanAlgC}\{0\}.
\]
When $\brn(X)>0$, then Theorem \ref{singletons-mbansp} and Corollary \ref{th-amax} give
\[\begin{array}{rcl}
\MBanAlg(X)
&	\cong_{\MBanAlgC}	&	\Th(\AMAX(\mathbb{C}))\\
&	\cong_{\MBanAlgC}	&	\AMAX(\mathcal{T}(\mathbb{C}))\\
&	\cong_{\MBanAlgC}	&	\AMAX(\ell^{1}),\\
\end{array}\]
where $\ell^{1}$ has the convolution product.

\end{proof}

Using Corollary \ref{MA-MBanSP}, the matricial Banach algebra of an array-weighted set with the maximum structure can be computed also.  Note that the coproduct of Banach algebras is the free product from \cite[Definition 1.4]{gronbaek1992}

\begin{cor}
Given a weighted set $S$,
\[
\MBanAlg(\MA(S))\cong_{\MBanAlgC}\AMAX\left({\coprod_{w_{S}(s)\neq 0}}^{\BanAlgC}\ell^{1}\right).
\]
\end{cor}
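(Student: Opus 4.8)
The plan is to unwind the definition $\MBanAlg=\Th\circ\MBanSp$ and chain together the functorial identities already established, collapsing $\MBanAlg(\MA(S))$ onto $\AMAX(\BanAlg(S))$, and then to recognize $\BanAlg(S)$ as the stated free product of copies of $\ell^{1}$.

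First I would observe that $\MBanAlg(\MA(S))=\Th(\MBanSp(\MA(S)))$ by definition, so Corollary~\ref{MA-MBanSP} gives $\MBanSp(\MA(S))\cong_{\MBanC}\AMAX(\BanSp(S))$, and applying the functor $\Th$ yields $\MBanAlg(\MA(S))\cong_{\MBanAlgC}\Th(\AMAX(\BanSp(S)))$. Corollary~\ref{th-amax}, applied with $V:=\BanSp(S)$, then gives $\Th(\AMAX(\BanSp(S)))\cong_{\MBanAlgC}\AMAX(\mathcal{T}(\BanSp(S)))$, and since $\BanAlg=\mathcal{T}\circ\BanSp$ this says $\MBanAlg(\MA(S))\cong_{\MBanAlgC}\AMAX(\BanAlg(S))$.

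The next step is to identify $\BanAlg(S)$. By Proposition~\ref{decomp-wset}, $S\cong_{\WSetC}\coprod_{s\in S}^{\WSetC}W_{w_{S}(s)}(\{s\})$, and $\BanAlg$ is a left adjoint, hence preserves coproducts, so $\BanAlg(S)\cong_{\BanAlgC}\coprod_{s\in S}^{\BanAlgC}\BanAlg\!\left(W_{w_{S}(s)}(\{s\})\right)$. For a single point of weight $0$ the functor $\BanSp$ returns the zero space, so the corresponding summand is the zero algebra, which is initial in $\BanAlgC$ and may be deleted from the coproduct. For a single point of weight $\lambda>0$ the map $\delta_{s}\mapsto\lambda$ is an isometric isomorphism $\BanSp(W_{\lambda}(\{s\}))\to\mathbb{C}$, and since $\mathbb{C}^{\hat{\otimes}n}\cong_{\BanC}\mathbb{C}$ isometrically for every $n$, one gets $\mathcal{T}(\mathbb{C})\cong_{\BanAlgC}\ell^{1}$ with the convolution product --- exactly the identification recorded in the proof of the characterization of singletons for $\MBanAlg$. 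Hence each nonzero summand is isomorphic to $\ell^{1}$, giving $\BanAlg(S)\cong_{\BanAlgC}\coprod_{w_{S}(s)\neq0}^{\BanAlgC}\ell^{1}$.

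Finally I would apply the functor $\AMAX$ to this isomorphism of Banach algebras, obtaining $\AMAX(\BanAlg(S))\cong_{\MBanAlgC}\AMAX\!\left(\coprod_{w_{S}(s)\neq0}^{\BanAlgC}\ell^{1}\right)$, which together with the first step completes the argument. Essentially every step is bookkeeping with left adjoints and uniqueness of adjoints; the only slightly substantive points are the identification $\mathcal{T}(\mathbb{C})\cong_{\BanAlgC}\ell^{1}$ with convolution and the observation that a zero summand drops out of a coproduct in $\BanAlgC$, and these are the places where I would expect to have to say a word rather than merely cite --- though both are routine, the former having already been used for the singleton computation.
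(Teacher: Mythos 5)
Your proposal is correct and follows essentially the same route as the paper: the chain $\MBanAlg(\MA(S))\cong\Th(\AMAX(\BanSp(S)))\cong\AMAX(\mathcal{T}(\BanSp(S)))$, the decomposition of $S$ via Proposition \ref{decomp-wset}, removal of the zero summands, and the identification $\mathcal{T}(\mathbb{C})\cong_{\BanAlgC}\ell^{1}$ with convolution. The only cosmetic difference is that you pull the coproduct through $\BanAlg=\mathcal{T}\circ\BanSp$ in one pass, while the paper pulls it through $\BanSp$ and then $\mathcal{T}$ separately; the content is identical.
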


\begin{proof}

From definition,
\[
\MBanAlg(\MA(S))
=\Th(\MBanSp(\MA(S))),
\]
so by Corollary \ref{MA-MBanSP}
\[
\MBanAlg(\MA(S))
\cong_{\MBanAlgC}\Th(\AMAX(\BanSp(S))).
\]
By Corollary \ref{th-amax},
\[
\MBanAlg(\MA(S))
\cong_{\MBanAlgC}\AMAX(\mathcal{T}(\BanSp(S))).
\]
By Proposition \ref{decomp-wset},
\[
\MBanAlg(\MA(S))
\cong_{\MBanAlgC}\AMAX\left(\mathcal{T}\left(\BanSp\left({\coprod_{s\in S}}^{\WSetC}W_{w_{S}(s)}(\{s\})\right)\right)\right).
\]
As $\BanSp$ is a left adjoint, the coproduct can be brought out.
\[
\MBanAlg(\MA(S))
\cong_{\MBanAlgC}\AMAX\left(\mathcal{T}\left({\coprod_{s\in S}}^{\BanC}\BanSp\left(W_{w_{S}(s)}(\{s\})\right)\right)\right)
\]
From direct calculation, $\BanSp\left(W_{w_{S}(s)}(\{s\})\right)\cong_{\BanC}\mathbb{C}$ if $w_{S}(s)\neq0$, and the zero space otherwise.  As such, the zero cofactors can be ignored.
\[
\MBanAlg(\MA(S))
\cong_{\MBanAlgC}\AMAX\left(\mathcal{T}\left({\coprod_{w_{S}(s)\neq 0}}^{\BanC}\mathbb{C}\right)\right)
\]
As $\mathcal{T}$ is a left adjoint, the coproduct can be brought out once again.
\[
\MBanAlg(\MA(S))
\cong_{\MBanAlgC}\AMAX\left({\coprod_{w_{S}(s)\neq 0}}^{\BanAlgC}\mathcal{T}\left(\mathbb{C}\right)\right)
\]
Finally, direct computation shows that $\mathcal{T}(\mathbb{C})\cong_{\BanAlgC}\ell^{1}$ equipped with the convolution product.
\[
\MBanAlg(\MA(S))
\cong_{\MBanAlgC}\AMAX\left({\coprod_{w_{S}(s)\neq 0}}^{\BanAlgC}\ell^{1}\right)
\]

\end{proof}

Lastly, observe that all the statements for $\Th$ have been made with completely contractive maps, rather than completely bounded maps.  While one would like to construct a tensor algebra compatible with completely bounded maps, this cannot be done.  The reason is that the multiplication of generators would become unbounded, as demonstrated in the proposition below.

\begin{prop}
Let $F_{\MBanAlgB}^{\AWSetB}$ be the forgetful functor removing all algebraic structure.  An array weighted set $X$ has a reflection along $F_{\MBanAlgB}^{\AWSetB}$ if and only if the only completely bounded map from $X$ to $\MIN(\mathbb{C})$ is the zero map.  In this case, the reflection is the zero algebra equipped with the constant map from $X$.
\end{prop}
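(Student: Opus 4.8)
The plan is to prove both implications separately, treating the contrapositive of the ``only if'' direction as the substantive part. The guiding idea is that a reflection would have to carry the generator $\eta(x_{0})$ into $\MIN(\mathbb{C})$ with arbitrarily large modulus, while inside a matricial Banach \emph{algebra} the powers of $\eta(x_{0})$ can only grow geometrically at the rate $\|\eta(x_{0})\|_{\mathcal{A},1,1}$; a completely bounded homomorphism cannot reconcile these two growth rates.

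First I would record the easy equivalence: ``the only completely bounded map $X\to\MIN(\mathbb{C})$ is the zero map'' is the same as ``for every matricial Banach algebra $\mathcal{B}$, the only completely bounded map $X\to F_{\MBanAlgB}^{\AWSetB}(\mathcal{B})$ is the zero map.'' The backward direction only needs that $\MIN(\mathbb{C})$ is itself a matricial Banach algebra, which holds because its matrix norm on $\mathbb{M}_{m,n}$ is the operator norm, and the operator norm is submultiplicative under matrix multiplication. For the forward direction, if $\psi:X\to F_{\MBanAlgB}^{\AWSetB}(\mathcal{B})$ were completely bounded with $\psi(x)\neq 0$, pick by Hahn--Banach some $\ell\in\mathcal{B}^{*}$ with $\|\ell\|\leq 1$ and $\ell(\psi(x))\neq 0$, lift $\ell$ through Theorem \ref{MIN-prop} to a completely contractive linear map $\hat{\ell}:\mathcal{B}\to\MIN(\mathbb{C})$, and note that $\hat{\ell}\circ\psi:X\to\MIN(\mathbb{C})$ is completely bounded (composition of completely bounded maps) and nonzero. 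Granting this equivalence, the ``if'' direction of the proposition is immediate: when every completely bounded map out of $X$ into a matricial Banach algebra is zero, the zero algebra $\{0\}$ with the (forced) constant map $\eta_{0}:X\to F_{\MBanAlgB}^{\AWSetB}(\{0\})$ is a reflection, since for any $\mathcal{B}$ and completely bounded $\psi:X\to F_{\MBanAlgB}^{\AWSetB}(\mathcal{B})$ one has $\psi=0$, and the unique algebra homomorphism $\{0\}\to\mathcal{B}$ is the unique completely bounded factoring morphism.

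For the converse I would argue by contraposition. Assume $\phi:X\to\MIN(\mathbb{C})$ is completely bounded with $\phi(x_{0})=c\neq 0$, and suppose toward a contradiction that $(\mathcal{A},\eta)$ is a reflection of $X$ along $F_{\MBanAlgB}^{\AWSetB}$. Since $\eta$ is completely bounded, $\rho:=\|\eta(x_{0})\|_{\mathcal{A},1,1}\leq\cbnd(\eta)\,w_{X,1,1}(x_{0})<\infty$, and the matrix-normed algebra axiom applied to $1\times 1$ matrices gives $\|\eta(x_{0})^{k}\|_{\mathcal{A},1,1}\leq\rho^{k}$ for all $k\in\mathbb{N}$. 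Choose any $s>0$ with $s|c|>\rho$ (possible since $|c|>0$). Then $s\phi$ is a completely bounded map $X\to F_{\MBanAlgB}^{\AWSetB}(\MIN(\mathbb{C}))$, so the universal property supplies a completely bounded algebra homomorphism $h:\mathcal{A}\to\MIN(\mathbb{C})$ with $F_{\MBanAlgB}^{\AWSetB}(h)\circ\eta=s\phi$, hence $h(\eta(x_{0}))=sc$. Multiplicativity of $h$ gives $h(\eta(x_{0})^{k})=(sc)^{k}$, so for every $k$,
\[
(s|c|)^{k}=\|h(\eta(x_{0})^{k})\|_{\MIN(\mathbb{C}),1,1}\leq\cbnd(h)\,\|\eta(x_{0})^{k}\|_{\mathcal{A},1,1}\leq\cbnd(h)\,\rho^{k},
\]
i.e.\ $(s|c|/\rho)^{k}\leq\cbnd(h)$ for all $k$. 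Since $s|c|/\rho>1$, the left side diverges while $\cbnd(h)<\infty$ is fixed, a contradiction. (The degenerate case $\rho=0$ forces $\eta(x_{0})=0$, and then applying the universal property to $\phi$ itself gives a homomorphism sending $\eta(x_{0})$ to $c\neq 0$, also a contradiction.) Combining the two directions, and noting that in the ``if'' case the reflection produced is exactly the zero algebra with the constant map, finishes the proof.

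The routine parts are the Hahn--Banach reduction to $\MIN(\mathbb{C})$, the verification that $\MIN(\mathbb{C})\in\MBanAlgB$, and the bookkeeping that the zero algebra is a genuine reflection. The one real idea, and where I expect the crux to lie, is the growth-rate comparison in the contrapositive: the reflection property forces completely bounded homomorphisms stretching $\eta(x_{0})$ to scalars of modulus exceeding $\|\eta(x_{0})\|_{\mathcal{A},1,1}$, and submultiplicativity then makes the complete bound constants blow up along powers — this is precisely the ``multiplication of generators becomes unbounded'' phenomenon cited before the statement.
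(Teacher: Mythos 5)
Your proposal is correct, and its core argument --- scaling a nonzero completely bounded map $\phi:X\to\MIN(\mathbb{C})$ so that the induced homomorphism must stretch $\eta(x_{0})$ beyond $\|\eta(x_{0})\|_{\mathcal{A},1,1}$, then letting submultiplicativity of the matrix-norm on powers force $\cbnd(h)$ to blow up --- is exactly the ``multiplication of generators becomes unbounded'' mechanism behind the proof the paper invokes (the Banach-algebra analogue, Proposition 3.2.1 of the cited work), with the Hahn--Banach/$\MIN$ reduction and the zero-algebra verification handled the same way. No gaps; this matches the paper's intended argument.
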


The proof of this proposition is nearly identical to \cite[Proposition 3.2.1]{grilliette1}.  Moreover, this proposition and Theorem \ref{scaled-free-MBan} give the following nonexistence result, analogous to \cite[Corollary 3.2.5]{grilliette1}

\begin{cor}
There cannot exist a functor that is left adjoint to the forgetful functor from $\MBanAlgB$ to $\MBanB$, which strips all multiplicative structure.
\end{cor}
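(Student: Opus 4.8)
The plan is to argue by contradiction, combining the preceding proposition with the composition of left adjoints. Suppose, toward a contradiction, that the forgetful functor $U\colon\MBanAlgB\to\MBanB$ stripping multiplicative structure admitted a left adjoint $G\colon\MBanB\to\MBanAlgB$. Since $\MBanSp\colon\AWSetB\to\MBanB$ is a left adjoint to $F_{\MBanB}^{\AWSetB}$ (as recorded after Theorem \ref{scaled-free-MBan}), the composite $G\circ\MBanSp$ would be a left adjoint to the composite right adjoint $F_{\MBanB}^{\AWSetB}\circ U$.

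First I would observe the identity of forgetful functors $F_{\MBanB}^{\AWSetB}\circ U=F_{\MBanAlgB}^{\AWSetB}$: both strip a matricial Banach algebra down to its underlying array-weighted set, removing the multiplication and the linear structure while keeping the matrix-norm as an array-weight. Hence $G\circ\MBanSp$ is a left adjoint to $F_{\MBanAlgB}^{\AWSetB}$, so that \emph{every} array-weighted set $X$ would have a reflection along $F_{\MBanAlgB}^{\AWSetB}$, namely $G(\MBanSp(X))$ together with the unit of this adjunction at $X$.

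Next I would apply the preceding proposition, which says that an array-weighted set $X$ has such a reflection precisely when the only completely bounded map from $X$ to $\MIN(\mathbb{C})$ is the zero map. Thus the hypothetical $G$ would force this vanishing property on all array-weighted sets at once. To finish, I would exhibit one array-weighted set for which it fails: take $X:=F_{\MBanB}^{\AWSetB}(\MIN(\mathbb{C}))$, for which the underlying set-map of $\mathrm{id}_{\MIN(\mathbb{C})}$ is a completely isometric, hence completely bounded, nonzero map $X\to\MIN(\mathbb{C})$; equivalently one may take $\MA(S)$ for a one-point weighted set $S$ of weight $1$ and lift the evident bounded map $S\to\MIN(\mathbb{C})$ via Theorem \ref{MA-prop}. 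This contradicts the conclusion of the preceding proposition, so no left adjoint $G$ exists.

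The argument is a short formal diagram chase, so I expect no genuine obstacle; the only points needing (routine) care are verifying the equality $F_{\MBanB}^{\AWSetB}\circ U=F_{\MBanAlgB}^{\AWSetB}$ of forgetful functors and confirming that $\MBanSp\dashv F_{\MBanB}^{\AWSetB}$ is a bona fide adjunction — both of which are already in place in the text.
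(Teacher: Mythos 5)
Your proposal is correct and matches the paper's intended argument: the paper derives this corollary precisely by combining the preceding proposition with Theorem \ref{scaled-free-MBan}, i.e., composing a hypothetical left adjoint with $\MBanSp$ to obtain a left adjoint to $F_{\MBanAlgB}^{\AWSetB}$ and then contradicting the reflection criterion, exactly as you do. Your added details (the identity $F_{\MBanB}^{\AWSetB}\circ U=F_{\MBanAlgB}^{\AWSetB}$ and the explicit array-weighted set admitting a nonzero completely bounded map to $\MIN(\mathbb{C})$) are just the routine verifications the paper leaves implicit.
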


\subsection{Free Product Matricial Banach Algebra}\label{free-product}

This section constructs the coproduct of matricial Banach algebras, the free product matricial Banach algebra.  This is directly parallel to the free product of Banach algebras \cite[Definition 1.4]{gronbaek1992}, operator algebras \cite[Theorem 4.1]{blecher1991-2}, and C*-algebras \cite{avitzour1982}.  However, with the scaled-free matricial Banach algebra from Theorem \ref{mbanalg-prop}, construction of this object will be much more algebraic like \cite[\S3.2]{grilliette2}.

\begin{defn}[Free product matricial Banach algebra]
Let $\left(\alg{A}_{\lambda}\right)_{\lambda\in\Lambda}$ be matricial Banach algebras.  Define
\[
G_{\lambda}:=F_{\MBanAlgC}^{\AWSetC}\left(\alg{A}_{\lambda}\right),
\]
the underlying array-weighted set of each $\alg{A}_{\lambda}$, and $G:={\coprod_{\lambda\in\Lambda}}^{\AWSetC}G_{\lambda}$ their coproduct array-weighted set with inclusion maps $\rho_{\lambda}:G_{\lambda}\to G$.  Define $\alg{B}:=\MBanAlg(G)$, the scaled-free matricial Banach algebra of $G$ with map of generators $\upsilon_{G}:G\to F_{\MBanAlgC}^{\AWSetC}\left(\alg{B}\right)$ by $\upsilon_{G}:=\epsilon_{\MBanSp(G)}\circ\eta_{G}$.  Let $J$ be the closed ideal in $\alg{B}$ generated by
\[
\bigcup_{\lambda\in\Lambda}\left\{\begin{array}{l}
\upsilon_{G}\left(\rho_{\lambda}(a+b)\right)-\left(\upsilon_{G}\left(\rho_{\lambda}(a)\right)+\upsilon_{G}\left(\rho_{\lambda}(b)\right)\right),\\
\upsilon_{G}\left(\rho_{\lambda}(ab)\right)-\upsilon_{G}\left(\rho_{\lambda}(a)\right)\upsilon_{G}\left(\rho_{\lambda}(b)\right),\\
\upsilon_{G}\left(\rho_{\lambda}(\mu a)\right)-\mu\upsilon_{G}\left(\rho_{\lambda}(a)\right)\\
\end{array}
:a,b\in\alg{A}_{\lambda},\mu\in\mathbb{C}\right\}
\]
and $\alg{A}:=\alg{B}/J$ the quotient matricial Banach algebra with quotient map $q:\alg{B}\to\alg{A}$.  Defining $\psi_{\lambda}:=q\circ\upsilon_{G}\circ\rho_{\lambda}$ for all $\lambda\in\Lambda$, each $\psi_{\lambda}$ is a completely contractive algebra homomorphism by construction of $G$ and $J$.
\end{defn}

So constructed, $\alg{A}$ has the following universal property.

\begin{thm}[Universal property of the coproduct, $\MBanAlgC$]
For a matricial Banach algebra $\alg{C}$, let $\phi_{\lambda}:\alg{A}_{\lambda}\to\alg{C}$ be a completely contractive algebra homomorphism for all $\lambda\in\Lambda$.  Then, there is a unique completely contractive homomorphism $\phi:\alg{A}\to\alg{C}$ such that $\phi\circ\psi_{\lambda}=\phi_{\lambda}$ for all $\lambda\in\Lambda$.
\end{thm}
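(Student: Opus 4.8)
The plan is to obtain $\phi$ by chasing the defining diagram of $\alg{A}$ through the three universal properties used in its construction: the coproduct in $\AWSetC$ (Theorem~\ref{awsetb-coproduct}, in its completely contractive form), the scaled-free matricial Banach algebra $\MBanAlg$ (Theorem~\ref{mbanalg-prop}), and the quotient of a matricial Banach algebra by a closed ideal. Since every functor in the chain $\MBanAlg=\Th\circ\MBanSp$ is a left adjoint and the quotient is a coequalizer, the map $\phi$ should drop out formally; the only genuine arithmetic is to check that the ideal $J$ is annihilated.

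First I would strip the algebra structure from the $\phi_\lambda$: each is in particular a completely contractive map $F_{\MBanAlgC}^{\AWSetC}(\phi_\lambda)\colon G_\lambda\to F_{\MBanAlgC}^{\AWSetC}(\alg{C})$, and $\sup_\lambda\cbnd\bigl(F_{\MBanAlgC}^{\AWSetC}(\phi_\lambda)\bigr)\le 1$, so Theorem~\ref{awsetb-coproduct} yields a unique completely contractive $g\colon G\to F_{\MBanAlgC}^{\AWSetC}(\alg{C})$ with $g\circ\rho_\lambda=F_{\MBanAlgC}^{\AWSetC}(\phi_\lambda)$ for all $\lambda$. Applying Theorem~\ref{mbanalg-prop} to $g$ gives a unique completely contractive algebra homomorphism $\tilde\phi\colon\alg{B}\to\alg{C}$ with $F_{\MBanAlgC}^{\AWSetC}(\tilde\phi)\circ\upsilon_G=g$. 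Then I would show $J\subseteq\ker(\tilde\phi)$: because $\tilde\phi$ is a continuous homomorphism, $\ker(\tilde\phi)$ is a closed ideal, so it suffices to kill the listed generators, and e.g.\ $\tilde\phi\bigl(\upsilon_G(\rho_\lambda(ab))\bigr)=g(\rho_\lambda(ab))=\phi_\lambda(ab)=\phi_\lambda(a)\phi_\lambda(b)=\tilde\phi\bigl(\upsilon_G(\rho_\lambda(a))\bigr)\tilde\phi\bigl(\upsilon_G(\rho_\lambda(b))\bigr)$, using that $\phi_\lambda$ is multiplicative, with the additive and scalar generators handled the same way using linearity of $\phi_\lambda$. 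By the universal property of the quotient matricial Banach algebra (the matricial analogue of the Banach-algebra quotient), $\tilde\phi$ then factors uniquely as $\tilde\phi=\phi\circ q$ for a completely contractive homomorphism $\phi\colon\alg{A}\to\alg{C}$.

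Finally I would verify the factorization and uniqueness. For the factorization, $\phi\circ\psi_\lambda=\phi\circ q\circ\upsilon_G\circ\rho_\lambda=\tilde\phi\circ\upsilon_G\circ\rho_\lambda=g\circ\rho_\lambda=F_{\MBanAlgC}^{\AWSetC}(\phi_\lambda)$, which agrees with $\phi_\lambda$ as a function and hence as a homomorphism. For uniqueness, given another such $\phi'$, the map $\phi'\circ q\circ\upsilon_G$ satisfies $(\phi'\circ q\circ\upsilon_G)\circ\rho_\lambda=\phi_\lambda$ for all $\lambda$, hence equals $g$ by the uniqueness clause of Theorem~\ref{awsetb-coproduct}; then $\phi'\circ q=\tilde\phi$ by the uniqueness clause of Theorem~\ref{mbanalg-prop}; then $\phi'=\phi$ by uniqueness of the quotient factorization. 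The only steps that are not pure diagram-chasing are the computation $J\subseteq\ker(\tilde\phi)$, which is routine once the definition of $J$ is unwound, and — if it has not already been recorded — establishing that $\alg{B}/J$ is a matricial Banach algebra whose quotient map enjoys the expected universal property in $\MBanAlgC$; I expect the latter to be the only point requiring genuine (though standard) work.
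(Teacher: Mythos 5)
Your proposal follows essentially the same route as the paper: strip to array-weighted sets and use Theorem~\ref{awsetb-coproduct}, lift via Theorem~\ref{mbanalg-prop}, check the generators of $J$ are annihilated so the map factors through the quotient, and deduce uniqueness from the same chain of universal properties. The only difference is that you spell out the ``routine calculations'' for $J\subseteq\ker(\tilde\phi)$ and flag the quotient matricial Banach algebra's universal property, both of which the paper leaves implicit.
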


\begin{proof}

By Theorem \ref{awsetb-coproduct}, there is a unique completely contractive function $\varphi:G\to F_{\MBanAlgC}^{\AWSetC}\left(\alg{C}\right)$ such that $\varphi\circ\rho_{\lambda}=F_{\MBanAlgC}^{\AWSetC}\left(\phi_{\lambda}\right)$ for all $\lambda\in\Lambda$.  By Theorem \ref{mbanalg-prop}, there is a unique completely contractive homomorphism $\varpi:\alg{B}\to\alg{C}$ such that $F_{\MBanAlgC}^{\AWSetC}\left(\varpi\right)\circ\upsilon_{G}=\varphi$.  Routine calculations show that $J\subseteq\ker(\varpi)$, so there is a unique completely contractive homomorphism $\phi:\alg{A}\to\alg{C}$ such that $\phi\circ q=\varpi$.  Thus,
\[
\phi\circ\psi_{\lambda}
=\phi\circ q\circ\upsilon_{G}\circ\rho_{\lambda}
=\varpi\circ\upsilon_{G}\circ\rho_{\lambda}
=\varphi\circ\rho_{\lambda}
=\phi_{\lambda}
\]
as desired.  Uniqueness follows from the universal properties of $G$, $\alg{B}$, and the quotient.

\end{proof}

Moreover, use of the universal property shows that the homomorphisms $\psi_{\lambda}$ are completely isometric.

\bibliographystyle{amsplain}

\end{document}